\documentclass[11pt,a4paper]{article} 

\usepackage[usenames,dvipsnames]{color}
\usepackage{tikz}
\usetikzlibrary{shapes}
\usepackage{lscape}

\usepackage[utf8]{inputenc}
\usepackage[T1]{fontenc}

\usepackage[english]{babel}

\usepackage{a4wide}

\usepackage{amssymb}
\usepackage{amsthm,amsmath}
\usepackage{amsfonts}
\usepackage{latexsym}
\usepackage{yhmath}

\usepackage{url} 
\usepackage[all]{xy} 
\usepackage{enumerate}
\usepackage{enumitem}

\usepackage[figuresright]{rotating}

\usepackage{graphicx}

\theoremstyle{plain}
\newtheorem{thm}{Theorem}
\newtheorem*{thm*}{Theorem}
\newtheorem{lem}{Lemma}[section]

\newtheorem{prop}[lem]{Proposition}

\theoremstyle{definition}
\newtheorem{df}[lem]{Definition}
\newtheorem{rem}[lem]{Remark}
\newtheorem{ex}[lem]{Example}


\newcommand{\bbR}{\mathbb{R}}

\newcommand{\bbZ}{\mathbb{Z}}

\newcommand{\bbC}{\mathbb{C}}
\newcommand{\bbK}{\mathbb{K}}
\newcommand{\bbH}{\mathbb{H}}
\newcommand{\bbO}{\mathbb{O}}

\newcommand{\R}{\mathbb{R}}
\newcommand{\Z}{\mathbb{Z}}
\newcommand{\C}{\mathbb{C}}

\newcommand{\bigslant}[2]{{\raisebox{.2em}{$#1$}\left/\raisebox{-.2em}{$#2$}\right.}}


\makeatletter

\@addtoreset{equation}{section}
\makeatother

\begin{document}

\title{Bott type periodicity for the higher octonions}

\date{}

\author{Marie Kreusch}

\maketitle

\begin{abstract}
We study the series of complex nonassociative algebras $\bbO_{n}$ 
and real nonassociative algebras $\bbO_{p,q}$
 introduced in~\cite{MGO2011}.
These algebras generalize the classical algebras of octonions and Clifford algebras.
The algebras $\bbO_{n}$ and $\bbO_{p,q}$ with $p+q=n$
have a natural $\Z_2^n$-grading, and they are
characterized by cubic forms over the field $\Z_2$.
We establish a periodicity for the algebras~$\bbO_{n}$ and $\bbO_{p,q}$ similar to 
that of the Clifford algebras $\mathrm{Cl}_{n}$ and~$\mathrm{Cl}_{p,q}$.
\end{abstract}


\section{Introduction}

A series of noncommutative and nonassociative algebras $\bbO_{p,q}$ over $\R$
and their complexification~$\bbO_{n}$ (where $n=p+q$) 
was recently introduced~\cite{MGO2011} and studied in~\cite{LMGO2011,MGO2013,KMG2014}.
The algebras~$\bbO_{n}$ and $\bbO_{p,q}$ generalize the classical algebras of octonions and split octonions
in the same way as the Clifford algebras generalize the quaternions. 
Note that the algebra of octonions $\bbO$ appears in the series as $\bbO_{0,3}$,
whereas the algebra of split octonions is isomorphic to~$\bbO_{3,0}$,~$\bbO_{1,2}$ and $\bbO_{2,1}$.
The properties of the algebras $\bbO_{n}$ and $\bbO_{p,q}$ are very different from
those of the classical Cayley-Dickson algebras.  

The series of algebras $\bbO_{n}$ and $\bbO_{p,q}$ can be illustrated by the following diagram. 
\medskip

\begin{figure}[htbp]
\begin{tikzpicture}[every text node part/.style={align=center}, scale=2.7]
  \node[draw, rounded corners=3pt] (CDA) at (5,0) {\footnotesize{Cayley-Dickson} \\  \footnotesize{algebras}};  
  \node[draw=white] (R) at (0,0) {$\bbR$};
    \node[draw=white] (C) at (1,0) {$\bbC$};
\draw [>=stealth,->] (R) -- (C);
    \node[draw=white] (H) at (2,0) {$\bbH$};
\draw [>=stealth,->] (C) -- (H);
    \node[draw=white] (O) at (3,0) {$\bbO$};
\draw [>=stealth,->] (H) -- (O);
    \node[draw=white] (S) at (4,0) {  $\quad $ $\ldots$};
\draw [>=stealth,->] (O) -- (S);
    \node[draw=white] (Cl3) at (2,-0.5) {$\mathrm{Cl}_{0,3}$};
    \node[draw=white] (Cl4) at (2,-1) {$\mathrm{Cl}_{0,4}$};
    \node[draw=white] (Cldot) at (2,-1.5) {$\vdots$};
\draw[->,>=latex] (Cl3) -- (Cl4);
\draw[->,>=latex] (H) -- (Cl3);
\draw[->,>=latex] (Cl4) -- (Cldot);
  \node[draw, rounded corners=3pt] (CDA) at (2,-2) {\footnotesize{Clifford} \\  \footnotesize{algebras}};  
   \node[draw=white] (O4) at (3,-0.5) {$\bbO_{0,4}$};
    \node[draw=white] (O5) at (3,-1) {$\bbO_{0,5}$};
    \node[draw=white] (Odot) at (3,-1.5) {$\vdots$};
\draw[->,>=latex] (O4) -- (O5);
\draw[->,>=latex] (O) -- (O4);
\draw[->,>=latex] (O5) -- (Odot);
  \node[draw, rounded corners=3pt] (CDA) at (3,-2) { \footnotesize{algebras} \\ $\bbO_{p,q}$};  
\end{tikzpicture}
\label{diagramalgebras}\caption{Families of $\Z_2^n$-graded algebras}
\end{figure}
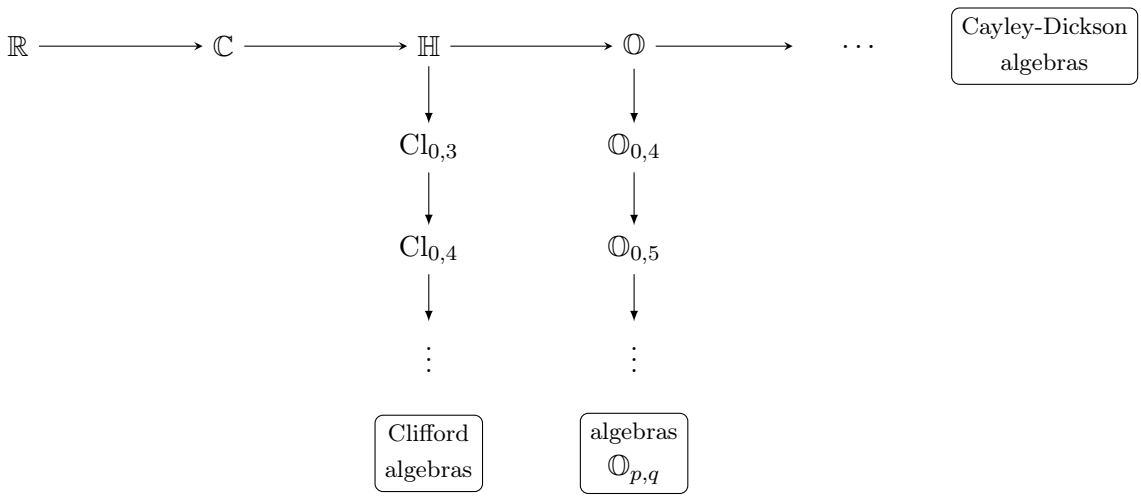

\medskip

The complex algebras $\bbO_{n}$ and especially the real algebras $\bbO_{0,n}$ 
have applications to the classical Hurwitz problem of sum of square identities 
and related problems; see~\cite{LMGO2011,MGO2013}. 
An application of $\bbO_{n}$ to additive combinatorics was suggested in~\cite{SV}.

The idea to understand the classical algebra of the octonions $\bbO$ as a graded algebra
was suggested in~\cite{Eld}, where, in particular, a~$\Z_2^3$-grading was considered\footnote{
Throughout the paper we denote by~$\Z_2$ the quotient $\Z/2\Z$
understood as abelian group, and also as a field of two elements $\{0,1\}$.}.
In~\cite{AM1999}, the algebra $\bbO$ was understood as a twisted group algebra
over~$\Z_2^3$ which has a graded commutative and graded associative structure.

The algebras $\bbO_{n}$ and $\bbO_{p,q}$ are graded algebras over
the abelian group $\Z_2^n$; the algebra $\bbO_{0,3}$ is isomorphic to $\bbO$.
Moreover, they are characterized by a {\it cubic form}
$$
\alpha:\Z_2^n\to\Z_2,
$$  
where $\Z_2^n$ is understood as a vector space of dimension $n$ over 
the field~$\Z_2$ of two elements; see~\cite{MGO2011}.
This is the main property of the algebras $\bbO_{n}$ and $\bbO_{p,q}$
that distinguish them from other series of algebras generalizing the octonions,
such as the Cayley-Dickson algebras.

The problem of classification of the real algebras $\bbO_{p,q}$ 
with {\it fixed} $n=p+q$ depending on the signature $(p,q)$, 
was formulated in~\cite{MGO2011}.
This problem was solved in~\cite{KMG2014},
the result is as follows.
The classification table of $\bbO_{p,q}$ for $p$ and $q\not=0$, coincides with the well known 
table of the real Clifford algebras; the algebras $\bbO_{0,n}$ and $\bbO_{n,0}$ are exceptional. 

The present paper answers the following problem: 
how do the algebras $\bbO_{n}$ and $\bbO_{p,q}$ with~$p+q=n$
depend on the parameter $n$?
Similarity with the Clifford algebras allows one to expect properties of periodicity,
in particular it is natural to look for analogs of so-called {\it Bott periodicity}; see~\cite{Baez2002}. 

We consider the problem of periodicity in the complex and in the real cases separately.
We establish a periodicity modulo $4$.
In the complex case, we
link together the algebras~$\bbO_{n}$ and~$\bbO_{n+4}$.
Note that for the complex Clifford algebras there is a simple periodicity modulo~$2$.
In the real case,
for the algebras $\bbO_{p,q}$ (provided $p>0$ and $q>0$) we establish 
two different results about periodicity modulo $4$. 
The situation for the exceptional algebras $\bbO_{0,n}$ and $\bbO_{n,0}$ is different. 
The results are compared to the well-known results for the Clifford algebras $\mathrm{Cl}_{p,q}$.

\section{The algebras $\bbO_{n}$ and $\bbO_{p,q}$}  \label{Sectiondefinition}
In this section, we recall definitions of the complex algebras $\bbO_{n}$ and of the real algebras $\bbO_{p,q}$,
as twisted group algebras over~$\Z_2^n$ characterized by a cubic form. 
We then give an equivalent definition of  $\bbO_{n}$ and $\bbO_{p,q}$ in term of generators and relations. 
Finally, we recall the main results of classification from \cite{KMG2014}. 
 
\subsection{$\bbO_{n}$ and $\bbO_{p,q}$ as twisted group algebras over $\bbZ_2^n$}
We denote by $\bbK$ the ground field assumed to be $\bbR$ or $\bbC$.
Let  $f$ be an arbitrary function in two arguments
$$
f:\bbZ_2^n \times \bbZ_2^n\to\bbZ_2.
$$
The {\it twisted group algebra} $\mathcal{A}= (\bbK\left[  \bbZ_2^n \right],f)$
(for more detail see~\cite{Conlon1964} and \cite{MGO2011}) 
is defined as the $2^n$-dimensional vector space with the basis 
$\{u_x, x\in \bbZ_2^n\}$, and equipped with the product
\begin{equation*}
\label{twistedproduct}
u_x \cdot u_y = (-1)^{f(x,y)} u_{x+y},
\end{equation*}
for all $x,y\in \bbZ_2^n$.

\begin{ex}
(a)
Recall that the real {\it Clifford algebra} denoted by $\mathrm{Cl}_{p,q}$
is the associative algebra with $n=p+q$ generators $v_1,\ldots,v_n$
satisfying the relations
\begin{equation}
\label{CliFGen}
\begin{array}{rcl}
v_i^2 &=& \left\{
\begin{array}{rl}
1, & 1 \leq i \leq p,\\
-1, & p+1 \leq i \leq p+q,
\end{array}
\right.
\\[10pt]
v_i \cdot v_j&=&-v_j \cdot  v_i, 
\end{array}
\end{equation}
for all $i \neq j  \leq n$.
Obviously, $\dim\mathrm{Cl}_{p,q}=2^n$, and a natural basis is
$$
\left\{
v_{i_1}\cdots{}v_{i_k}\vert{}\;1\leq{}i_1<\ldots<i_k\leq{}n
\right\}.
$$

The (real) algebra of quaternions $\bbH$
($\,\simeq\mathrm{Cl}_{0,2}$), and more generally every complex or real Clifford algebra 
with $n$ generators can be realized as twisted group algebras over $\bbZ_2^n$; see \cite{AM2002}.
Denote by $x=(x_1, \ldots,x_n)$ and $y=(y_1, \ldots,y_n)$ the elements in $\bbZ_2^n$
(where the components $x_i$ and $y_i$ are equal to $0$ or $1$) and defined two functions given by 
\begin{equation*}
\label{fclassiccomplex}
f _{\mathrm{Cl}_{n}} \left( x,y \right):= 
\sum_{1 \leq i \leq j \leq n} x_i y_j , 
\end{equation*}
\begin{equation*}
\label{fclassic}
f _{\mathrm{Cl}_{p,q}} \left( x,y \right):= f _{\mathrm{Cl}_{n}}  \left( x,y \right)+ \sum_{1\leq{}i\leq{}p}x_i y_i \quad \quad (n=p+q).
\end{equation*}
Then the defined twisted group algebras are isomorphic to $\mathrm{Cl}_{n}$
in the complex case, and to~$\mathrm{Cl}_{p,q}$ in the real case.
In particular, $f _{\bbH} \left( x,y \right)=x_1y_1+x_1y_2+x_2y_2$ corresponds to the algebra
of quaternions.

(b)
The (real) algebra of octonions $\bbO$ is a twisted group algebra over $\bbZ_2^3$; see \cite{AM1999}.
The twisting function is cubic:
\begin{equation*}
\label{fclassic2}
f _{\bbO} \left( x,y \right)=
(x_1 x_2 y_3 +  x_1 y_2 x_3 + y_1 x_2 x_3 ) + \sum_{1 \leq i \leq j \leq 3} x_i y_j   .
\end{equation*}
\end{ex}

The next definition is the main object of the present paper.

\begin{df}
\label{definition}
\cite{MGO2011}
The \emph{complex algebra $\bbO_{n}$}
and the \emph{real algebra $\bbO_{p,q}$} with $p+q=n\geq 3$ are the twisted group algebras 
with the twisting functions
\begin{eqnarray*}
\label{fO}
f_{\bbO_{n}} \left( x,y \right)&=& 
\sum_{1 \leq i < j < k \leq n} (x_i x_j y_k +  x_i y_j x_k + y_i x_j x_k ) + 
\sum_{1 \leq i \leq j \leq n} x_i y_j , \\[4pt]
\label{fOpq}
f _{\bbO_{p,q}} \left( x,y \right)&=& 
f _{\bbO_n} \left( x,y \right)
+ \sum_{1\leq{}i\leq{}p}x_i y_i, 
\end{eqnarray*}
respectively.
Note that the element $1:= u_{(0, \ldots , 0)}$ is the {\it unit} of the algebra. 
\end{df}

The real algebra $\bbO_{0,3}$ is nothing but the classical algebra $\bbO$ of octonions.

\begin{df}
For both series of algebras $\mathrm{Cl}_{p,q}$ and $\bbO_{p,q}$
the index $(p,q)$ is called the {\it signature}, and throughout the paper we assume $p+q=n$.  
\end{df}

\subsection{Graded-commutative and graded-associative algebras}

Every twisted algebra $(\bbK[\bbZ^n_2],f)$ is a graded algebra over the group~$\bbZ_2^n$. 
In general, a twisted group algebra is neither commutative nor associative. 
The defect of commutativity and associativity is
measured by a symmetric function $\beta:\bbZ_2^n \times \bbZ_2^n\to\bbZ_2$, 
and a function $\phi:\bbZ_2^n \times \bbZ_2^n \times \bbZ_2^n \to\bbZ_2$, respectively:
\begin{eqnarray} 
\label{comass1}
u_x \cdot u_y &=& (-1)^{\beta(x,y)}\; u_y \cdot u_x \label{comass1} ,\\[4pt]
u_x \cdot ( u_y \cdot u_z)&  =& (-1)^{\phi(x,y,z)} \;(u_x \cdot  u_y) \cdot u_z  \label{comass2},
\end{eqnarray}
where $\beta $ and $\phi$ are given by
\begin{eqnarray} 
\label{BetEq}
\beta(x,y) &=& f(x,y)+f(y,x) ,\\[4pt]
\label{PhiEq}
\phi(x,y,z)&  =& f(x,y)+f(x,y+z)+f(x+y,z)+f(y,z).
\end{eqnarray}
Note that the second formula reads $\phi=\delta{}f$ where $\delta$ is the coboundary operator.
Therefore, the function $\phi$ is a trivial $3$-cocycle on $\bbZ_2^n$ with coefficients in $\Z_2$.

Algebras satisfying the relations (\ref{comass1}) and (\ref{comass2}) are called
{\it graded-commutative} and {\it graded-associative}, respectively.
In particular, the algebras $\bbO_{n}$ and $\bbO_{p,q}$ are 
graded-commutative and graded-associative.

\begin{rem}
Note also that the algebras $\bbO_{n}$ and $\bbO_{p,q}$ are {\it graded-alternative}, 
 i.e., 
 $$
 u\cdot(u\cdot v)=u^2\cdot v,
 $$ 
 for all homogeneous elements~$u,v$.
\end{rem}

\subsection{The algebras $\bbO_{n}$ and $\bbO_{p,q}$: generators and relations} \label{abstract algebras}

We give here another, equivalent definition of the algebras $\bbO_{n}$ and $\bbO_{p,q}$ 
with the help of generators and relations. 
 
Let denote the basis elements of the abelian group $\bbZ_2^n$, 
\begin{equation}
\label{Gener}
e_i=(0,\ldots ,0,1,0,\ldots,0)
\end{equation}
 where $1$ stands at the $i^{th}$ position. 
 The homogeneous elements $u_i := u_{e_i}$, $1\leq i \leq n$ 
 form a set of generators of the group algebra $\bbK[\bbZ^n_2]$.
 
Let $u= u_{i_1} \cdots  u_{i_k}$ 
 be a monomial in the generators, the {\it degree} of $u$ is the element of $\bbZ^n_2$ given by
  $$
 \bar u := \bar u_{i_1} + \ldots +   \bar u_{i_k},
 $$
where the degree of the generator $u_{i}$ is $ \bar u_{i}={e_i}$. 
The monomials form a basis of the group algebra~$\bbK[\bbZ^n_2]$.

It was shown in~\cite{MGO2011} that 
the exists a {\it unique} trilinear form $\phi:\Z_2^n\times\Z_2^n\times\Z_2^n\to\Z_2$
such that 
\begin{equation}
\label{Uniq}
\phi(e_i,e_j,e_k)=1,
\end{equation}
for all distinct $i, j$ and $k$ in $\{1, \ldots ,n\}$. 
The algebras $\bbO_{p,q}$ can be equivalently defined
as follows.

\begin{df}
\begin{enumerate}
\item[(a)]
The algebra $\bbO_{p,q}$ is the unique real unital algebra, 
generated by $n$ elements $ u_1, \ldots , u_n $ $(p+q=n)$,
subject to the relations
\begin{eqnarray*}
\label{genrel}
u_i^2 &=& \left\{
\begin{array}{rl}
1, & 1 \leq i \leq p,\\
-1, & p+1 \leq i \leq p+q,
\end{array}
\right.
\\[5pt]
u_i \cdot u_j&=&-u_j \cdot  u_i, 
\end{eqnarray*}
for all $i \neq j  \leq n$,
together with the graded associativity
\begin{equation*}
\label{MonRel}
u\cdot(v\cdot w)=(-1)^{\phi(\bar u, \bar v, \bar w)}(u\cdot v)\cdot w,
\end{equation*}
where $u,v,w$ are monomials, and where $\phi$ is the unique trilinear form satisfying (\ref{Uniq}). 

\item[(b)]
The algebra $\bbO_{n}$ is the complexification of $\bbO_{p,q}$,
its generators satisfy
the same relations.

\end{enumerate}
\end{df}

Clearly, the complexifications of $\bbO_{p,q}$ and $\bbO_{p',q'}$ with $p+q=p'+q'=n$
are isomorphic.

The following observation is important.

\begin{rem}
The trilinear form $\phi$ is {\it symmetric} in three arguments, i.e.,
\begin{equation}
\label{Sic}
\phi(x,y,z)=\phi(x,z,y)=\cdots=\phi(z,y,x),
\end{equation}
for all $x,y,z\in\Z_2^n$. 
\end{rem}

\subsection{Classification of $\bbO_{p,q}$}

Classification of $\bbO_{p,q}$ as $\Z_2^n$-graded algebras was obtained in \cite{KMG2014}. 
It consists in the list of isomorphisms between these algebras 
that preserve the structure of $\bbZ_2^n$-graded algebra
(i.e., isomorphisms sending homogeneous elements into homogeneous)
are as follows. 

\begin{prop}
\label{thmIso}
If $pq\not=0$, then there are the following isomorphisms of graded algebras:
\begin{enumerate}
\item[(i)] 
$\bbO_{p,q}\simeq \bbO_{q,p}\;$;
\item[(ii)] 
$\bbO_{p,q+4}\simeq \bbO_{p+4,q}\;$;
\item[(iii)] 
For $n\geq 5$, the algebras $\bbO_{n,0}$ and $\bbO_{0,n}$ are not isomorphic,
and are not isomorphic to any other algebras $\bbO_{p,q}$ with $p+q=n$.
\end{enumerate}
All the isomorphisms between the algebras $\bbO_{p,q}$ are as above.
\end{prop}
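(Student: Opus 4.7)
My plan is to translate the problem into one of $\mathrm{GL}_n(\Z_2)$-equivalence of cubic forms. First I would establish the standard dictionary between graded algebra isomorphisms and twisting function equivalences: a graded isomorphism between two twisted group algebras $(\bbK[\Z_2^n], f)$ and $(\bbK[\Z_2^n], f')$ corresponds to a pair $(A, g)$ with $A \in \mathrm{GL}_n(\Z_2)$ and $g : \Z_2^n \to \Z_2$ (with $g(0)=0$) acting by $u_x \mapsto (-1)^{g(x)} u'_{Ax}$, precisely when
\[
f(x,y) \;=\; f'(Ax, Ay) + g(x) + g(y) + g(x+y).
\]
Evaluating at $y=x$ and using $2g(x)=0$, this gives $\alpha_f(x) = \alpha_{f'}(Ax)$, where $\alpha_f(x) := f(x,x)$ is the associated cubic form. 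Since the algebras $\bbO_{p,q}$ are characterized by their cubic forms (as stated in the introduction and proved in \cite{MGO2011}), the graded isomorphism problem reduces to the $\mathrm{GL}_n(\Z_2)$-equivalence of
\[
\alpha_{\bbO_{p,q}}(x) \;=\; \sum_{i<j<k} x_i x_j x_k + \sum_{i<j} x_i x_j + \sum_{i > p} x_i.
\]

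For part (i), I would exhibit an explicit $A \in \mathrm{GL}_n(\Z_2)$ realizing $\alpha_{\bbO_{p,q}} = \alpha_{\bbO_{q,p}} \circ A$. Having both a positive and a negative generator allows a substitution of the form $x_i \mapsto x_i + x_j$ with $i \leq p < j$, which shifts the linear part by quadratic and cubic corrections that turn out (after a short computation; for $n=3$ the substitution $x_1 \mapsto x_1 + x_2$ already converts $\alpha_{\bbO_{1,2}}$ to $\alpha_{\bbO_{2,1}}$) to be exactly what is needed to rebalance the linear term. For part (ii), I expect an analogous but longer substitution supported on four coordinates, chosen so that three of the four induced sign changes cancel against cubic/quadratic remainders and the net effect on $\sum_{i > p} x_i$ is to flip exactly four of its summands. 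This structural need for four-at-a-time corrections is precisely what forces the period to be $4$ rather than the period $2$ familiar from complex Clifford algebras.

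The main obstacle is part (iii) together with the completeness claim. To prove that $\bbO_{n,0}$ and $\bbO_{0,n}$ (for $n \geq 5$) are isolated from the rest of the series and that the isomorphisms in (i)--(ii) exhaust all possibilities, I would introduce finer $\mathrm{GL}_n(\Z_2)$-invariants of the cubic form. A natural tool is \emph{polarization}: for each $t \in \Z_2^n$, the map $x \mapsto \alpha(x+t) + \alpha(x) + \alpha(t)$ is a polynomial of degree $\leq 2$ in $x$, and its Arf invariant and rank depend only on the equivalence class of $\alpha$ together with the orbit of $t$ under the stabilizer of $\alpha$ in $\mathrm{GL}_n(\Z_2)$. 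I expect the exceptional algebras $\bbO_{n,0}$ and $\bbO_{0,n}$ to be distinguished by singular behavior of this polarization along the distinguished direction $t = (1, \dots, 1)$, where the linear part of $\alpha_{\bbO_{p,q}}$ evaluates to a quantity depending nontrivially on $p \bmod 4$ and vanishes only in the extremal cases. The hardest step will be organizing this polarization data into a complete invariant parametrized by $(p,q)$ modulo the symmetries of (i)--(ii), which amounts to a delicate classification of the cubic forms $\alpha_{\bbO_{p,q}}$ over $\Z_2^n$ — an essentially combinatorial task with no direct analog in characteristic different from $2$.
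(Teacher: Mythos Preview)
The paper does not contain a proof of this proposition: it is quoted verbatim from~\cite{KMG2014} (see the sentence immediately preceding the statement, ``Classification of $\bbO_{p,q}$ as $\Z_2^n$-graded algebras was obtained in \cite{KMG2014}''), and the present paper uses it as a black box throughout Section~6. So there is no ``paper's own proof'' to compare your proposal against.

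That said, your reduction to $\mathrm{GL}_n(\Z_2)$-equivalence of the cubic forms $\alpha_{p,q}$ is exactly the framework the paper sets up in Section~3.4 (Proposition labeled there by the uniqueness of the generating function), and your computation for $n=3$ is correct. Your outline for (i) and (ii) via explicit elementary substitutions is the natural approach and is indeed how such equivalences are produced in the paper (cf.\ the explicit coordinate changes in Section~6). Your plan for (iii) and for the completeness claim --- extracting $\mathrm{GL}_n(\Z_2)$-invariants from the polarization $x\mapsto\alpha(x+t)+\alpha(x)+\alpha(t)$ --- is reasonable in spirit, since this polarization is precisely the function $\beta$ of Definition~\ref{defalpha}(ii) and its Arf-type data are genuine invariants. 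But be aware that you have not yet identified which invariant actually separates $\bbO_{0,n}$, $\bbO_{n,0}$, and the generic $\bbO_{p,q}$; your remark about the direction $t=(1,\dots,1)$ is suggestive but not a proof, and the ``complete invariant'' step is where the real work lies. If you intend to reconstruct the argument rather than cite~\cite{KMG2014}, that final classification step needs to be made explicit.
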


Note that, apart for the algebras $\bbO_{n,0}$ and $\bbO_{0,n}$ which are exceptional,
the above classification is quite similar to the classification of $\mathrm{Cl}_{p,q}$.

A kind of degeneracy occurs in the small dimensions, since for $n=3$, one has :  
\[
\bbO_{3,0} \simeq \bbO_{2,1}\simeq \bbO_{1,2} \not\simeq  \bbO_{0,3},
\]
and for 
 $n= 4$, one has : 
\[
\bbO_{4,0} \simeq \bbO_{2,2} \not\simeq \bbO_{1,3} \simeq  \bbO_{3,1} \not\simeq \bbO_{0,4} .
\]

Let us also mention the following criterion of simplicity from~\cite{MGO2011}.

\begin{prop}
The algebra $\bbO_{p,q}$  is simple if and only if $p+q \not\equiv 0 \mod 4$,
or $p+q \equiv 0 \mod 4$ and $p,q$ are odd. 
\end{prop}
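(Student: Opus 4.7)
The plan is to use the \emph{volume element} $\omega := u_{\mathbf{1}}$ with $\mathbf{1}=(1,\ldots,1)$ as the main invariant controlling simplicity, parallel to the role of the volume element for the Clifford algebras. First I compute $\beta(\mathbf{1},x)$ and $\phi(\mathbf{1},y,z)$ explicitly from $f_{\bbO_{p,q}}$. The signature summand $\sum_{i\leq p}x_iy_i$ is bilinear and symmetric, so it cancels in $\beta=f+f^{\mathrm{op}}$ and contributes $0$ to the coboundary $\phi=\delta f$; both computations thus reduce to $f_{\bbO_n}$. Using that $f_{\bbO_n}$ is linear in its second argument, a direct expansion gives
$$
\beta(\mathbf{1},x)\equiv \textstyle\bigl(\binom{n-1}{2}+n+1\bigr)|x|+(n-2)\binom{|x|}{2}\pmod 2,
$$
which vanishes for every $x$ iff $n\equiv 0\pmod 4$. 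From the defining values $\phi(e_i,e_j,e_k)=1$ for distinct $i,j,k$ (and $0$ otherwise), trilinearity and symmetry of $\phi$ yield $\phi(\mathbf{1},y,z)\equiv (n-2)\sum_{i\neq j}y_iz_j\pmod 2$, which vanishes identically iff $n$ is even. Combining, $\omega\in Z(\bbO_{p,q})$ iff $n\equiv 0\pmod 4$. A short calculation of $f_{\bbO_{p,q}}(\mathbf{1},\mathbf{1})\equiv\binom{n}{3}+\binom{n+1}{2}+p\pmod 2$ then yields $\omega^2=(-1)^p$ in that case.

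The non-simplicity direction is immediate. If $n\equiv 0\pmod 4$ and $p$ (hence $q$) is even, then $\omega^2=1$ and $e_\pm:=\tfrac12(1\pm\omega)$ are central idempotents which, being $\bbR$-linear combinations of $1$ and $\omega$, lie in the nucleus. Nuclearity yields $y\cdot(e_+ a)=(y\cdot e_+)\cdot a=(e_+\cdot y)\cdot a=e_+\cdot(ya)$ for all $y,a\in\bbO_{p,q}$, so $e_+\bbO_{p,q}$ is a two-sided ideal; it is proper because $e_+ b=1$ would force $e_+=e_+\cdot(e_+ b)=(e_+ e_+)b=e_+ b=1$.

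For the converse (simplicity in the remaining cases) I first identify the full centre. Applying the centrality criterion to a basis vector $u_x$ shows $u_x\in Z(\bbO_{p,q})$ iff $\beta(y,x)=0$ and $\phi(x,y,z)=0$ for all $y,z$; a direct check of $\phi(x,e_l,e_m)\equiv |x|+x_l+x_m\pmod 2$ forces $x\in\{0,\mathbf{1}\}$, so $Z(\bbO_{p,q})=\bbR$ when $n\not\equiv 0\pmod 4$ and $Z(\bbO_{p,q})=\bbR\oplus\bbR\omega$ otherwise; in the exceptional simple subcase $n\equiv 0\pmod 4$ with $p,q$ both odd, $\omega^2=-1$ makes this centre a field isomorphic to $\bbC$. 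Given $0\ne a\in I$, I first multiply by $u_{x_0}^{-1}$ for some $x_0$ in the support of $a$ (invertible because $u_{x_0}^2=\pm1$) to arrange that the identity coefficient of $a$ is nonzero. A short calculation, using linearity of $f$ in the second argument and the identity $\phi(y,x,y)\equiv 0\pmod 2$, gives $(u_y a)u_y^{-1}=\sum_x c_x(-1)^{\beta(y,x)}u_x$, so the two-sided projection $T_y(a):=\tfrac12\bigl(a+(u_y a)u_y^{-1}\bigr)\in I$ restricts the support of $a$ to $\{x:\beta(y,x)=0\}$; the analogous associator combination $(au_y)u_z-a(u_yu_z)\in I$ restricts to $\{x:\phi(x,y,z)=0\}$. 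Since these projections preserve the identity coefficient, iterating them over all $y,z$ produces a nonzero element of $I\cap Z(\bbO_{p,q})$, which is invertible (since $Z$ is a field in the relevant cases), hence $I=\bbO_{p,q}$.

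The main obstacle is this last reduction. Because $\bbO_{p,q}$ is non-associative, a two-sided ideal is not \emph{a priori} $\Z_2^n$-graded, so averaging over the sign-twist automorphisms $\tau_g:u_x\mapsto(-1)^{\langle g,x\rangle}u_x$ only yields elements of $\sum_g\tau_g(I)$, not of $I$. The delicate point is to verify that the commutator projections $T_y$ and the associator projections, applied in the right order and starting from an element with nonzero identity coefficient, actually reduce $a$ to an element supported on the very small graded centre $\{1,\omega\}$ without ever killing the whole thing; this uses graded-alternativity in an essential way to ensure each intermediate expression stays inside $I$.
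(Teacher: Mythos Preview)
The paper does not prove this proposition; it merely quotes the statement from~\cite{MGO2011}. So there is no ``paper's own proof'' to compare against, and your proposal must stand on its own merits.

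Your overall strategy is correct and essentially complete. The computations of $\beta(\mathbf 1,\cdot)$, $\phi(\mathbf 1,\cdot,\cdot)$, and $\omega^2=(-1)^p$ are right, the central-idempotent argument for non-simplicity is fine (nuclearity of $\omega$ is exactly what you need to make $e_\pm\bbO_{p,q}$ a two-sided ideal), and the identification of the graded centre as $\{0,\mathbf 1\}$ via $\phi(x,e_l,e_m)\equiv |x|+x_l+x_m$ is correct.

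Two points deserve tightening. First, your associator projection is mis-stated: the associator $(a,u_y,u_z)=(au_y)u_z-a(u_yu_z)$ is supported on the \emph{complement} $\{x:\phi(x,y,z)=1\}$, not on $\{x:\phi(x,y,z)=0\}$. The operator you actually want is
\[
P_{y,z}(a)\;:=\;\tfrac12\Bigl(a+\bigl((au_y)u_z\bigr)(u_yu_z)^{-1}\Bigr),
\]
which lies in $I$ (each factor is a left or right multiplication) and, using $\phi(x,y+z,y+z)=0$, satisfies $P_{y,z}(u_x)=u_x$ if $\phi(x,y,z)=0$ and $P_{y,z}(u_x)=0$ otherwise. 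Second, the anxiety in your final paragraph is unnecessary: both $T_y$ and $P_{y,z}$ act \emph{diagonally} in the basis $\{u_x\}$, so they commute with one another, each preserves the coefficient of $1=u_0$, and applying all of them yields an element of $I$ supported on $\bigcap_y\{x:\beta(y,x)=0\}\cap\bigcap_{y,z}\{x:\phi(x,y,z)=0\}$, which is exactly the graded centre, with nonzero constant term. No delicate ordering is needed, and graded-alternativity enters only through $\phi(x,w,w)=0$.
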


\section{The generating cubic form} \label{abstract algebras}

We will be needing  a theory, developed in \cite{MGO2011}, 
about a class of twisted algebras over $\bbZ_2^n$
that are characterized by a cubic form;
this is the case for the algebras $\bbO_{n}$ and $\bbO_{p,q}$.
The structure of twisted group algebras that can be equipped with 
generating function is much simpler than that
of arbitrary  twisted group algebras.
Note that this class contains such interesting algebras as the code loops~\cite{Gri} (see~\cite{MGO2011}),
whereas the Cayley-Dickson algebras higher that the octonions
do not belong to this class.

\subsection{The notion of generating function} 
\begin{df}\label{defalpha}
Given a twisted group algebra $(\bbK\left[  \bbZ_2^n \right],f)$,
a function $\alpha: \bbZ_2^n \longrightarrow \bbZ_2$ is called
a \emph{generating function} if
$$
\begin{array}{llcl}
(i)&f(x,x)&=&\alpha(x),\\[6pt]
(ii)&\beta( x, y )  &=& \alpha (x+y) + \alpha (x) + \alpha (y) , \\[6pt]
(iii)&\phi( x, y, z )  &=& \alpha (x+y+z ) + \alpha (x+y) + \alpha (x+z) +  \alpha (y+z) + \alpha (x) + \alpha (y) + \alpha(z),
\end{array}
$$
where $x,y,z\in\bbZ_2^n$ and where $\beta$ and $\phi$ are as in~(\ref{BetEq}) and (\ref{PhiEq}).
\end{df}

The following statements were proved in \cite{MGO2011}.

\begin{enumerate} 
\item
\label{statement1}
A twisted group algebra $(\bbK\left[  \bbZ_2^n \right],f)$
has a generating function if and only if the function~$\phi:=\delta{}f$ is 
symmetric as in \eqref{Sic}.

\item
\label{statement2}
The generating function $\alpha$ is a polynomial on $\Z_2^n$ of degree $\leq3$.

\item
\label{statement3}
Given any polynomial $\alpha$ on $\Z_2^n$ of degree $\leq3$, 
there exists a unique (up to isomorphism) twisted group algebra~$(\bbK\left[  \bbZ_2^n \right],f)$ 
having $\alpha$ as a generating function.
\end{enumerate}

It follows that if a twisted group algebra has a generating function then it is
completely characterized by this function.

\subsection{Cubic forms on $\bbZ^n_2$ and twisted group algebras} 
Every cubic form $\alpha:\bbZ^n_2\to\Z_2$ is as follows:
\begin{equation}
\label{AlForm}
\alpha(x)=
\sum_{1\leq{}i\leq{}j\leq{}k\leq{}n}
A_{ijk}\,x_ix_jx_k,
\end{equation}
where the coefficients $A_{ijk}=0$ or $1$.
Note that over $\Z_2$ one has $x_i^3=x_i^2=x_i$, and
therefore, every polynomial of degree $\leq3$ is a homogeneous cubic form.
The general theory of such cubic forms is not developed, and the classification
is unknown; see~\cite{Hou}. 

One can define a twisting function $f_\alpha$
associated with a cubic form $\alpha$ according to the following explicit procedure.
To every monomial one associates:
\begin{equation}
\label{ExplEq}
\begin{array}{rcl}
x_ix_jx_k&\longmapsto&x_ix_jy_k+x_iy_jx_k+y_ix_jx_k,\\[4pt]
x_ix_j&\longmapsto&x_iy_j,\\[4pt]
x_i&\longmapsto&x_iy_i,
\end{array}
\end{equation}
where $1 \leq i<{}j<{}k \leq n$.
Then one extends the above map to the cubic polynomial $\alpha$ by linearity in monomials.

\begin{prop}
Given a cubic function $\alpha$, the corresponding function $f_\alpha$ satisfies
Properties~\ref{statement1}, \ref{statement2} and \ref{statement3} above.
\end{prop}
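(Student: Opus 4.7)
The plan is to verify, monomial by monomial, that $\alpha$ serves as a generating function for $f_\alpha$ in the sense of Definition~\ref{defalpha}. Both the construction $\alpha \mapsto f_\alpha$ from~\eqref{ExplEq} and each of the three defining conditions (i)--(iii) are $\Z_2$-linear in $\alpha$ and $f$, so decomposing the cubic polynomial $\alpha$ into monomials reduces the problem to three cases: $\alpha(x)=x_i$, $\alpha(x)=x_ix_j$ with $i<j$, and $\alpha(x)=x_ix_jx_k$ with $i<j<k$. Once the three conditions are verified in each case, Property~2 is automatic, Property~1 follows from the manifest symmetry of the right-hand side of~(iii), and the existence half of Property~3 is realized by the construction itself.

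The linear and quadratic cases are routine. For $\alpha(x) = x_i$ we have $f_\alpha(x,y) = x_iy_i$; condition~(i) follows from $x_i^2 = x_i$ in $\Z_2$, while $\beta$ and $\phi$ both vanish identically and so do the corresponding alternating sums of $\alpha$. For $\alpha(x) = x_ix_j$ we have $f_\alpha(x,y) = x_iy_j$; condition~(i) is immediate, condition~(ii) reduces to the identity $x_iy_j + y_ix_j = (x_i+y_i)(x_j+y_j) + x_ix_j + y_iy_j$, and for condition~(iii) both sides vanish (the seven-term alternating sum of a quadratic form over $\Z_2$ collapses to zero by a direct expansion).

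The main calculational obstacle is the cubic case. For $\alpha(x) = x_ix_jx_k$ and
\begin{equation*}
f_\alpha(x,y) = x_ix_jy_k + x_iy_jx_k + y_ix_jx_k,
\end{equation*}
condition~(i) holds because $3 = 1$ in $\Z_2$; condition~(ii) is obtained by expanding $\alpha(x+y)+\alpha(x)+\alpha(y)$ into eight monomials, cancelling the two pure terms $x_ix_jx_k$ and $y_iy_jy_k$, and matching the remaining six mixed terms with $f_\alpha(x,y)+f_\alpha(y,x)$. Condition~(iii) is the most involved: a direct expansion of $\phi(x,y,z) = f_\alpha(x,y)+f_\alpha(x,y+z)+f_\alpha(x+y,z)+f_\alpha(y,z)$ shows, after cancellations modulo $2$, that $\phi(x,y,z)$ collapses to the symmetric expression
\begin{equation*}
\sum_{\sigma \in S_3} x_{\sigma(i)}\, y_{\sigma(j)}\, z_{\sigma(k)},
\end{equation*}
and a parallel expansion of the seven-term alternating sum on the right-hand side of~(iii), by an inclusion-exclusion argument in which only the monomials involving one variable from each of $x,y,z$ survive, yields the same expression.

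With (i)--(iii) established in all three cases, Property~2 is immediate since $\alpha$ is a cubic polynomial by construction, and the nontrivial direction of Property~1 holds because the right-hand side of~(iii) is patently symmetric in $(x,y,z)$ (the converse of Property~1 is tautological from~(iii)). Existence in Property~3 is realized by the pair $(\alpha, f_\alpha)$. Uniqueness up to graded-algebra isomorphism is the subtlest point: if $f'$ has the same generating function $\alpha$, then $g := f' - f_\alpha$ is a symmetric $2$-cocycle on $\Z_2^n$ with $g(x,x)=0$, and any such $g$ can be absorbed by a sign change $u_x \mapsto \pm u_x$ of the homogeneous basis, producing an isomorphism of $\Z_2^n$-graded algebras; for this cohomological step I would refer to the argument of~\cite{MGO2011}.
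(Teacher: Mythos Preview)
Your proof is correct. The paper itself does not give a proof of this proposition: it states the result immediately after the explicit procedure~\eqref{ExplEq} and relies on the general theory developed in~\cite{MGO2011}, to which Properties~1--3 are explicitly attributed. Your monomial-by-monomial verification of conditions (i)--(iii) of Definition~\ref{defalpha} is exactly the kind of elementary check that the paper omits, and the computations in the linear, quadratic, and cubic cases are all right; in particular your identification of $\phi(x,y,z)$ in the cubic case with the six fully mixed monomials is correct, and the inclusion--exclusion argument for the seven-term alternating sum is the standard one.

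Two minor remarks. First, the phrasing ``$f_\alpha$ satisfies Properties~1, 2 and 3'' is slightly awkward in the paper, since those are general statements rather than properties of a single function; you have correctly interpreted it as meaning that $\alpha$ is the generating function of $(\bbK[\Z_2^n],f_\alpha)$, so that the existence part of Property~3 is realized and the symmetry of $\phi$ required by Property~1 is visible from~(iii). Second, your sketch of the uniqueness part of Property~3 (that a symmetric $2$-cocycle $g$ with $g(x,x)=0$ is a coboundary and hence can be absorbed by diagonal sign changes) is the right outline, but this cohomological fact is genuinely nontrivial over $\Z_2^n$; since you already defer to~\cite{MGO2011} for it, that is consistent with what the paper does.
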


\begin{rem}
Note that the procedure~(\ref{ExplEq}) is not the unique way to associate
the twisting function to a cubic form.
However, any other procedure would lead to an isomorphic algebra; see~\cite{MGO2011}.
\end{rem}

\subsection{The generating functions of $\bbO_{n}$ and  $\bbO_{p,q}$} 

The algebras $\bbO_{n}$ and  $\bbO_{p,q}$ have the following generating functions:
\begin{eqnarray*}
\label{Genn}
\alpha_{n} (x) &= &\sum_{1 \leq i < j < k \leq n}  x_i x_j x_k   + 
\sum_{1 \leq i  <j \leq n} x_i x_j + \sum_{1 \leq i \leq n }x_i,\\[4pt]
\label{Genpq}
\alpha_{p,q} (x) &=&\alpha_{n} (x) + \sum_{1\leq{}i\leq{}p} x_i . 
\end{eqnarray*}
The cubic form $\alpha_{n}$ of $\bbO_n$ is invariant under the action of
the group of permutations of the coordinates.
The value $\alpha_{n}(x)$ depends only on the weight (i.e. the number of nonzero components) 
of $x$.
More precisely, $\alpha_{n}(x)=0$ if and only if the weight of $x$ is congruent to $0$ modulo~$4$.  

\begin{rem}
In the case of Clifford algebra $\mathrm{Cl}_n$ or $\mathrm{Cl}_{p,q}$, the generating 
functions are the following quadratic form:
\begin{eqnarray*}
\alpha_{\mathrm{Cl}_n} (x) &=& 
\sum_{1 \leq i  < j \leq n} x_i x_j + \sum_{1 \leq i \leq n }x_i,\\[4pt]
\alpha_{\mathrm{Cl}_{p,q}} (x) &=& 
\alpha_{\mathrm{Cl}_n} (x) + \sum_{1 \leq i \leq p }x_i.
\end{eqnarray*}
This was also noticed in~\cite{MGO2011}.
\end{rem}

\subsection{The problem of equivalence} 

\begin{df}
Two cubic forms $\alpha$ and $\alpha'$ on $\bbZ^n_2$ are \emph{equivalent} 
if there exists a linear transformation $G\in\mathrm{GL}_n(\bbZ_2)$ such that
\[  \alpha(x) = \alpha'(G x).  \]
\end{df}

The main method that we use to establish isomorphisms between
twisted group algebras with generating functions is based on the fact that
two equivalent cubic forms give rise to isomorphic algebras. 
More precisely, one has the following statement 
which is an obvious corollary of the uniqueness of the generating function.

\begin{prop}
Given two twisted group algebras,~$(\bbK\left[  \bbZ_2^n \right],f)$ 
and~$(\bbK\left[  \bbZ_2^n \right],f')$ with equivalent generating functions $\alpha$ and $\alpha'$,
then these algebras are isomorphic as $\bbZ_2^n$-graded algebras.
\end{prop}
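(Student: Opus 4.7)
The plan is to reduce the claim to the uniqueness part of the trio of facts quoted as Statements~\ref{statement1}--\ref{statement3} from \cite{MGO2011}. Starting from the equivalence $\alpha(x)=\alpha'(Gx)$ with $G\in\mathrm{GL}_n(\Z_2)$, I would pull back the twisting function $f'$ along $G$ and define
\[
\tilde f(x,y):=f'(Gx,Gy),\qquad x,y\in\Z_2^n.
\]
The first step is to check that $\tilde f$ admits a generating function, and that this generating function is precisely $\alpha$. Since $G$ is linear, one computes directly that $\tilde f(x,x)=f'(Gx,Gx)=\alpha'(Gx)=\alpha(x)$, that $\tilde\beta(x,y)=\beta'(Gx,Gy)$, and that $\tilde\phi(x,y,z)=\phi'(Gx,Gy,Gz)$. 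Using Definition~\ref{defalpha} for $\alpha'$ and the linearity of $G$, each of the conditions (i)--(iii) of Definition~\ref{defalpha} for $\tilde f$ reduces to the corresponding condition for $f'$ evaluated on $Gx,Gy,Gz$, with the generating function being $\alpha'\circ G=\alpha$.

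Next, having established that $(\bbK[\Z_2^n],f)$ and $(\bbK[\Z_2^n],\tilde f)$ both carry the same generating function $\alpha$, I would invoke Statement~\ref{statement3} (the uniqueness up to isomorphism of the twisted group algebra associated with a cubic form) to conclude that these two algebras are isomorphic as $\Z_2^n$-graded algebras. Finally, I would exhibit an explicit $G$-twisted identification between $(\bbK[\Z_2^n],\tilde f)$ and $(\bbK[\Z_2^n],f')$ by the linear map $\Psi:u_x\mapsto u'_{Gx}$. The verification that $\Psi$ is an algebra homomorphism is a one-line check:
\[
\Psi(u_x)\cdot\Psi(u_y)=(-1)^{f'(Gx,Gy)}u'_{Gx+Gy}=(-1)^{\tilde f(x,y)}u'_{G(x+y)}=\Psi(u_x\cdot u_y),
\]
and $\Psi$ sends homogeneous elements to homogeneous elements since $G$ is a group automorphism of $\Z_2^n$, so $\Psi$ is an isomorphism of $\Z_2^n$-graded algebras in the sense used in the paper (homogeneous to homogeneous, with the grading permuted by $G$). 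Composing the two isomorphisms yields the required graded isomorphism between $(\bbK[\Z_2^n],f)$ and $(\bbK[\Z_2^n],f')$.

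There is no substantive obstacle here: the whole content lies in the uniqueness statement, which has already been granted, and the only thing to be slightly careful about is the notion of graded isomorphism being used. The subtle point is that a strict grading-preserving isomorphism cannot in general be expected when $G\neq\mathrm{Id}$; rather, the appropriate notion -- the one used throughout the paper and in Proposition~\ref{thmIso} -- permits the grading group to be twisted by an automorphism $G$ of $\Z_2^n$. Once this convention is adopted, the argument above is essentially a formal manipulation, which is why the authors describe the statement as an obvious corollary.
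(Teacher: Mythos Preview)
Your argument is correct and is exactly the fleshing-out of what the paper means by ``an obvious corollary of the uniqueness of the generating function'': pull back $f'$ along $G$, note that the pulled-back algebra has generating function $\alpha$, invoke Statement~\ref{statement3}, and compose with the relabeling $u_x\mapsto u'_{Gx}$. Your explicit discussion of the grading convention (isomorphisms send homogeneous elements to homogeneous elements, with the degree possibly permuted by $G$) is a useful clarification that the paper leaves implicit.
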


Let us mention that the general problem of classification of cubic forms on $\bbZ^n_2$
is an old open problem; see~\cite{Hou}.

\section {Periodicity} \label{Sectionclassification}

In this section, we formulate our main results in comparison with the classical results
about the Clifford algebras.
The proofs will be given in Section~\ref{subsectionisomorphisms}.
The main difference between the periodicity theorems that we obtain and the classical ones
is that all the periodicities for the algebras $ \bbO_{n} $ and $ \bbO_{p,q} $
are modulo $4$, whereas in the case of Clifford algebras the
simplest way to formulate the periodicity properties is
modulo $2$.

\subsection{Statement of the main theorem in the complex case} \label{resultcomplex}
Let us recall that for the complex Clifford algebras, one has the following simple
statement: 
$$
\mathrm{Cl}_{n+2}\simeq\mathrm{Cl}_n\otimes_\C\mathrm{Cl}_2.
$$
Note also that $\mathrm{Cl}_2$ is isomorphic to the algebra of complex $2\times2$-matrices.
Our first goal is to establish a similar result for the algebras $ \bbO_{n} $.

\begin{thm}
\label{thmcomplex}
If $n \geq 3 $, there is an isomorphism:
\begin{equation}
\label{FirstProd}
\bbO_{n+4}
\simeq 
\bbO_{n} \otimes_{\C^2}  \bbO_{5} .   
\end{equation}
\end{thm}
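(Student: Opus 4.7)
The natural strategy is to invoke the equivalence-of-cubic-forms proposition from Section~3. Both $\bbO_{n+4}$ and $\bbO_n \otimes_{\C^2} \bbO_5$ are naturally $\bbZ_2^{n+4}$-graded twisted group algebras, so it suffices to exhibit a linear map $G \in \mathrm{GL}_{n+4}(\bbZ_2)$ intertwining their generating cubic forms.

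First, I would make the $\bbZ_2^{n+4}$-graded structure of the right-hand side completely explicit. In each of $\bbO_n$ and $\bbO_5$, the top-degree element $u_{(1,\ldots,1)}$ has square $\pm 1$, computable from the generating function as $(-1)^{\alpha(1,\ldots,1)}$; over $\C$ it therefore generates a subalgebra isomorphic to $\C^2 \simeq \C \oplus \C$, with the two idempotents $\frac{1}{2}(1 \pm u_{(1,\ldots,1)})$ (after a sign adjustment by $i$ when the square equals $-1$). The $\C^2$ over which we tensor should be this subalgebra, and identifying these two copies of $\C^2$ amounts at the level of gradings to quotienting $\bbZ_2^n \oplus \bbZ_2^5$ by the diagonal element $((1,\ldots,1),(1,1,1,1,1))$. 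The generating cubic form $\tilde\alpha$ of the tensor product is the descent of $\alpha_n(y) + \alpha_5(z)$ to this $\bbZ_2^{n+4}$-quotient, presented by fixing a basis of the quotient, e.g.\ the images of $(e_i, 0)$ for $1 \leq i \leq n$ together with $(0, e_j)$ for $1 \leq j \leq 4$.

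Second, I would produce the linear change of variables $G \in \mathrm{GL}_{n+4}(\bbZ_2)$ such that $\alpha_{n+4}(Gx) = \tilde\alpha(x)$. A natural candidate absorbs the cross-terms that appear when $\alpha_{n+4}$ is partially resummed according to the split $(y,z)$ with $y \in \bbZ_2^n$ and $z \in \bbZ_2^4$: the identity on the $y$-coordinates, together with a lower-triangular adjustment of the $z$-coordinates by linear combinations of $y_1, \ldots, y_n$ dictated by the top-element identification. Verification reduces to a polynomial identity in $\bbZ_2[y_1,\ldots,y_n,z_1,\ldots,z_4]$ modulo the relations $y_i^2 = y_i$ and $z_j^2 = z_j$.

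The main obstacle is the explicit matching in this last step: the cubic cross-terms in $\alpha_{n+4}$ between $y$- and $z$-variables are numerous, and aligning them with $\tilde\alpha$ requires careful bookkeeping. Two features of $\alpha_n$ should drastically reduce the combinatorics: its invariance under permutations of coordinates, and the fact that $\alpha_n(x)$ depends only on the weight of $x$ modulo $4$. In particular, many cross-terms will naturally organize into expressions involving the single linear form $y_1 + \cdots + y_n$, which is precisely the ingredient encoding the identification of the two top elements generating the $\C^2$-subalgebras.
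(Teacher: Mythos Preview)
Your strategy of reducing to an equivalence of generating cubic forms is the right one, but you have misidentified the $\C^2$-subalgebra over which the tensor product is taken. As explained immediately after the theorem, the paper uses the subalgebra spanned by $\{1,u_1\}$ with $u_1=u_{e_1}$ the \emph{first generator}, not the top element $u_{(1,\ldots,1)}$. At the level of grading groups one therefore identifies $(e_1,0)$ with $(0,e_1)$ in $\Z_2^n\oplus\Z_2^5$, so that a natural coordinate system on the quotient $\Z_2^{n+4}$ is $(x_1,\ldots,x_n;x_{n+1},\ldots,x_{n+4})$ with the single coordinate $x_1$ \emph{shared} between the two factors. The cross-terms to be absorbed thus organise around $x_1$, not around the all-ones form $y_1+\cdots+y_n$. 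With your choice of top element, the function $\alpha_n(y)+\alpha_5(z)$ does not even descend to the quotient you name: for this one would need $\alpha_5(z+\mathbf{1})+\alpha_5(z)$ to be constant in $z$, but it equals $1$ at $z=0$ and $0$ at $z=e_1+e_2$.

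The paper's route is also structurally different from the direct change of variables you sketch. Rather than exhibiting a single $G\in\mathrm{GL}_{n+4}(\Z_2)$ taking $\alpha_{n+4}$ to the tensor-product form, it introduces an intermediate family of normal forms $\tilde\alpha_{p,q}$ whose very definition has the periodicity built in: each $\tilde\alpha_{p,q}$ is a collection of triangles glued at the common vertex $x_1$, and passing from $n$ to $n+4$ simply adjoins one more copy of the $\tilde\alpha_{0,5}$-triangle at that vertex. The actual work is then a case-by-case sequence of explicit coordinate transformations (four cases according to $n\bmod 4$, further split by parity of $k$) establishing $\alpha_{p,q}\simeq\tilde\alpha_{p,q}$. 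Once you correct the choice of $\C^2$, a direct transformation of the kind you envisage may exist, but the paper never writes one down; it goes through these normal forms instead.
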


Let us explain the tensor product over
 the algebra $\C^2\simeq\C\oplus\C$.
We fix a subalgebra of both above algebras, $\bbO_n$ and $\bbO_5$, isomorphic to $\C^2$: 
$$
\C^2\subset\bbO_{n}
\qquad\hbox{and}\qquad
\C^2\subset\bbO_{5}.
$$
For this end, we chose the generator $u_1=u_{e_1}$, see formula~(\ref{Gener}),
and consider the subalgebra~$\bbC^2$ with basis $\{1,u_1\}$.
Abusing the notation, we call the same name the generator $u_1\in\bbO_{n} $ and
the generator $u_1\in\bbO_{5} $.
We then identify the right action of~$\C^2$ on $\bbO_{n}$ and the left action of~$\C^2$ on $\bbO_{5}$.
In other words, we introduce the ideal $I$ of the algebra $\bbO_n \otimes_{\bbC} \bbO_5$ 
generated by the elements
$$
\left\{ a \cdot  u_1 \otimes_{\bbC}  b -  a \otimes_{\bbC} u_1 \cdot b \hspace{0.2cm}  | 
\hspace{0.1cm} a \in\bbO_{n}, b \in \bbO_{5}   \right\}.
$$ 
The tensor product~(\ref{FirstProd}) is then defined as the quotient by this ideal:
$$
\bbO_{n} \otimes_{\C^2}  \bbO_{5}:=
 \bigslant{ \left( \bbO_{n} \otimes_{\C}  \bbO_{5}   \right)} {I} .   
 $$
Let us stress on the fact that the defined tensor product is the common notion
of tensor product over a subalgebra.

\subsection{Statement of the main theorem in the real case} \label{resultreal}
In the real case, the result is different in the case of algebras
$\bbO_{p,q}$, where $p,q >0$, and in the case of the exceptional algebras 
$\bbO_{n,0}$ and $\bbO_{0,n}$. 
\begin{thm}
\label{thmreal}
If $n=p+q \geq 3$ and if $p>0$ and $q >0$ (except for $(p,q)=(1,4)$ and $(p,q)=(4,1)$) then there are the following isomorphisms of graded algebras:
\begin{eqnarray*}
\bbO_{0,n+4} &\simeq &  \bbO_{0,n} \otimes_{\C}  \bbO_{0,5} ;  \\[4pt]
\bbO_{n+4,0} &\simeq&  \bbO_{n,0} \otimes_{\bbR^2}  \bbO_{5,0} ;\\[4pt]
\bbO_{p+2,q+2} &\simeq&   \bbO_{p,q} \otimes_{\bbC}  \bbO_{2,3} \;
\simeq \bbO_{p,q} \otimes_{\bbR^2}  \bbO_{3,2} .
\end{eqnarray*}
\end{thm}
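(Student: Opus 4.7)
The overall plan is to apply the equivalence criterion of Section~3.4: two twisted group algebras are isomorphic as $\Z_2^n$-graded algebras whenever their generating cubic forms are $\mathrm{GL}_n(\Z_2)$-equivalent. For each of the three isomorphisms in the statement, the argument splits into two steps: (a) identify the tensor product on the right-hand side as a twisted group algebra on $\Z_2^{n+4}$ and write down its generating cubic form explicitly, and (b) exhibit an explicit $G \in \mathrm{GL}_{n+4}(\Z_2)$ that matches this form with $\alpha_{p',q'}$.

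For step (a), tensoring two twisted group algebras $(\bbK[\Z_2^n], f_1)$ and $(\bbK[\Z_2^m], f_2)$ over a common 1-generator subalgebra, obtained by identifying their first generators $u_1$, yields a twisted group algebra graded by $(\Z_2^n \oplus \Z_2^m)/\langle(e_1, e_1)\rangle \simeq \Z_2^{n+m-1}$. Choosing representatives $(x, y)$ with $y_1 = 0$ gives a basis $\{u_x \otimes u_y : y_1 = 0\}$ which is closed under multiplication, with generating function
$$\tilde\alpha(x, y') = \alpha_1(x) + \alpha_2(0, y_2, \ldots, y_m).$$
For the relevant factors, this produces $\alpha_{0,n}(x) + \alpha_{0,4}(y')$ (since $\alpha_{0,5}$ with $y_1=0$ reduces to $\alpha_{0,4}$), the corresponding $\alpha_{n,0}(x) + \alpha_{4,0}(y')$, and the analogous block-diagonal forms with mixed signatures for the $\bbO_{p+2,q+2}$ case.

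For step (b), the target form $\alpha_{p',q'}$ contains \emph{mixed} cross-terms $x_i x_j y'_k$, $x_i y'_j y'_k$ and $x_i y'_j$ absent from $\tilde\alpha$, so $G$ must mix the two blocks. A natural Ansatz is $w_i = x_i + L_i(y')$ for $i \leq n$ and $w_{n+j} = y'_j$, with $L_i$ linear in $y'_1, \ldots, y'_4$, tuned so that expanding $\alpha_{p',q'}(w)$ produces exactly the missing cross-terms while leaving the linear part consistent with the prescribed signature. The signatures match because tensoring over $\C$ (resp.\ $\bbR^2$) consumes one generator with $u^2 = -1$ (resp.\ $u^2 = +1$) in the second factor, which is why $\bbO_{2,3}$ and $\bbO_{3,2}$ both contribute $(2,2)$ to the final signature $(p+2, q+2)$.

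The main obstacle will be the case-by-case verification of the cubic-form identity: cubic, quadratic and linear pieces must match simultaneously in $\Z_2$, and the linear terms carry the signature data so are the most delicate. The exclusions $(p,q) = (1,4), (4,1)$ are explained by Proposition~\ref{thmIso}(ii), which gives $\bbO_{1,4} \simeq \bbO_{5,0}$ and $\bbO_{4,1} \simeq \bbO_{0,5}$ as graded algebras, so those cases are already covered by the first two (exceptional) isomorphisms of the theorem. Finally, the equivalence of the two versions of the third isomorphism follows a posteriori, since both tensor products are shown to be isomorphic to $\bbO_{p+2,q+2}$.
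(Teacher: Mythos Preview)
Your overall strategy --- reducing the isomorphism to a $\mathrm{GL}_{n+4}(\Z_2)$-equivalence of generating cubic forms --- is exactly the one the paper uses, and your step~(a) identification of the tensor product with the ``block-sum'' form $\alpha_1(x)+\alpha_2(0,y')$ is correct and matches what the paper relies on (implicitly) in Section~5.4.

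The genuine gap is in step~(b). Your Ansatz $w_i=x_i+L_i(y')$ for $i\le n$, $w_{n+j}=y'_j$, is too rigid to work. Consider the coefficient of the mixed cubic monomial $x_ax_by'_\ell$ (with $a<b\le n$) after substituting into $\alpha_{p',q'}$: the ``two $x$'s, one $y'$'' triples contribute~$1$, while the ``three $x$'s'' triples contribute $\sum_{m\neq a,b}c_{m,\ell}$, where $c_{m,\ell}$ is the coefficient of $y'_\ell$ in $L_m$. To kill this monomial you need $\sum_{m\neq a,b}c_{m,\ell}=1$ for \emph{every} pair $(a,b)$; comparing pairs forces all $c_{m,\ell}$ equal to some $c$, and then $(n-2)c\equiv 1\pmod 2$ has no solution when $n$ is even. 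So the block-triangular Ansatz already fails at the cubic level for half of the cases; you need substitutions that mix both blocks in both directions.

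The paper handles this not by a single clever transformation but by passing through an intermediate family of normal forms $\tilde\alpha_{p,q}$ (Section~5): each $\alpha_{p,q}$ is shown equivalent to a ``fan''-shaped form $\tilde\alpha_{p,q}$ in which every triangle shares the distinguished vertex $x_1$, and the recursion $\tilde\alpha_{p+2,q+2}=\tilde\alpha_{p,q}+\tilde\alpha_{2,3}(x_1,\cdot)+x_1$ then makes the periodicity automatic. The equivalences $\alpha_{p,q}\simeq\tilde\alpha_{p,q}$ are obtained case-by-case according to $n\bmod 4$ (Lemmas~6.1--6.4), with transformations that genuinely permute and sum many of the $n$ coordinates --- far from block-triangular. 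So the missing idea in your sketch is precisely this intermediate normal form; once you abandon the restricted Ansatz you are essentially forced into a computation of comparable complexity. Your explanation of the $(1,4)$, $(4,1)$ exclusion is also not quite the paper's: the exclusion arises because the recursive definition of $\tilde\alpha_{p+2,q+2}$ is not set up to start from those signatures, not because the resulting algebras fall under the first two isomorphisms.
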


To describe the above tensor products, chose the generator $u_1$ and
consider a $2$-dimensional subalgebra with basis $\{1,u_1\}$.
If $p=0$, then $u_1^2= -1$, and therefore this (real) subalgebra is isomorphic to $\C$.
If $q=0$, then $u_1^2= 1$, and the chosen subalgebra is isomorphic to $\bbR^2\simeq\bbR \oplus \bbR$.
If $p>0$ and $q>0$, then one can chose $u_1^2= -1$ or $u_1^2= 1$,
the corresponding subalgebra is then isomorphic to $\C$ or $\R^2$, respectively.

In order to compare the above theorem with the classical results for the Clifford algebras,
we recall following periodicities:
\begin{eqnarray*}
 \mathrm{Cl}_{p+2,q} &\simeq&  \mathrm{Cl}_{q,p}\otimes_{\bbR}\mathrm{Cl}_{2,0}  ,\\[4pt]
\mathrm{Cl}_{p,q+2}&\simeq&   \mathrm{Cl}_{q,p}\otimes_{\bbR}\mathrm{Cl}_{0,2} ,\\[4pt]
\label{LastProd}
\mathrm{Cl}_{p+1,q+1} &\simeq&  \mathrm{Cl}_{p,q}\otimes_{\bbR}\mathrm{Cl}_{1,1} .
\end{eqnarray*}
This in particular implies
$$
 \mathrm{Cl}_{p+8,q} 
 \simeq  
 \mathrm{Cl}_{p+4,q+4}\simeq\mathrm{Cl}_{p,q+8}
 \simeq
 \mathrm{Cl}_{p,q}\otimes_{\bbR}\mathrm{Mat}_{16}(\R),
$$
known as the Bott periodicity.

\subsection{How to use the generating function}
\label{How to use the generating function}

In order to illustrate our method and the role of generating functions,
let us give two simple proofs of the classical isomorphisms
$\mathrm{Cl}_{p+2,q} \simeq \mathrm{Cl}_{2,0}\otimes_{\bbR}\mathrm{Cl}_{q,p}$ and $\mathrm{Cl}_{p,q+2} \simeq \mathrm{Cl}_{q,p}\otimes_{\bbR}\mathrm{Cl}_{0,2}$.

The algebras $\mathrm{Cl}_{2,0}\otimes_{\bbR}\mathrm{Cl}_{q,p} $ and $\mathrm{Cl}_{q,p}\otimes_{\bbR}\mathrm{Cl}_{0,2} $
have respectively the following generating functions:
$$
\alpha(x)=x_1 x_2 + \sum_{3 \leq i\leq j \leq n +2} x_i x_j + \sum_{p+3\leq  i \leq n+2} x_i,
$$
and  
$$
\alpha'(x)= \sum_{1 \leq i\leq j \leq n } x_i x_j  + \sum_{p+1 \leq i\leq n} x_i +x_{n+1} x_{n+2} + x_{n+1} + x_{n+2}.
$$
It is easy to check that
the coordinate transformations
\begin{align}
\label{CliffordPeriodicityTransfo}
x'_1& = x _1 + x_3 + \ldots + x_{n+2},&& &  x_i'&=x_i, \; i \leq n, \nonumber \\
x'_2& = x _2 + x_{3} + \ldots + x_{n+2}, & &\mbox{and}&  x'_{n+1}& = x _1 + \ldots + x_{n+1}, \\
x_i'&=x_i, \; i>2,  &&& x'_{n+2}& = x _1 + \ldots + x_{n}+  x_{n+2}.  \nonumber
\end{align} 
send respectively $\alpha$ and $\alpha ' $ to the generating quadratic form of $\mathrm{Cl}_{p+2,q}$ and of $\mathrm{Cl}_{p,q+2}$. The last periodicity statement 
for the Clifford algebras, i.e. $\mathrm{Cl}_{p+1,q+1} \simeq \mathrm{Cl}_{1,1}\otimes_{\bbR}\mathrm{Cl}_{q,p}$ can be proved in a similar way.

\section{Triangulated graphs}

In this section, we present a way to interpret a cubic form on $\Z_2^n$
in term of a triangulated graph\footnote{
I am grateful to V. Ovsienko who explained me this method.} and reformulate our main results. 
This will allow us to find the simplest equivalent normal forms for the
cubic forms~$\alpha_{p,q}$, for which the periodicity statements are very
transparent.

\subsection{The definition} \label{GRAPSECT}

Consider an arbitrary cubic form on $\Z_2^n$:
$$
\alpha(x)=
\sum_{1 \leq i<{}j<{}k \leq n }
A_{ijk}\,x_ix_jx_k+
\sum_{1 \leq i<{}j \leq n }
B_{ij}\,x_ix_j+
\sum_{1\leq{} i \leq{}n}
C_{i}\,x_i.
$$
Note that this is precisely the form~(\ref{AlForm})
by we separate the terms for which some of the indices coincide.
We will associate a triangulated graph to every such function.
The definition is as follows.

\begin{df}
Given a cubic form $\alpha$, the corresponding triangulated graph is as follows.

\begin{enumerate}
\item

The set vertices of the graph coincides with the set $\{x_1,x_2,\ldots,x_n\}$.
Write $\bullet$ if $C_{i}=1$ and $\circ$ if $C_{i}=0$. 

\item

Two distinct vertices, $i$ and $j$, are joined by an edge if $B_{ij}=1$.

\item

Join by a triangle
\begin{tikzpicture}[scale=0.6]
\fill [color=gray!40] (0,0) -- (20:1) -- (340:1) -- cycle;
\draw [ultra thin] (0,0) -- (20: 1) ;
\draw [ultra thin] (0,0) -- (340: 1) ;
\draw [ultra thin] (20:1) -- (340: 1) ;
\end{tikzpicture} 
those (distinct) vertices $i,j,k$ for which $A_{ijk}=1$. 

\end{enumerate}
\end{df}

Note that the defined triangulated graph completely characterizes the
cubic form. 

\begin{ex}
Let us give elementary examples in the $2$-dimensional case.  
   
   \begin{enumerate}
\item

The first interesting case is that of the classical algebra of quaternions $\bbH$.
The quadratic form and the corresponding graph are as follows.
\\
   \begin{minipage}{0.65\textwidth}
         \begin{eqnarray*}
    {\alpha}^{\mathrm{Cl}}_{0,2} (x_1, x_2) &=&  x_1x_2  + x_1 + x_2 ,
      \end{eqnarray*}
   \end{minipage}
   \begin{minipage}{0.25\textwidth}
\begin{tikzpicture}[scale=1.2]
\tikzstyle{sommetNO}=[shape=circle,fill=white,draw=black,minimum size=0.5pt, inner sep=1.5pt]
\draw [ultra thick] (0,0) -- (0: 1) ;
\draw (0,0) node {$\bullet$};
\draw (0: 1) node {$\bullet$};
\draw (180: 1.5) node {$\longleftrightarrow$};
\draw (0:0) node[left] {\small{$x_1$}};
\draw (0:1) node[right] {\small{$x_2$}};
\end{tikzpicture} 
   \end{minipage}
 
\item
      The other interesting case is that the Clifford algebra $\mathrm{Cl}_{2,0}$.
The quadratic form and the corresponding graph are as follows.
\\
   \begin{minipage}{0.65\textwidth}
         \begin{eqnarray*}
    {\alpha}^{\mathrm{Cl}}_{2,0} (x_1, x_2) &=&  x_1x_2 ,
      \end{eqnarray*}
   \end{minipage}
   \begin{minipage}{0.25\textwidth}
\begin{tikzpicture}[scale=1.2]
\tikzstyle{sommetNO}=[shape=circle,fill=white,draw=black,minimum size=0.5pt, inner sep=1.5pt]
\node[draw=white, sommetNO] (X1) at (0,0) {};
\node[draw=white, sommetNO] (X2) at (0:1) {};
\draw [ultra thick] (X1) -- (X2) ;
\draw (180: 1.5) node {$\longleftrightarrow$};
\draw (X1) node[left] {\small{$x_1$}};
\draw (X2) node[right] {\small{$x_2$}};
\end{tikzpicture} 
   \end{minipage}
   
      \end{enumerate}
   
 \end{ex}  

\begin{ex}
Let us give several examples in the $3$-dimensional case.

\begin{enumerate}
\item
The first interesting case is that of the classical algebra of octonions $\bbO$.
The cubic form and the corresponding graph are as follows.
\\
   \begin{minipage}{0.65\textwidth}
         \begin{eqnarray*}
    {\alpha}_{0,3} (x_1, x_2, x_3) &=&  x_1x_2x_3 + x_1x_2 + x_1 x_3 + x_2 x_3 
    \\&& + x_1 + x_2 + x_3,
      \end{eqnarray*}
   \end{minipage}
   \begin{minipage}{0.25\textwidth}
\begin{tikzpicture}[scale=1.2]
\fill [color=gray!40] (0,0) -- (20:1) -- (340:1) -- cycle;
\draw [ultra thick] (0,0) -- (20: 1) ;
\draw [ultra thick] (0,0) -- (340: 1) ;
\draw [ultra thick] (20:1) -- (340: 1) ;
\draw (0,0) node {$\bullet$};
\draw (20: 1) node {$\bullet$};
\draw (340: 1) node {$\bullet$};
\draw (180: 1.5) node {$\longleftrightarrow$};
\draw (0:0) node[left] {\small{$x_1$}};
\draw (20:1) node[right] {\small{$x_2$}};
\draw (340:1) node[right] {\small{$x_3$}};
\end{tikzpicture} 
   \end{minipage}
   
   \medskip
   
\noindent
Amazingly, the above triangle contains the full information about the cubic form 
$ {\alpha}_{0,3}$ and therefore about the algebra $\bbO$.

\item

The algebra of split octonions has the following cubic form:
\\
   \begin{minipage}{0.65\textwidth}
         \begin{eqnarray*}
    {\alpha}_{1,2} (x_1, x_2, x_3) &=&  x_1x_2x_3 + x_1x_2 + x_1 x_3 + x_2 x_3 
    \\&&  + x_2 + x_3,
   \end{eqnarray*}
   \end{minipage}
   \begin{minipage}{0.25\textwidth}
\begin{tikzpicture}[scale=1.2]
\tikzstyle{sommetNO}=[shape=circle,fill=white,draw=black,minimum size=0.5pt, inner sep=1.5pt]
\fill [color=gray!40] (0,0) -- (20:1) -- (340:1) -- cycle;
\node[draw=white, sommetNO] (X1) at (0,0) {};
\draw [ultra thick] (X1) -- (20: 1) ;
\draw [ultra thick] (X1) -- (340: 1) ;
\draw [ultra thick] (20:1) -- (340: 1) ;
\draw (20: 1) node {$\bullet$};
\draw (340: 1) node {$\bullet$};
\draw (180: 1.5) node {$\longleftrightarrow$};
\draw (0:0) node[left] {\small{$x_1$}};
\draw (20:1) node[right] {\small{$x_2$}};
\draw (340:1) node[right] {\small{$x_3$}};
\end{tikzpicture} 
   \end{minipage}
   
   \medskip
 
 \item
 
 The ``trivial example'':
 \\
   \begin{minipage}{0.65\textwidth}
         \begin{eqnarray*}
    {\alpha} (x_1, x_2, x_3) &\equiv&  0 
      \end{eqnarray*}
   \end{minipage}
   \begin{minipage}{0.25\textwidth}
\begin{tikzpicture}[scale=1.2]
\tikzstyle{sommetNO}=[shape=circle,fill=white,draw=black,minimum size=0.5pt, inner sep=1.5pt]

\node[draw=white, sommetNO] (X1) at (0,0) {};
\node[draw=white, sommetNO] (X2) at (20:1) {};
\node[draw=white, sommetNO] (X3) at (340:1) {};

\draw (180: 1.5) node {$\longleftrightarrow$};
\draw (0:0) node[left] {\small{$x_1$}};
\draw (20:1) node[right] {\small{$x_2$}};
\draw (340:1) node[right] {\small{$x_3$}};
\end{tikzpicture} 
   \end{minipage}

\end{enumerate}
\end{ex}

\subsection{The forms $\tilde{\alpha}_{0,n}$ and $\tilde{\alpha}_{n,0}$}

Let us now introduce a series of cubic forms $ \tilde{\alpha}_{p,q}$.
We will prove in Section~\ref{subsectionisomorphisms} 
that they are equivalent to the forms ${\alpha}_{p,q}$.
The advantage of this new way to represent the cubic forms ${\alpha}_{p,q}$
consists in the fact that the corresponding graphs are very simple.
The periodicity properties of the algebras $\bbO_{n}$ and  $\bbO_{p,q}$
can be seen directly from the graphs.

Let us start with the case of signature $(0,n)$.

   \begin{df}
   The cubic forms $\tilde{\alpha}_{0,n}$ are defined as follows.
   
   \begin{enumerate}
   
   \item
   $\tilde{\alpha}_{0,3}={\alpha}_{0,3}$.
   \item
   The next cases are:
\\
   \begin{minipage}{0.65\textwidth}
       \begin{eqnarray*}
       \tilde{\alpha}_{0,4}(x_1,x_2,x_3,x_4)& =&  x_1x_3x_4 + x_1x_3 + x_1 x_4 + x_3 x_4 \\
     &&   + x_1 + x_3 + x_4,
	\end{eqnarray*}
   \end{minipage}
   \begin{minipage}{0.20\textwidth}
\begin{tikzpicture}[scale=1.2]
\tikzstyle{sommetNO}=[shape=circle,fill=white,draw=black,minimum size=0.5pt, inner sep=1.5pt]
\fill [color=gray!40] (0,0) -- (20:1) -- (340:1) -- cycle;
\draw [ultra thick] (0,0) -- (20: 1) ;
\draw [ultra thick] (0,0) -- (340: 1) ;
\draw [ultra thick] (20:1) -- (340: 1) ;
\draw (0,0) node {$\bullet$};
\draw (20: 1) node {$\bullet$};
\draw (340: 1) node {$\bullet$};
\draw (160: 1) node[sommetNO]{};
\draw (180: 2) node {$\longleftrightarrow$};
\draw (0:0) node[left] {\small{$x_1$}};
\draw (20:1) node[right] {\small{$x_4$}};
\draw (340:1) node[right] {\small{$x_3$}};
\draw (160: 1) node[below] {\small{$x_2$}};
\end{tikzpicture} 
   \end{minipage}
\\
   \begin{minipage}{0.65\textwidth}
   \begin{eqnarray*}
\tilde{\alpha}_{0,5}(x_1,\ldots ,x_5)&=&
x_1 x_2 x_3 + x_1 x_4 x_5 +  x_2 x_3 + x_1 x_4 \\
&& + x_1 x_5 + x_4 x_5 + x_1 + x_4 + x_5 ,
\end{eqnarray*}
   \end{minipage}
   \begin{minipage}{0.25\textwidth}
\begin{tikzpicture}[scale=1.2]
\tikzstyle{sommetNO}=[shape=circle,fill=white,draw=black,minimum size=0.5pt, inner sep=1.5pt]
\tikzstyle{sommetYES}=[shape=circle,fill=black,draw=black,minimum size=0.5pt, inner sep=1.5pt]
\fill [color=gray!40] (0,0) -- (20:1) -- (340:1) -- cycle;
\fill [color=gray!40] (0,0) -- (160:1) -- (200:1) -- cycle;
\tikzstyle{everypath}=[color=black, line width= ultra thick]
\node[draw=white, sommetNO] (X2) at (200:1) {};
\node[draw=white, sommetNO] (X3) at (160:1) {};
\draw[ultra thin] (0:0) node [sommetYES]{} -- (200:1) node[sommetNO]{};
\draw[ultra thin] (0:0) node [sommetYES]{} -- (160:1) node[sommetNO]{};
\draw [ultra thick] (X2)  -- (X3);
\draw [ultra thick] (0,0) -- (20: 1) ;
\draw [ultra thick] (0,0) -- (340: 1) ;
\draw [ultra thick] (20:1) -- (340: 1) ;
\draw (20: 1) node {$\bullet$};
\draw (340: 1) node {$\bullet$};

\draw (180: 2) node {$\longleftrightarrow$};
\draw (20:1) node[right] {\small{$x_5$}};
\draw (340: 1) node[right] {\small{$x_4$}};
\draw (200:1) node[left] {\small{$x_2$}};
\draw (160: 1) node[left] {\small{$x_3$}};
\draw (0: 0) node[below] {\small{$x_1$}};
\end{tikzpicture} 
   \end{minipage}
\\
   \begin{minipage}{0.65\textwidth}
      \begin{eqnarray*}
\tilde{\alpha}_{0,6}(x_1,\ldots ,x_6)& =& 
x_1 x_3 x_4 + x_1 x_5 x_6 + x_1 x_2+  x_3 x_4 + x_1 x_5 
\\ &&+ x_1 x_6 + x_5 x_6 + x_1 + x_2 +  x_5 + x_6 .
\end{eqnarray*}
   \end{minipage}
      \begin{minipage}{0.25\textwidth}
\begin{tikzpicture}[scale=1.2]
\tikzstyle{sommetNO}=[shape=circle,fill=white,draw=black,minimum size=0.5pt, inner sep=1.5pt]
\fill [color=gray!40] (0,0) -- (20:1) -- (340:1) -- cycle;
\fill [color=gray!40] (0,0) -- (160:1) -- (200:1) -- cycle;
\node[draw=white, sommetNO] (X3) at (200:1) {};
\node[draw=white, sommetNO] (X4) at (160:1) {};
\draw [ultra thick] (0,0) -- (20: 1) ;
\draw [ultra thick] (0,0) -- (340: 1) ;
\draw [ultra thick] (20:1) -- (340: 1) ;
\draw [ultra thin](0,0) -- (X3) ;
\draw  [ultra thin] (0,0) -- (X4) ;
\draw  [ultra thick] (X4)-- (X3) ;
\draw (0,0) node {$\bullet$};
\draw (20: 1) node {$\bullet$};
\draw (340: 1) node {$\bullet$};
\draw  [ultra thick] (0:0) -- (90: 0.5) ;
\draw (90: 0.5) node {$\bullet$};
\draw (180: 2) node {$\longleftrightarrow$};
\draw (20:1) node[right] {\small{$x_6$}};
\draw (340: 1) node[right] {\small{$x_5$}};
\draw (200:1) node[left] {\small{$x_3$}};
\draw (160: 1) node[left] {\small{$x_4$}};
\draw (0: 0) node[below] {\small{$x_1$}};
\draw (90: 0.5) node[above] {\small{$x_2$}};
\end{tikzpicture} 
   \end{minipage}
 
 \item
 In general, we have the following.  
\begin{eqnarray*}
 \tilde{\alpha}_{0,4k+3} (x_1, \ldots , x_{4+3k})  &=& 
   \tilde{\alpha}_{0,3} (x_1,x_2,  x_3) + 
   \sum_{i=1}^{k}     \left( x_1+\tilde{\alpha}_{0,5} (x_1, x_{4i},  \ldots, x_{4i+3}) \right) ,\\
 \tilde{\alpha}_{0,4k} (x_1, \ldots , x_{4k})  &=& 
   \tilde{\alpha}_{0,4} (x_1, \ldots , x_{4}) + 
   \sum_{i=1}^{k-1}  \left(   x_1 + \tilde{\alpha}_{0,5} (x_1, x_{4i+1} , \ldots, x_{4i+4}) \right), \\
 \tilde{\alpha}_{0,4k+1} (x_1, \ldots , x_{4k+1})  &=&  \tilde{\alpha}_{0,5} (x_1, \ldots , x_{5})  + 
\sum_{i=1}^{k-1}   \left(    x_1+ \tilde{\alpha}_{0,5} (x_1, x_{4i+2} , \ldots , x_{4i+5}) \right),   \\
 \tilde{\alpha}_{0,4k+2} (x_1, \ldots , x_{4k+2})  &=& 
   \tilde{\alpha}_{0,6} (x_1,  \ldots , x_6) +
    \sum_{i=1}^{k-1}  \left( x_1+ \tilde{\alpha}_{0,5} (x_1, x_{4i+3} ,  \ldots , x_{4i+6})  \right). 
\end{eqnarray*}

 \end{enumerate}
\end{df}

      The table below gives a series of examples of
      defined cubic forms. 
        
        \medskip
 
\begin{tabular}{ccc}
\begin{tikzpicture}[scale=1.2]
\fill [color=gray!40] (0,0) -- (20:1) -- (340:1) -- cycle;
\draw [ultra thick] (0,0) -- (20: 1) ;
\draw [ultra thick] (0,0) -- (340: 1) ;
\draw [ultra thick] (20:1) -- (340: 1) ;
\draw (0,0) node {$\bullet$};
\draw (20: 1) node {$\bullet$};
\draw (340: 1) node {$\bullet$};
\draw (180: 0.8) node {$\tilde{\alpha}_{0,3}$};
\end{tikzpicture} 
&
\begin{tikzpicture}[scale=1.2]
\tikzstyle{sommetNO}=[shape=circle,fill=white,draw=black,minimum size=0.5pt, inner sep=1.5pt]
\fill [color=gray!40] (0,0) -- (10:1) -- (50:1) -- cycle;
\fill [color=gray!40] (0,0) -- (170:1) -- (130:1) -- cycle;
\fill [color=gray!40] (0,0) -- (250:1) -- (290:1) -- cycle;
\node[draw=white, sommetNO] (X3) at (250:1) {};
\node[draw=white, sommetNO] (X2) at (290:1) {};
\draw [ultra thick] (0,0) -- (10: 1) ;
\draw [ultra thick] (0,0) -- (50: 1) ;
\draw [ultra thick] (10:1) -- (50: 1) ;
\draw [ultra thick] (0,0) -- (130: 1) ;
\draw [ultra thick] (0,0) -- (170: 1) ;
\draw [ultra thick] (130:1) -- (170: 1) ;
\draw [ultra thin](0,0) -- (X3) ;
\draw  [ultra thin] (0,0) -- (X2) ;
\draw  [ultra thick] (X2) -- (X3) ;
\draw (0,0) node {$\bullet$};
\draw (10: 1) node {$\bullet$};
\draw (50: 1) node {$\bullet$};
\draw (130: 1) node {$\bullet$};
\draw (170: 1) node {$\bullet$};
\draw (180: 1.8) node {$\tilde{\alpha}_{0,7}$};
\end{tikzpicture} 
&
\begin{tikzpicture}[scale=1.2]
\tikzstyle{sommetNO}=[shape=circle,fill=white,draw=black,minimum size=0.5pt, inner sep=1.5pt]
\fill [color=gray!40] (0,0) -- (34:1) -- (74:1) -- cycle;
\fill [color=gray!40] (0,0) -- (106:1) -- (146:1) -- cycle;
\fill [color=gray!40] (0,0) -- (322:1) -- (2:1) -- cycle;
\fill [color=gray!40] (0,0) -- (178:1) -- (218:1) -- cycle;
\fill [color=gray!40] (0,0) -- (250:1) -- (290:1) -- cycle;
\node[draw=white, sommetNO] (X3) at (34:1) {};
\node[draw=white, sommetNO] (X2) at (74:1) {};
\node[draw=white, sommetNO] (X4) at (106:1) {};
\node[draw=white, sommetNO] (X5) at (146:1) {};
\draw (0: 0) node {$\bullet$};
\draw [ultra thick] (0,0) -- (2: 1) ;
\draw [ultra thick] (0,0) -- (322: 1) ;
\draw [ultra thick] (2:1) -- (322: 1) ;
\draw (322: 1) node {$\bullet$};
\draw (2: 1) node {$\bullet$};
\draw [ultra thin](0,0) -- (X2) ;
\draw  [ultra thin] (0,0) -- (X3) ;
\draw  [ultra thick] (X2) -- (X3) ;
\draw [ultra thin] (0,0) -- (X5) ;
\draw [ultra thin] (0,0) -- (X4) ;
\draw [ultra thick] (X4) -- (X5) ;
\draw [ultra thick](0,0) -- (178: 1) ;
\draw  [ultra thick] (0,0) -- (218: 1) ;
\draw  [ultra thick] (178:1) -- (218: 1) ;
\draw (218: 1) node {$\bullet$};
\draw (178: 1) node {$\bullet$};
\draw [ultra thick](0,0) -- (250: 1) ;
\draw  [ultra thick] (0,0) -- (290: 1) ;
\draw  [ultra thick] (250:1) -- (290: 1) ;
\draw (250: 1) node {$\bullet$};
\draw (290: 1) node {$\bullet$};
\draw (180: 1.8) node {$\tilde{\alpha}_{0,11}$};
\end{tikzpicture} 
 \rule[-15pt]{0pt}{40pt}
\\
\begin{tikzpicture}[scale=1.2]
\tikzstyle{sommetNO}=[shape=circle,fill=white,draw=black,minimum size=0.5pt, inner sep=1.5pt]
\fill [color=gray!40] (0,0) -- (20:1) -- (340:1) -- cycle;
\draw [ultra thick] (0,0) -- (20: 1) ;
\draw [ultra thick] (0,0) -- (340: 1) ;
\draw [ultra thick] (20:1) -- (340: 1) ;
\draw (0,0) node {$\bullet$};
\draw (20: 1) node {$\bullet$};
\draw (340: 1) node {$\bullet$};
\draw (90: 0.5) node [sommetNO]{};
\draw (180: 0.8) node {$\tilde{\alpha}_{0,4}$};
\end{tikzpicture} 
&
\begin{tikzpicture}[scale=1.2]
\tikzstyle{sommetNO}=[shape=circle,fill=white,draw=black,minimum size=0.5pt, inner sep=1.5pt]
\fill [color=gray!40] (0,0) -- (10:1) -- (50:1) -- cycle;
\fill [color=gray!40] (0,0) -- (170:1) -- (130:1) -- cycle;
\fill [color=gray!40] (0,0) -- (250:1) -- (290:1) -- cycle;
\node[draw=white, sommetNO] (X3) at (250:1) {};
\node[draw=white, sommetNO] (X2) at (290:1) {};
\draw [ultra thick] (0,0) -- (10: 1) ;
\draw [ultra thick] (0,0) -- (50: 1) ;
\draw [ultra thick] (10:1) -- (50: 1) ;
\draw [ultra thick] (0,0) -- (130: 1) ;
\draw [ultra thick] (0,0) -- (170: 1) ;
\draw [ultra thick] (130:1) -- (170: 1) ;
\draw [ultra thin](0,0) -- (X2) ;
\draw  [ultra thin] (0,0) -- (X3) ;
\draw  [ultra thick] (X2) -- (X3) ;
\draw (0,0) node {$\bullet$};
\draw (10: 1) node {$\bullet$};
\draw (50: 1) node {$\bullet$};
\draw (130: 1) node {$\bullet$};
\draw (170: 1) node {$\bullet$};
\draw (90: 0.7) node [sommetNO]{};
\draw (180: 1.8) node {$\tilde{\alpha}_{0,8}$};
\end{tikzpicture} 
&
\begin{tikzpicture}[scale=1.2]
\tikzstyle{sommetNO}=[shape=circle,fill=white,draw=black,minimum size=0.5pt, inner sep=1.5pt]
\fill [color=gray!40] (0,0) -- (34:1) -- (74:1) -- cycle;
\fill [color=gray!40] (0,0) -- (106:1) -- (146:1) -- cycle;
\fill [color=gray!40] (0,0) -- (322:1) -- (2:1) -- cycle;
\fill [color=gray!40] (0,0) -- (178:1) -- (218:1) -- cycle;
\fill [color=gray!40] (0,0) -- (250:1) -- (290:1) -- cycle;

\node[draw=white, sommetNO] (X3) at (34:1) {};
\node[draw=white, sommetNO] (X2) at (74:1) {};
\node[draw=white, sommetNO] (X4) at (106:1) {};
\node[draw=white, sommetNO] (X5) at (146:1) {};
\draw (0: 0) node {$\bullet$};
\draw [ultra thick] (0,0) -- (2: 1) ;
\draw [ultra thick] (0,0) -- (322: 1) ;
\draw [ultra thick] (2:1) -- (322: 1) ;
\draw (322: 1) node {$\bullet$};
\draw (2: 1) node {$\bullet$};
\draw [ultra thin](0,0) -- (X2) ;
\draw  [ultra thin] (0,0) -- (X3) ;
\draw  [ultra thick] (X2) -- (X3) ;
\draw [ultra thin] (0,0) -- (X5) ;
\draw [ultra thin] (0,0) -- (X4) ;
\draw [ultra thick] (X4) -- (X5) ;
\draw [ultra thick](0,0) -- (178: 1) ;
\draw  [ultra thick] (0,0) -- (218: 1) ;
\draw  [ultra thick] (178:1) -- (218: 1) ;
\draw (218: 1) node {$\bullet$};
\draw (178: 1) node {$\bullet$};
\draw [ultra thick](0,0) -- (250: 1) ;
\draw  [ultra thick] (0,0) -- (290: 1) ;
\draw  [ultra thick] (250:1) -- (290: 1) ;
\draw (250: 1) node {$\bullet$};
\draw (290: 1) node {$\bullet$};
\draw (90: 0.75) node [sommetNO]{};
\draw (180: 1.8) node {$\tilde{\alpha}_{0,12}$};
\end{tikzpicture} 
 \rule[-15pt]{0pt}{40pt}
\\
\begin{tikzpicture}[scale=1.2]
\tikzstyle{sommetNO}=[shape=circle,fill=white,draw=black,minimum size=0.5pt, inner sep=1.5pt]
\fill [color=gray!40] (0,0) -- (20:1) -- (340:1) -- cycle;
\fill [color=gray!40] (0,0) -- (160:1) -- (200:1) -- cycle;
\node[draw=white, sommetNO] (X3) at (160:1) {};
\node[draw=white, sommetNO] (X2) at (200:1) {};
\draw [ultra thick] (0,0) -- (20: 1) ;
\draw [ultra thick] (0,0) -- (340: 1) ;
\draw [ultra thick] (20:1) -- (340: 1) ;
\draw [ultra thin](0,0) -- (X2) ;
\draw  [ultra thin] (0,0) -- (X3) ;
\draw  [ultra thick] (X2) -- (X3) ;
\draw (0,0) node {$\bullet$};
\draw (20: 1) node {$\bullet$};
\draw (340: 1) node {$\bullet$};
\draw (180: 1.8) node {$\tilde{\alpha}_{0,5}$};
\end{tikzpicture} 
&
\begin{tikzpicture}[scale=1.2]
\tikzstyle{sommetNO}=[shape=circle,fill=white,draw=black,minimum size=0.5pt, inner sep=1.5pt]
\fill [color=gray!40] (0,0) -- (20:1) -- (340:1) -- cycle;
\fill [color=gray!40] (0,0) -- (160:1) -- (200:1) -- cycle;
\fill [color=gray!40] (0,0) -- (250:1) -- (290:1) -- cycle;
\fill [color=gray!40] (0,0) -- (110:1) -- (70:1) -- cycle;
\node[draw=white, sommetNO] (X3) at (70:1) {};
\node[draw=white, sommetNO] (X2) at (110:1) {};
\node[draw=white, sommetNO] (X4) at (160:1) {};
\node[draw=white, sommetNO] (X5) at (200:1) {};
\draw [ultra thin](0,0) -- (X2) ;
\draw  [ultra thin] (0,0) -- (X3) ;
\draw  [ultra thick] (X2) -- (X3) ;
\draw [ultra thin](0,0) -- (X4) ;
\draw  [ultra thin] (0,0) -- (X5) ;
\draw  [ultra thick] (X4) -- (X5) ;
\draw [ultra thick] (0,0) -- (20: 1) ;
\draw [ultra thick] (0,0) -- (340: 1) ;
\draw [ultra thick] (20:1) -- (340: 1) ;
\draw (0,0) node {$\bullet$};
\draw (20: 1) node {$\bullet$};
\draw (340: 1) node {$\bullet$};
\draw [ultra thick](0,0) -- (250: 1) ;
\draw  [ultra thick] (0,0) -- (290: 1) ;
\draw  [ultra thick] (250:1) -- (290: 1) ;
\draw (250: 1) node {$\bullet$};
\draw (290: 1) node {$\bullet$};
\draw (180: 1.8) node {$\tilde{\alpha}_{0,9}$};
\end{tikzpicture} 
&
\begin{tikzpicture}[scale=1.2]
\tikzstyle{sommetNO}=[shape=circle,fill=white,draw=black,minimum size=0.5pt, inner sep=1.5pt]
\fill [color=gray!40] (0,0) -- (40:1) -- (80:1) -- cycle;
\fill [color=gray!40] (0,0) -- (140:1) -- (100:1) -- cycle;
\fill [color=gray!40] (0,0) -- (160:1) -- (200:1) -- cycle;
\fill [color=gray!40] (0,0) -- (20:1) -- (340:1) -- cycle;
\fill [color=gray!40] (0,0) -- (220:1) -- (260:1) -- cycle;
\fill [color=gray!40] (0,0) -- (320:1) -- (280:1) -- cycle;
\node[draw=white, sommetNO] (X3) at (40:1) {};
\node[draw=white, sommetNO] (X2) at (80:1) {};
\node[draw=white, sommetNO] (X4) at (100:1) {};
\node[draw=white, sommetNO] (X5) at (140:1) {};
\node[draw=white, sommetNO] (X6) at (160:1) {};
\node[draw=white, sommetNO] (X7) at (200:1) {};
\draw [ultra thin](0,0) -- (X2) ;
\draw  [ultra thin] (0,0) -- (X3) ;
\draw  [ultra thick] (X2) -- (X3) ;
\draw [ultra thin](0,0) -- (X4) ;
\draw  [ultra thin] (0,0) -- (X5) ;
\draw  [ultra thick] (X4) -- (X5) ;
\draw [ultra thin](0,0) -- (X6) ;
\draw  [ultra thin] (0,0) -- (X7) ;
\draw  [ultra thick] (X6) -- (X7) ;
\draw (0,0) node {$\bullet$};
\draw [ultra thick] (0,0) -- (20: 1) ;
\draw [ultra thick] (0,0) -- (340: 1) ;
\draw [ultra thick] (20:1) -- (340: 1) ;
\draw (20: 1) node {$\bullet$};
\draw (340: 1) node {$\bullet$};
\draw [ultra thick](0,0) -- (220: 1) ;
\draw  [ultra thick] (0,0) -- (260: 1) ;
\draw  [ultra thick] (220:1) -- (260: 1) ;
\draw (220: 1) node {$\bullet$};
\draw (260: 1) node {$\bullet$};
\draw [ultra thick](0,0) -- (280: 1) ;
\draw  [ultra thick] (0,0) -- (320: 1) ;
\draw  [ultra thick] (320:1) -- (280: 1) ;
\draw (280: 1) node {$\bullet$};
\draw (320: 1) node {$\bullet$};
\draw (180: 1.8) node {$\tilde{\alpha}_{0,13}$};
\end{tikzpicture} 
 \rule[-15pt]{0pt}{40pt}
\end{tabular}

\begin{tabular}{ccc}
\begin{tikzpicture}[scale=1.2]
\tikzstyle{sommetNO}=[shape=circle,fill=white,draw=black,minimum size=0.5pt, inner sep=1.5pt]
\fill [color=gray!40] (0,0) -- (20:1) -- (340:1) -- cycle;
\fill [color=gray!40] (0,0) -- (160:1) -- (200:1) -- cycle;
\node[draw=white, sommetNO] (X3) at (160:1) {};
\node[draw=white, sommetNO] (X2) at (200:1) {};
\draw [ultra thick] (0,0) -- (20: 1) ;
\draw [ultra thick] (0,0) -- (340: 1) ;
\draw [ultra thick] (20:1) -- (340: 1) ;
\draw [ultra thin](0,0) -- (X2) ;
\draw  [ultra thin] (0,0) -- (X3) ;
\draw  [ultra thick] (X2) -- (X3) ;
\draw (0,0) node {$\bullet$};
\draw (20: 1) node {$\bullet$};
\draw (340: 1) node {$\bullet$};
\draw  [ultra thick] (0:0) -- (90: 0.5) ;
\draw (90: 0.5) node {$\bullet$};
\draw (180: 1.8) node {$\tilde{\alpha}_{0,6}$};
\end{tikzpicture} 
&
\begin{tikzpicture}[scale=1.2]

\tikzstyle{sommetNO}=[shape=circle,fill=white,draw=black,minimum size=0.5pt, inner sep=1.5pt]
\fill [color=gray!40] (0,0) -- (20:1) -- (340:1) -- cycle;
\fill [color=gray!40] (0,0) -- (160:1) -- (200:1) -- cycle;
\fill [color=gray!40] (0,0) -- (250:1) -- (290:1) -- cycle;
\fill [color=gray!40] (0,0) -- (110:1) -- (70:1) -- cycle;
\node[draw=white, sommetNO] (X3) at (70:1) {};
\node[draw=white, sommetNO] (X2) at (110:1) {};

\node[draw=white, sommetNO] (X4) at (160:1) {};
\node[draw=white, sommetNO] (X5) at (200:1) {};
\draw [ultra thin](0,0) -- (X2) ;
\draw  [ultra thin] (0,0) -- (X3) ;
\draw  [ultra thick] (X2) -- (X3) ;
\draw [ultra thin](0,0) -- (X4) ;
\draw  [ultra thin] (0,0) -- (X5) ;
\draw  [ultra thick] (X4) -- (X5) ; 
\draw [ultra thick] (0,0) -- (20: 1) ;
\draw [ultra thick] (0,0) -- (340: 1) ;
\draw [ultra thick] (20:1) -- (340: 1) ;
\draw (0,0) node {$\bullet$};
\draw (20: 1) node {$\bullet$};
\draw (340: 1) node {$\bullet$};
\draw [ultra thick](0,0) -- (250: 1) ;
\draw  [ultra thick] (0,0) -- (290: 1) ;
\draw  [ultra thick] (250:1) -- (290: 1) ;
\draw (250: 1) node {$\bullet$};
\draw (290: 1) node {$\bullet$};
\draw (135: 1) node {$\bullet$};
\draw  [ultra thick] (0:0) -- (135: 1) ;
\draw (180: 1.8) node {$\tilde{\alpha}_{0,10}$};
\end{tikzpicture} 
&
\begin{tikzpicture}[scale=1.2]
\tikzstyle{sommetNO}=[shape=circle,fill=white,draw=black,minimum size=0.5pt, inner sep=1.5pt]
\fill [color=gray!40] (0,0) -- (40:1) -- (80:1) -- cycle;
\fill [color=gray!40] (0,0) -- (140:1) -- (100:1) -- cycle;
\fill [color=gray!40] (0,0) -- (160:1) -- (200:1) -- cycle;
\fill [color=gray!40] (0,0) -- (20:1) -- (340:1) -- cycle;
\fill [color=gray!40] (0,0) -- (220:1) -- (260:1) -- cycle;
\fill [color=gray!40] (0,0) -- (320:1) -- (280:1) -- cycle;

\node[draw=white, sommetNO] (X3) at (40:1) {};
\node[draw=white, sommetNO] (X2) at (80:1) {};
\node[draw=white, sommetNO] (X4) at (100:1) {};
\node[draw=white, sommetNO] (X5) at (140:1) {};
\node[draw=white, sommetNO] (X6) at (160:1) {};
\node[draw=white, sommetNO] (X7) at (200:1) {};

\draw [ultra thin](0,0) -- (X2) ;
\draw  [ultra thin] (0,0) -- (X3) ;
\draw  [ultra thick] (X2) -- (X3) ;
\draw [ultra thin](0,0) -- (X4) ;
\draw  [ultra thin] (0,0) -- (X5) ;
\draw  [ultra thick] (X4) -- (X5) ;
\draw [ultra thin](0,0) -- (X6) ;
\draw  [ultra thin] (0,0) -- (X7) ;
\draw  [ultra thick] (X6) -- (X7) ;

\draw (0,0) node {$\bullet$};
\draw [ultra thick] (0,0) -- (20: 1) ;
\draw [ultra thick] (0,0) -- (340: 1) ;
\draw [ultra thick] (20:1) -- (340: 1) ;
\draw (20: 1) node {$\bullet$};
\draw (340: 1) node {$\bullet$};
\draw [ultra thick](0,0) -- (220: 1) ;
\draw  [ultra thick] (0,0) -- (260: 1) ;
\draw  [ultra thick] (220:1) -- (260: 1) ;
\draw (220: 1) node {$\bullet$};
\draw (260: 1) node {$\bullet$};
\draw [ultra thick](0,0) -- (280: 1) ;
\draw  [ultra thick] (0,0) -- (320: 1) ;
\draw  [ultra thick] (320:1) -- (280: 1) ;
\draw (280: 1) node {$\bullet$};
\draw (320: 1) node {$\bullet$};
\draw  [ultra thick] (0:0) -- (90: 1) ;
\draw (90: 1) node {$\bullet$};
\draw (180: 1.8) node {$\tilde{\alpha}_{0,14}$};
\end{tikzpicture} 
  \rule[-15pt]{0pt}{40pt}
\end{tabular}
\begin{center}
Figure 2. Examples of the cubic form $\tilde{\alpha}_{0,n} $ for $n\in \{ 3, \ldots, 14\} $. 
\end{center}

The property of periodicity modulo $4$  is quite obvious.
\begin{df}
\label{definitiona_{n,0}}
The forms $\tilde{\alpha}_{n,0}$ are defined according the following simple rule:
$$
\tilde{\alpha}_{n,0}(x_1,\ldots,x_n):=\tilde{\alpha}_{0,n}(x_1,\ldots,x_n)+x_1.
$$
\end{df}

\subsection{The forms $ \tilde{\alpha}_{p,q}$}

The cubic forms $\tilde{\alpha}_{p,q}$ with signature $(p,q)$
such that $p>0$ and $q>0$ are defined as follows.

\begin{df}
\begin{enumerate}

\item
   
The first eleven cases are defined as follows.
  
 \bigskip

\begin{tabular}{ccc}
 \vspace{0.3cm}
\begin{tikzpicture}[scale=1.2]
\tikzstyle{sommetNO}=[shape=circle,fill=white,draw=black,minimum size=0.5pt, inner sep=1.5pt]
\fill [color=gray!40] (0,0) -- (20:1) -- (340:1) -- cycle;
\node[draw=white, sommetNO] (X3) at (340:1) {};
\node[draw=white, sommetNO] (X2) at (20:1) {};
\draw [ultra thick] (0,0) -- (X2) ;
\draw [ultra thick] (0,0) -- (X3) ;
\draw [ultra thick] (X2) -- (X3) ;
\draw (0,0) node {$\bullet$};
\draw (0:0) node[left] {\small{$x_1$}};
\draw (20:1) node[right] {\small{$x_2$}};
\draw (340:1) node[right] {\small{$x_3$}};
\draw (180: 1.2) node {$\tilde{\alpha}_{1,2} $};
\end{tikzpicture} 
 
 &
 
\begin{tikzpicture}[scale=1.2]
\tikzstyle{sommetNO}=[shape=circle,fill=white,draw=black,minimum size=0.5pt, inner sep=1.5pt]
\fill [color=gray!40] (0,0) -- (20:1) -- (340:1) -- cycle;
\node[draw=white, sommetNO] (X2) at (20:1) {};
\node[draw=white, sommetNO] (X3) at (340:1) {};
\node[draw=white, sommetNO] (X4) at (160:1) {};
\draw [ultra thick] (0,0) -- (X2) ;
\draw [ultra thick] (0,0) -- (X3) ;
\draw [ultra thick] (X2) -- (X3) ;
\draw (0,0) node {$\bullet$};
\draw (0,0) node[left] {\small{$x_1$}};
\draw (20:1) node[right] {\small{$x_4$}};
\draw (340:1) node[right] {\small{$x_3$}};
\draw (160:1 ) node[left] {\small{$x_2$}};
\draw (180: 2) node {$\tilde{\alpha}_{2,2} $};
\end{tikzpicture} 
&
\begin{tikzpicture}[scale=1.2]
\tikzstyle{sommetNO}=[shape=circle,fill=white,draw=black,minimum size=0.5pt, inner sep=1.5pt]
\fill [color=gray!40] (0,0) -- (20:1) -- (340:1) -- cycle;
\node[draw=white, sommetNO] (X3) at (340:1) {};
\draw [ultra thick] (0,0) -- (20: 1) ;
\draw [ultra thick] (0,0) -- (X3) ;
\draw [ultra thick] (20:1) -- (X3) ;
\draw (0,0) node {$\bullet$};
\draw (20: 1) node {$\bullet$};
\draw (160: 1) node {$\bullet$};
\draw (0:0) node[left] {\small{$x_1$}};
\draw (20:1) node[right] {\small{$x_4$}};
\draw (340:1) node[right] {\small{$x_3$}};
\draw (160:1 ) node[left] {\small{$x_2$}};
\draw (180: 2) node {$\tilde{\alpha}_{1,3} $};
\end{tikzpicture}

\\ \vspace{0.3cm}
\begin{tikzpicture}[scale=1.2]
\tikzstyle{sommetNO}=[shape=circle,fill=white,draw=black,minimum size=0.5pt, inner sep=1.5pt]
\fill [color=gray!40] (0,0) -- (20:1) -- (340:1) -- cycle;
\fill [color=gray!40] (0,0) -- (160:1) -- (200:1) -- cycle;
\node[draw=white, sommetNO] (X2) at (20:1) {};
\node[draw=white, sommetNO] (X3) at (340:1) {};
\node[draw=white, sommetNO] (X4) at (160:1) {};
\node[draw=white, sommetNO] (X5) at (200:1) {};
\draw [ultra thick] (0,0) -- (X2) ;
\draw [ultra thick] (0,0) -- (X3) ;
\draw [ultra thick] (X2) -- (X3) ;
\draw [ultra thin](0,0) -- (X4) ;
\draw  [ultra thin] (0,0) -- (X5) ;
\draw  [ultra thick] (X4) -- (X5) ;
\draw (0,0) node {$\bullet$};
\draw (20:1) node[right] {\small{$x_5$}};
\draw (340: 1) node[right] {\small{$x_4$}};
\draw (200:1) node[left] {\small{$x_2$}};
\draw (160: 1) node[left] {\small{$x_3$}};
\draw (0:0) node[below] {\small{$x_1$}};
\draw (180: 1.8) node {$\tilde{\alpha}_{2,3}$};
\end{tikzpicture} 
& 

\begin{tikzpicture}[scale=1.2]
\tikzstyle{sommetNO}=[shape=circle,fill=white,draw=black,minimum size=0.5pt, inner sep=1.5pt]
\fill [color=gray!40] (0,0) -- (20:1) -- (340:1) -- cycle;
\fill [color=gray!40] (0,0) -- (160:1) -- (200:1) -- cycle;
\node[draw=white, sommetNO] (X2) at (20:1) {};
\node[draw=white, sommetNO] (X3) at (340:1) {};
\draw [ultra thick] (0,0) -- (X2) ;
\draw [ultra thick] (0,0) -- (X3) ;
\draw [ultra thick] (X2) -- (X3) ;
\draw [ultra thin](0,0) -- (160: 1) ;
\draw  [ultra thin] (0,0) -- (200: 1) ;
\draw  [ultra thick] (160:1) -- (200: 1) ;
\draw (0,0) node {$\bullet$};
\draw (160: 1) node {$\bullet$};
\draw (200: 1) node {$\bullet$};
\draw (20:1) node[right] {\small{$x_5$}};
\draw (340: 1) node[right] {\small{$x_4$}};
\draw (200:1) node[left] {\small{$x_2$}};
\draw (160: 1) node[left] {\small{$x_3$}};
\draw (0:0) node[below] {\small{$x_1$}};
\draw (180: 1.8) node {$ \tilde{\alpha}_{1,4}$};
\end{tikzpicture}

&
\\ \vspace{0.3cm}
\begin{tikzpicture}[scale=1.2]
\tikzstyle{sommetNO}=[shape=circle,fill=white,draw=black,minimum size=0.5pt, inner sep=1.5pt]
\fill [color=gray!40] (0,0) -- (20:1) -- (340:1) -- cycle;
\fill [color=gray!40] (0,0) -- (160:1) -- (200:1) -- cycle;
\node[draw=white, sommetNO] (X2) at (90:0.5) {};
\node[draw=white, sommetNO] (X3) at (200:1) {};
\node[draw=white, sommetNO] (X4) at (160:1) {};
\node[draw=white, sommetNO] (X5) at (340:1) {};
\node[draw=white, sommetNO] (X6) at (20:1) {};
\draw [ultra thick] (0,0) -- (X6) ;
\draw [ultra thick] (0,0) -- (X5) ;
\draw [ultra thick] (X6) -- (X5) ;
\draw [ultra thin](0,0) -- (X4) ;
\draw  [ultra thin] (0,0) -- (X3) ;
\draw  [ultra thick] (X4) -- (X3) ;
\draw (0,0) node {$\bullet$};
\draw  [ultra thick] (0:0) -- (X2) ;
\draw (X6) node[right] {\small{$x_6$}};
\draw (X5) node[right] {\small{$x_5$}};
\draw (X3) node[left] {\small{$x_3$}};
\draw (X4) node[left] {\small{$x_4$}};
\draw (0,0) node[below] {\small{$x_1$}};
\draw (90: 0.5) node[above] {\small{$x_2$}};
\draw (180: 1.8) node {$\tilde{\alpha}_{3,3}$};
\end{tikzpicture} 
&
\begin{tikzpicture}[scale=1.2]
\tikzstyle{sommetNO}=[shape=circle,fill=white,draw=black,minimum size=0.5pt, inner sep=1.5pt]
\fill [color=gray!40] (0,0) -- (20:1) -- (340:1) -- cycle;
\fill [color=gray!40] (0,0) -- (160:1) -- (200:1) -- cycle;
\node[draw=white, sommetNO] (X2) at (20:1) {};
\node[draw=white, sommetNO] (X3) at (340:1) {};
\node[draw=white, sommetNO] (X4) at (160:1) {};
\node[draw=white, sommetNO] (X5) at (200:1) {};
\draw [ultra thick] (0,0) -- (X2) ;
\draw [ultra thick] (0,0) -- (X3) ;
\draw [ultra thick] (X2) -- (X3) ;
\draw [ultra thin](0,0) -- (X4) ;
\draw  [ultra thin] (0,0) -- (X5) ;
\draw  [ultra thick] (X4) -- (X5) ; 
\draw (0,0) node {$\bullet$};
\draw  [ultra thick] (0:0) -- (90: 0.5) ;
\draw (X4) node {$\bullet$};
\draw (X5) node {$\bullet$};
\draw (90: 0.5) node {$\bullet$};
\draw (20:1) node[right] {\small{$x_6$}};
\draw (340: 1) node[right] {\small{$x_5$}};
\draw (200:1) node[left] {\small{$x_3$}};
\draw (160: 1) node[left] {\small{$x_4$}};
\draw (0,0) node[below] {\small{$x_1$}};
\draw (90: 0.5) node[above] {\small{$x_2$}};
\draw (180: 1.8) node {$\tilde{\alpha}_{2,4}$};
\end{tikzpicture} 
&
\begin{tikzpicture}[scale=1.2]
\tikzstyle{sommetNO}=[shape=circle,fill=white,draw=black,minimum size=0.5pt, inner sep=1.5pt]
\fill [color=gray!40] (0,0) -- (20:1) -- (340:1) -- cycle;
\fill [color=gray!40] (0,0) -- (160:1) -- (200:1) -- cycle;
\node[draw=white, sommetNO] (X2) at (90:0.5) {};
\node[draw=white, sommetNO] (X3) at (160:1) {};
\node[draw=white, sommetNO] (X4) at (200:1) {};
\draw [ultra thick] (0,0) -- (X2) ;
\draw [ultra thick] (0,0) -- (340:1) ;
\draw [ultra thick] (20:1) -- (340:1) ;
\draw [ultra thin](0,0) -- (X3) ;
\draw [ultra thin](0,0) -- (X4) ;
\draw  [ultra thick] (0,0) -- (20:1) ;
\draw  [ultra thick] (X3) -- (X4) ;
\draw (0,0) node {$\bullet$};
\draw (20: 1) node {$\bullet$};
\draw (340: 1) node {$\bullet$};
\draw (20:1) node[right] {\small{$x_6$}};
\draw (340:1) node[right] {\small{$x_5$}};
\draw (200:1) node[left] {\small{$x_3$}};
\draw (160: 1) node[left] {\small{$x_4$}};
\draw (0,0) node[below] {\small{$x_1$}};
\draw (X2) node[above] {\small{$x_2$}};
\draw (180: 1.8) node {$\tilde{\alpha}_{1,5}$};
\end{tikzpicture} 
\\ \vspace{0.3cm}
\begin{tikzpicture}[scale=1.2]
\tikzstyle{sommetNO}=[shape=circle,fill=white,draw=black,minimum size=0.5pt, inner sep=1.5pt]
\fill [color=gray!40] (0,0) -- (50:1) -- (10:1) -- cycle;
\fill [color=gray!40] (0,0) -- (130:1) -- (170:1) -- cycle;
\fill [color=gray!40] (0,0) -- (250:1) -- (290:1) -- cycle;
\node[draw=white, sommetNO] (X2) at (130:1) {};
\node[draw=white, sommetNO] (X3) at (170:1) {};
\node[draw=white, sommetNO] (X4) at (250:1) {};
\node[draw=white, sommetNO] (X5) at (290:1) {};
\draw [ultra thick] (0,0) -- (X2) ;
\draw [ultra thick] (0,0) -- (X3) ;
\draw [ultra thick] (X2) -- (X3) ;
\draw [ultra thin](0,0) -- (X4) ;
\draw  [ultra thin] (0,0) -- (X5) ;
\draw  [ultra thick] (X4) -- (X5) ;
\draw [ultra thick] (0,0) -- (10: 1) ;
\draw [ultra thick] (0,0) -- (50: 1) ;
\draw [ultra thick] (10:1) -- (50: 1) ;
\draw (0,0) node {$\bullet$};
\draw (10: 1) node {$\bullet$};
\draw (50: 1) node {$\bullet$};
\draw (X2) node[left] {\small{$x_2$}};
\draw (X3) node[left] {\small{$x_3$}};
\draw (X4) node[left] {\small{$x_4$}};
\draw (X5) node[right] {\small{$x_5$}};
\draw (10: 1) node[right] {\small{$x_6$}};
\draw (50: 1) node[right] {\small{$x_7$}};
\node at (90:0.3) {$x_1$};
\draw (180: 1.8) node {$\tilde{\alpha}_{2,5}$};
\end{tikzpicture} 
&
\begin{tikzpicture}[scale=1.2]
\tikzstyle{sommetNO}=[shape=circle,fill=white,draw=black,minimum size=0.5pt, inner sep=1.5pt]
\fill [color=gray!40] (0,0) -- (50:1) -- (10:1) -- cycle;
\fill [color=gray!40] (0,0) -- (130:1) -- (170:1) -- cycle;
\fill [color=gray!40] (0,0) -- (250:1) -- (290:1) -- cycle;
\node[draw=white, sommetNO] (X2) at (90:0.7) {};
\node[draw=white, sommetNO] (X3) at (170:1) {};
\node[draw=white, sommetNO] (X6) at (290:1) {};
\node[draw=white, sommetNO] (X5) at (250:1) {};
\node[draw=white, sommetNO] (X4) at (130:1) {};
\draw [ultra thick] (0,0) -- (10:1) ;
\draw [ultra thick] (0,0) -- (50:1) ;
\draw [ultra thick] (10:1) -- (50:1) ;
\draw [ultra thin](0,0) -- (X6) ;
\draw  [ultra thin] (0,0) -- (X5) ;
\draw  [ultra thick] (X5) -- (X6) ;
\draw [ultra thick] (0,0) -- (X3) ;
\draw [ultra thick] (0,0) -- (X4) ;
\draw [ultra thick] (X4) -- (X3) ;
\draw (0,0) node {$\bullet$};
\draw (10: 1) node {$\bullet$};
\draw (50: 1) node {$\bullet$};
\draw (X2) node[above] {\small{$x_2$}};
\draw (X3) node[left] {\small{$x_3$}};
\draw (X4) node[left] {\small{$x_4$}};
\draw (X5) node[left] {\small{$x_5$}};
\draw (X6) node[right] {\small{$x_6$}};
\draw (10: 1) node[right] {\small{$x_7$}};
\draw (50: 1) node[right] {\small{$x_8$}};
\draw (180: 1.8) node {$\tilde{\alpha}_{2,6}$};
\node at (90:0.3) {$x_1$};
\end{tikzpicture} 
& 
\begin{tikzpicture}[scale=1.2]
\tikzstyle{sommetNO}=[shape=circle,fill=white,draw=black,minimum size=0.5pt, inner sep=1.5pt]
\fill [color=gray!40] (0,0) -- (20:1) -- (340:1) -- cycle;
\fill [color=gray!40] (0,0) -- (160:1) -- (200:1) -- cycle;
\fill [color=gray!40] (0,0) -- (250:1) -- (290:1) -- cycle;
\fill [color=gray!40] (0,0) -- (110:1) -- (70:1) -- cycle;
\node[draw=white, sommetNO] (X7) at (250:1) {};
\node[draw=white, sommetNO] (X6) at (290:1) {};
\node[draw=white, sommetNO] (X3) at (70:1) {};
\node[draw=white, sommetNO] (X2) at (110:1) {};
\node[draw=white, sommetNO] (X4) at (160:1) {};
\node[draw=white, sommetNO] (X5) at (200:1) {};
\draw [ultra thin](0,0) -- (X2) ;
\draw  [ultra thin] (0,0) -- (X3) ;
\draw  [ultra thick] (X2) -- (X3) ;
\draw [ultra thin](0,0) -- (X4) ;
\draw  [ultra thin] (0,0) -- (X5) ;
\draw  [ultra thick] (X4) -- (X5) ;
\draw [ultra thick] (0,0) -- (20: 1) ;
\draw [ultra thick] (0,0) -- (340: 1) ;
\draw [ultra thick] (20:1) -- (340: 1) ;
\draw (0,0) node {$\bullet$};
\draw (20: 1) node {$\bullet$};
\draw (340: 1) node {$\bullet$};
\draw [ultra thick](0,0) -- (X6) ;
\draw  [ultra thick] (0,0) -- (X7) ;
\draw  [ultra thick] (X6) -- (X7) ;
\draw (0: 0) node[above right ] {\small{$x_1$}};
\draw (X2) node[left] {\small{$x_3$}};
\draw (X3) node[right] {\small{$x_2$}};
\draw (X4) node[left] {\small{$x_4$}};
\draw (X5) node[left] {\small{$x_5$}};
\draw (X6) node[right] {\small{$x_7$}};
\draw (X7) node[left] {\small{$x_6$}};
\draw (340:1) node[right] {\small{$x_8$}};
\draw (20:1) node[right] {\small{$x_9$}};

\draw (180: 1.8) node {$\tilde{\alpha}_{3,6}$};
\end{tikzpicture} 
\end{tabular}

The coordinate formulas follow directly from the above graphs.

\item

We define the forms $\tilde{\alpha}_{p,q}$ with arbitrary $p>0$ and $q>0$, except for $(p,q)=(1,4)$ and $(p,q)=(4,1)$ in the last equation, using the following rules:
\begin{eqnarray*}
\tilde{\alpha}_{q,p} &:=& \tilde{\alpha}_{p,q};\\
\tilde{\alpha}_{p,q+4} &:=& \tilde{\alpha}_{p+4,q};\\
\tilde{\alpha}_{p+2,q+2} (x_1, \ldots , x_{n+4}) & :=&  \tilde{\alpha}_{p,q} (x_1, \ldots , x_{n}) +  \tilde{\alpha}_{2,3} (x_1, x_{n+1} ,  x_{n+2},x_{n+3}, x_{n+4}) + x_1 . 
\end{eqnarray*}

\end{enumerate}

\end{df}

It is easy to check that the first eleven forms suffice to determine the rest.

\subsection{An equivalent formulation of the main result} \label{GRAPTHM}

Let us give a different way to formulate our main result.

\begin{thm}
\label{GrThm}
The cubic forms $\alpha_{p,q}$ and $\tilde{\alpha}_{p,q}$ are equivalent
for all $p,q$.
\end{thm}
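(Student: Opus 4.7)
The plan is to prove the theorem by induction on $n=p+q$, reducing the general case to the finite list of explicit base cases given in the definitions of $\tilde{\alpha}_{0,n}$ and $\tilde{\alpha}_{p,q}$. What must be produced, for each $(p,q)$, is a matrix $G\in\mathrm{GL}_n(\bbZ_2)$ with $\alpha_{p,q}(Gx)=\tilde{\alpha}_{p,q}(x)$; the resulting equivalence then implies, via the equivalence proposition of Section~\ref{Sectionclassification}, an isomorphism of $\bbZ_2^n$-graded algebras between $\bbO_{p,q}$ and the twisted group algebra defined by $\tilde{\alpha}_{p,q}$.

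First I would dispose of the base cases. For signature $(0,n)$, this means verifying directly that $\alpha_{0,n}\sim\tilde{\alpha}_{0,n}$ for $n\in\{3,4,5,6\}$. The case $n=3$ holds on the nose since $\tilde{\alpha}_{0,3}=\alpha_{0,3}$ by definition. For $n=4,5,6$ one exploits the fact that $\alpha_{0,n}$ depends only on the Hamming weight of $x$, writes down a short ansatz for $G$, and checks the identity $\alpha_{0,n}(Gx)=\tilde{\alpha}_{0,n}(x)$ by evaluating both sides on a spanning set of monomials, which is a finite computation over $\bbZ_2$. The same routine verification applies to each of the eleven explicit forms $\tilde{\alpha}_{1,2},\tilde{\alpha}_{2,2},\ldots,\tilde{\alpha}_{3,6}$ in the mixed signature.

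For the inductive step in the $(0,n)$ case, I would leverage the recursive identity
\begin{equation*}
\tilde{\alpha}_{0,n+4}(x_1,\ldots,x_{n+4})=\tilde{\alpha}_{0,n}(x_1,\ldots,x_n)+x_1+\tilde{\alpha}_{0,5}(x_1,x_{n+1},\ldots,x_{n+4}),
\end{equation*}
split according to $n\bmod 4$. Combined with the base-case transformation for the $\tilde{\alpha}_{0,5}$ block, one builds $G_{n+4}$ that acts as the inductive $G_n$ on the first $n$ coordinates and as a suitable five-variable map on $\{x_1,x_{n+1},\ldots,x_{n+4}\}$, coupled through the shared variable $x_1$. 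For $(p,q)$ with $p,q>0$, the three recursions of the previous subsection are used: $\tilde{\alpha}_{q,p}=\tilde{\alpha}_{p,q}$ and $\tilde{\alpha}_{p,q+4}=\tilde{\alpha}_{p+4,q}$ match parts (i) and (ii) of Proposition~\ref{thmIso} directly, while $\tilde{\alpha}_{p+2,q+2}=\tilde{\alpha}_{p,q}+\tilde{\alpha}_{2,3}+x_1$ propagates the equivalence using an explicit five-variable change of coordinates around the $\tilde{\alpha}_{2,3}$ block. The $(n,0)$ case follows from the $(0,n)$ case via $\tilde{\alpha}_{n,0}=\tilde{\alpha}_{0,n}+x_1$ together with the parallel relation for $\alpha_{n,0}$.

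The main obstacle is the bookkeeping in the inductive step: $\alpha_{0,n+4}$ is fully symmetric in its $n+4$ variables, while the recursive decomposition of $\tilde{\alpha}_{0,n+4}$ is manifestly \emph{not}, so $G_{n+4}$ cannot be block-diagonal and must mix the old and new coordinate blocks through $x_1$. The heart of the proof is to verify that the cubic and quadratic cross-terms between $\{x_2,\ldots,x_n\}$ and $\{x_{n+1},\ldots,x_{n+4}\}$ produced by $G_{n+4}$ cancel modulo $2$, which amounts to a routine but explicit polynomial identity over $\bbZ_2$ that has to be checked separately for each residue of $n$ mod $4$.
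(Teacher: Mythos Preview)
Your proposal is a strategy outline, not a proof, and the gap is precisely at the step you flag as ``routine.'' You propose to build $G_{n+4}$ so that it acts as the inductive $G_n$ on $x_1,\ldots,x_n$ and as some unspecified five-variable map on $\{x_1,x_{n+1},\ldots,x_{n+4}\}$, coupled only through $x_1$. But $\alpha_{0,n+4}$ contains every cubic monomial $x_ix_jx_k$ with $i,j\le n$ and $k>n$, every $x_ix_kx_\ell$ with $i\le n$ and $k,\ell>n$, and all the corresponding quadratic cross-terms. A transformation with the block structure you describe sends these to polynomials that still genuinely mix $\{x_2,\ldots,x_n\}$ with $\{x_{n+1},\ldots,x_{n+4}\}$; there is no reason those should cancel, and you have given neither the five-variable map nor the verification. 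This is not bookkeeping: it is the entire content of the periodicity statement (equivalently, of Theorem~\ref{thmcomplex}), which in the paper is \emph{derived from} Theorem~\ref{GrThm}, so you cannot invoke it here.

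The paper's proof proceeds along a different and more concrete route. It does not argue by induction on $n$. Instead it first reduces even $n$ to odd $n$ by explicit coordinate changes (Lemmas~\ref{4k--4k-1} and~\ref{4k+2--4k+1}, isolating or nearly isolating the last variable). For odd $n$ it then writes down, in one shot, a single explicit transformation---separate formulas for $k$ even and $k$ odd---that carries $\alpha_{p,q}$ to the factored shape
\[
x_1+(x_1+1)\bigl(\alpha^{\mathrm{Cl}}_{2a,0}(x_2,\ldots)+\alpha^{\mathrm{Cl}}_{0,2b}(\ldots)\bigr)+(\text{linear part}),
\]
see Lemmas~\ref{lem4k+3} and~\ref{lem4k+1}. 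The point is that this factorization through $(x_1+1)$ is the real structural insight: once the cubic form is written as $x_1$ plus $(x_1+1)$ times a Clifford quadratic form, the known decomposition of Clifford quadratic forms into two-variable blocks (Lemma~\ref{CliffordPeriodicity}) immediately yields the fan-of-triangles graph $\tilde{\alpha}_{p,q}$. The explicit transformations the paper uses mix \emph{all} coordinates and are not of the block type you propose; if you want an inductive argument, you would first need to prove a lemma of the form $\alpha_{0,n+4}\sim\alpha_{0,n}+\bigl(\tilde{\alpha}_{0,5}(x_1,x_{n+1},\ldots,x_{n+4})+x_1\bigr)$ by exhibiting an explicit $G$, and that is exactly the work you have not done.
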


\noindent
Theorems \ref{thmcomplex} and \ref{thmreal} will follow from Theorem~\ref{GrThm}
since the forms $\tilde{\alpha}_{p,q}$ have the required periodicity.

\section{Proof of Theorem~\ref{GrThm}} \label{subsectionisomorphisms}

In this section, we give explicitly step by step the
coordinate transformations that intertwine the cubic forms~$\alpha_{p,q}$ and $\tilde{\alpha}_{p,q}$. 
According to the number of generators modulo $4$, different cases appear. 
The cases where the number of generators is even will be deduced 
from cases where the number of generators is odd. 
This is explained in Sections \ref{subsection4k} and \ref{subsection4k+2}. 
In Section \ref{n=4k+3andn=4k+1}, we focus on the two cases with odd number of variables. 
Finally, we finish the proof of the Theorem~\ref{GrThm} in Section \ref{conclusion}. 

\subsection{The case $(p,q)$ with $n=4k$}
\label{subsection4k}
This first lemma shows that the case $n=4k$ can be deduced from the case $n=4k-1$.
The cubic form $\alpha_{p,q}$ with $p+q=4k$ is equivalent to a cubic form where 
the last generator completely disappears or is only present in the linear part.

We introduce the following notation.
Consider the projection $\Z_2^n\to\Z_2^{n-1}$ defined by ``forgetting'' the last coordinate, $x_n$.
The cubic form on $\Z_2^n$ obtained by the pull-back of a cubic form $\alpha$ on $\Z_2^{n-1}$
will be denoted by $\widehat{\alpha}$.
In other words,
$$
\widehat{\alpha}(x_1,\ldots,x_n)=\alpha(x_1,\ldots,x_{n-1}).
$$

\begin{lem} 
\label{4k--4k-1}
If $n= p+q = 4k$ with $k \in \mathbb{N} \backslash\{0\} $ then, one has the equivalent forms
\begin{eqnarray*}
\label{Equiv4k-n0}
\alpha_{0,n} \simeq  \widehat{\alpha}_{0,n-1}  
\hspace{0.8cm}\mbox{and} \hspace{0.8cm}
\alpha_{n,0} \simeq  \widehat{\alpha}_{n-1,0},
\end{eqnarray*}
\begin{eqnarray*}
 \label{Equiv4k-pq}
\alpha_{p,q}  \simeq
\left\{ 
\begin{array}{ccl}
\widehat{\alpha}_{p,q-1} &  & \mbox{if    }   p, q >0 \mbox{ are even}   ,  \\[4pt]
\widehat{\alpha}_{p,q-1}  + x_n  &  & \mbox{if   } p , q\geq 1 \mbox{ are odd}   . 
\end{array} 
\right. 
\end{eqnarray*}
\end{lem}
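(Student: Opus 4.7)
The plan is to establish all four parts of the lemma by a single explicit linear change of coordinates in $\mathrm{GL}_n(\Z_2)$. I will try the substitution $G$ defined by $x_i = y_i + y_n$ for $1 \leq i \leq n-1$ and $x_n = y_n$; this is invertible with inverse $y_i = x_i + x_n$ for $i<n$, $y_n=x_n$.

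The reason this $G$ is well-suited is the weight-dependence property recalled in Section~3.3: $\alpha_n(x)$ depends only on $w(x) \bmod 4$, vanishing exactly when $w(x) \equiv 0 \pmod 4$. Under $G$ the weight transforms trivially:
\[
w(x) = \begin{cases} w(y_1,\ldots,y_{n-1}) & \text{if } y_n = 0, \\ n - w(y_1,\ldots,y_{n-1}) & \text{if } y_n = 1. \end{cases}
\]
Since $n = 4k$, both residues coincide modulo $4$, and hence $\alpha_n(Gy) = \alpha_{n-1}(y_1,\ldots,y_{n-1}) = \widehat{\alpha}_{n-1}(y)$. Thus the cubic core of $\alpha_{p,q}$ is sent, after the substitution, to the pullback of $\alpha_{n-1}$ from $\Z_2^{n-1}$.

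It remains only to track the linear signature term $L(x) = \sum_{i\leq p} x_i$. A direct calculation gives $L(Gy) = \sum_{i\leq p} y_i + p\,y_n$ when $p \leq n-1$, and $L(Gy) = \sum_{i\leq n-1} y_i + n\,y_n = \sum_{i\leq n-1} y_i$ when $p = n$ (since $n$ is even). Because $n=4k$ is even, $p$ and $q = n-p$ have the same parity, so the coefficient $p\,y_n$ vanishes exactly in the even-parity cases. Matching with the definitions of $\widehat{\alpha}_{p,q-1}$ in the mixed-signature cases, and of $\widehat{\alpha}_{0,n-1}$, $\widehat{\alpha}_{n-1,0}$ in the two exceptional cases, then yields all four assertions of the lemma on the nose.

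There is no real obstacle to this strategy once the substitution $G$ is identified. The only bit of insight required is recognising that sending $y_i \mapsto y_i + y_n$ for $i<n$ interacts with the weight-modulo-$4$ dependence of $\alpha_n$ in exactly the way needed precisely when $n \equiv 0 \pmod 4$; everything else is elementary bookkeeping of the linear terms, and the parity dichotomy between $\widehat{\alpha}_{p,q-1}$ and $\widehat{\alpha}_{p,q-1} + x_n$ falls out automatically.
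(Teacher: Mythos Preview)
Your argument is correct and considerably cleaner than the paper's, but one sentence needs repair. You write ``both residues coincide modulo $4$'', meaning $w' \equiv n - w' \pmod 4$; this is false in general (e.g.\ $w'=1$ gives $n-w' \equiv 3$). What is actually true, and what you need, is that $w' \equiv 0 \pmod 4$ if and only if $n - w' \equiv 0 \pmod 4$ when $n = 4k$. Since $\alpha_n$ takes the value $0$ precisely on the class $w \equiv 0 \pmod 4$ and the value $1$ on the other three classes, this weaker symmetry already forces $\alpha_n(Gy) = \widehat{\alpha}_{n-1}(y)$. With that correction the rest of your bookkeeping on the linear part $\sum_{i\leq p} x_i$ is accurate and gives all four cases.

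Your route is genuinely different from the paper's. The paper exhibits an explicit, rather intricate coordinate transformation in two blocks (one block of the form $x_1 + x_i + x_{4k}$, another of ``all-but-one'' sums $x_2 + \cdots + \widehat{x_i} + \cdots + x_{4k}$), and then a separate variant of it for the even-$p$, even-$q$ case; the equivalence is established by direct verification. You instead use the single, much simpler substitution $x_i \mapsto y_i + y_n$ for $i<n$, and let the weight-mod-$4$ characterisation of $\alpha_n$ from Section~3.3 do the work on the cubic core; the parity dichotomy $\widehat{\alpha}_{p,q-1}$ versus $\widehat{\alpha}_{p,q-1} + x_n$ then drops out of the linear term automatically. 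Your argument is more conceptual and handles all signatures uniformly; the paper's transformation, by contrast, appears to be one member of a family built to work across the various residue classes of $n$ treated in the subsequent lemmas, which may explain its greater complexity here.
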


\begin{proof}
To establish the equivalence in the cases $(n,0)$, $(0,n)$ and $(p,q)$ where $p,q \geq 1$ are odd, we choose the following coordinate transformation:
\begin{eqnarray}
\label{eqn4k}
x'_1& = &x _1  + x_{4k}, \nonumber \\
x'_2& = &x_1 + x _2 + x_{4k},\nonumber  \\
\cdots & = &   \cdots   \nonumber\\
x'_i& = &x_1 +x _i +    x_{4k},  \nonumber\\
\cdots & = &   \cdots   \nonumber\\
x'_{2k+1}& = &x_1 + x_{2k+1} +  x_{4k},  \\
x'_{2k+2}& = & x_2 + \ldots + x_{2k+1} +  \widehat{x_{2k+2}} + \ldots +   x_{4k}, \nonumber \\
\cdots & = & \cdots  \nonumber \\
x'_{i}& = & x_2 + \ldots + x_{i-1} +  \widehat{x_{i}} +  x_{i+1}+ \ldots + x_{4k},\nonumber  \\
\cdots & = & \cdots  \nonumber\\
x'_{4k-1}& = &x _2 +\ldots + x_{4k-2} +  \widehat{x_{4k-1}}  +  x_{4k}, \nonumber \\
x'_{4k}& = &x _{4k} , \nonumber
\end{eqnarray}
where $\widehat{\cdot}$ denotes removed terms. 

In the case where $p,q> 0$ are even, 
it suffices to consider the particular case $p\leq q$. 
Indeed, it was proved in \cite{KMG2014} that $\alpha_{p,q} \simeq \alpha_{q,p}$ for every  $p,q> 0$, 
and that $\alpha_{q-1,p} \simeq \alpha_{q,p-1}$ for every  $p,q> 0$ if $p$ and $q$ are even and $p+q = 4k +3$. 
If $0<p \leq q$ are even, the coordinate transformation considered in this case is the same 
as (\ref{eqn4k}) for every elements $x'_i$ where $i \in \{ 2, \ldots, n-1 \} $ together with
\[
\begin{array}{ccl}
x'_1& = &  x_{4k},  \\
x'_{4k}& = &x_1 + x _{4k} . 
\end{array}
\]
Hence the lemma.
\end{proof}

\subsection{The case $(p,q)$ with $n=4k+2$ }
\label{subsection4k+2}

This second lemma allows us to reduce the case $n=4k+2$ to the case $n=4k+1$. 
The cubic form $\alpha_{p,q}$ with $p+q=4k+2$ is equivalent to a cubic form where the last coordinate is only present in the quadratic part and sometimes in the linear part. 

\begin{lem} \label{4k+2--4k+1}
If $n= p+q = 4k+2$ with $k \in \mathbb{N} \backslash\{0\} $ then, one has the equivalent forms
\begin{eqnarray}
\label{Equiv4k+2-0n}
\alpha_{0,n}  \simeq  \widehat{\alpha}_{0,n-1}   + x_n + x_n 
 \sum_{ i=1}^{  n-1} x_i  ,\\
 \label{Equiv4k+2-n0}
\alpha_{n,0}   \simeq  \widehat{\alpha}_{n-1,0}   + x_n + x_n 
  \sum_{i=1}^{  n-1}  x_i ,
\end{eqnarray}
\begin{eqnarray}
 \label{Equiv4k+2-pq}
 \displaystyle
\alpha_{p,q}    \simeq
\left\{ 
\begin{array}{lcl}
\displaystyle
\widehat{\alpha}_{p-1,q}  + x_n 
  \sum_{i = 1}^{n-1} x_i
 &  & \mbox{if    }   p , q\geq 1 \mbox{ are odd}  ,  \\[6pt]
 \displaystyle
\widehat{\alpha}_{p-1,q} + x_n + x_n 
  \sum_{i = 1}^{n-1} x_i
  &  & \mbox{if   }   p, q >0 \mbox{ are even}    . 
\end{array}
\right.
\end{eqnarray} 
\end{lem}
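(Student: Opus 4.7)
The plan is to mimic the strategy of Lemma~\ref{4k--4k-1}: find a $\Z_2$-linear change of coordinates $G\in\mathrm{GL}_n(\Z_2)$ that, applied to $\alpha_{p,q}$, cancels all cubic terms involving $x_n$ and reduces the result to $\widehat{\alpha}_{p',q'}$ plus controlled lower-degree error terms. The expected residue is precisely $x_n + x_n\sum_{i=1}^{n-1}x_i$ (with a possible linear adjustment from the signature), reflecting the fact that $n\equiv 2\pmod 4$ rather than $0\pmod 4$.

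Concretely, I would take a transformation closely patterned after (\ref{eqn4k}), with $x'_n=x_n$ and $x'_i$ for $i<n$ a $\Z_2$-linear combination of $x_1,\ldots,x_n$ chosen so that each new coordinate is an old coordinate plus a symmetric correction involving the other variables and $x_n$. Substituting into $\alpha_n$ produces, for each original cubic monomial $x_ax_bx_c$ with $a,b,c<n$, many cross-terms: contributions to monomials $x_ix_jx_k$ with $i,j,k<n$, to $x_ix_jx_n$, to $x_ix_n$, and to $x_n$ (using $x_n^2=x_n$ over $\Z_2$). One then collects these contributions and computes their $\Z_2$-coefficients.

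The heart of the argument is a parity count: the coefficient of $x_ix_jx_n$ in the transformed polynomial is a binomial sum modulo~$2$ that vanishes exactly when $n\equiv 0\pmod 4$ (the case already handled in Lemma~\ref{4k--4k-1}); for $n\equiv 2\pmod 4$ that same sum equals~$1$, but is cancelled by analogous contributions from the quadratic part of $\alpha_n$, leaving all pure cubics in $x_n$ annihilated. The quadratic contributions themselves, however, no longer cancel: the coefficient of $x_ix_n$ ends up being~$1$ for every $i<n$, giving the residual factor $x_n\sum_{i<n}x_i$, and a similar count produces a residual linear $x_n$. For the mixed signatures the extra linear part $\sum_{i\leq p}x_i$ must be transported through $G$ and matched against the discrepancy between $\widehat{\alpha}_{p,q}$ and $\widehat{\alpha}_{p-1,q}$; the parities of $p$ and $q$ then determine which of the two formulas in~(\ref{Equiv4k+2-pq}) applies, in particular explaining the presence or absence of the additional linear $x_n$. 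The cases $(n,0)$ and $(0,n)$ require a slight modification because the linear shift degenerates, but the same transformation works.

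The main obstacle is the combinatorial bookkeeping of these $\Z_2$-coefficient sums and the case analysis of the signature shift. Since the transformation is completely explicit, the verification reduces to a finite collection of binomial identities modulo~$2$, which can be checked term by term in the same spirit as the proof of Lemma~\ref{4k--4k-1}; the essential new ingredient is simply the substitution $k\mapsto k$ with $n=4k+2$, which flips exactly those parities that were previously even.
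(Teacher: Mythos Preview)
Your plan is sound in spirit: the paper, too, proves the lemma by exhibiting a single explicit element of $\mathrm{GL}_n(\Z_2)$ and invoking the classification results of \cite{KMG2014} to reduce the general signature to the case $p\le q$. So the high-level strategy matches.

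There are, however, two points worth flagging. First, what you have written is a plan rather than a proof: you never actually write down the transformation, and the key parity assertions (``the coefficient of $x_ix_jx_n$ is a binomial sum that equals $1$ when $n\equiv 2$, but is cancelled by the quadratic part'') are stated as expectations rather than computed. In a lemma whose entire content \emph{is} the explicit change of variables, this is the whole proof, not bookkeeping to be filled in later.

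Second, and more substantively, the paper's transformation~(\ref{eqn4k+2}) is \emph{not} the direct adaptation of~(\ref{eqn4k}) that you propose. In particular it does not fix $x_n$: one has $x'_1=x_{4k+2}$, while $x'_{4k+2}$ is a long symmetric sum, and the intermediate blocks are shifted accordingly. Your suggestion to keep $x'_n=x_n$ and reuse the shape of~(\ref{eqn4k}) verbatim may well work, but the heuristic ``the cubic residues in $x_n$ are killed by contributions from the quadratic part'' is exactly the kind of cancellation that can fail for the wrong choice of splitting between the two blocks (whose sizes change parity when $n$ goes from $4k$ to $4k+2$). Until you write the matrix down and check, this remains a conjecture. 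The safe route is either to produce your transformation and verify the four coefficient parities explicitly, or to adopt the paper's~(\ref{eqn4k+2}) and check that one instead.
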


\begin{proof}
To establish the equivalence, we choose the following coordinate transformation:
\begin{eqnarray}
\label{eqn4k+2}
x'_1& = & x_{4k+2}, \nonumber  \\
x'_2& = &x_1+ x_{4k+2}, \nonumber  \\
\cdots & = &   \cdots  \nonumber \\
x'_i& = &x_{i-1} +    x_{4k+2},  \nonumber \\
\cdots & = &   \cdots \nonumber  \\
x'_{2k+1}& = & x_{2k} +  x_{4k+2},  \\
x'_{2k+2}& = & x_1 + \ldots + x_{4k} +  \widehat{x_{4k+1}} +   x_{4k+2},\nonumber   \\
\cdots & = & \cdots \nonumber  \\
x'_{i}& = & x_2 + \ldots + x_{6k+2-i} +  \widehat{x_{6k+3-i}} +  x_{6k+4-i}+ \ldots + x_{4k+2},  \nonumber \\
\cdots & = & \cdots  \nonumber   \\
x'_{4k+2}& = &x _2 +\ldots + x_{2k} +  \widehat{x_{2k+1}} +  x_{2k+2} + \ldots +   x_{4k+2}.    \nonumber 
\end{eqnarray}

If the signature is $(n,0)$ and $(0,n)$, the equivalence is given by the above transformation. 
If the signature is $(p,q)$ with $p,q>0$, the transformation (\ref{eqn4k+2}) is taking into account only in the case 
$p\leq q$. 
The case $q < p$ is then deduced from the case $p\leq q$ since it was proven in \cite{KMG2014} that 
\begin{align*}
\alpha_{p,q} & \simeq \alpha_{q,p} & \mbox{if   }& p,q> 0 ,\\
\alpha_{p-1,q} & \simeq \alpha_{p,q-1} & \mbox{if   }  &p+q= 4k+1  \mbox{ and } p-1 \mbox{ is even,}\\
\alpha_{q,p-1} & \simeq \alpha_{q-1,p} & \mbox{if   }  &p+q= 4k+1  \mbox{ and } q \mbox{ is odd.}
\end{align*}

Lemma \ref{4k+2--4k+1} is proved.
\end{proof}

\subsection{The case $n=4k+3$ and $n=4k+1$  }
\label{n=4k+3andn=4k+1}

If $n=p+q$ is odd, there exist exactly four distinct algebras $\bbO_{p,q}$ with $p,q \geq 0$ 
up to graded isomorphism; see \cite{KMG2014}. 
We will treat each of these four cases independently.

\begin{lem}
\label{lem4k+3}
If $n=4k+3$ and $k$ is odd, 
then
\begin{enumerate}
\item
the form $\alpha_{0,n}$ is equivalent to 
\[
  x_1  +(x_1+1) \left(  \alpha^{\mathrm{Cl}}_{2k+2,0} (x_2, \ldots , x_{2k+3}) + \alpha^{\mathrm{Cl}}_{0,2k}(x_{2k+4}, \ldots , x_{4k+3})   \right) ,
\]
\item
the form $\alpha_{n,0}$ is equivalent to 
\[
(x_1+1) \left(  \alpha^{\mathrm{Cl}}_{2k+2,0} (x_2, \ldots , x_{2k+3}) + \alpha^{\mathrm{Cl}}_{0,2k}(x_{2k+4}, \ldots , x_{4k+3})   \right) ,
\]
\item
the form $\alpha_{2k+1,2k+2}$ is equivalent to 
\[
  x_1  +(x_1+1) \left(  \alpha^{\mathrm{Cl}}_{2k+2,0}  (x_2, \ldots , x_{2k+3})  + \alpha^{\mathrm{Cl}}_{0,2k} (x_{2k+4}, \ldots , x_{4k+3})   \right) + \sum_{i=2}^{2k+3} x_i ,
\]
\item
the form $\alpha_{2k,2k+3}$ is equivalent to 
\[
  x_1  +(x_1+1) \left(  \alpha^{\mathrm{Cl}}_{2k+2,0}  (x_2, \ldots , x_{2k+3})  + \alpha^{\mathrm{Cl}}_{0,2k} (x_{2k+4}, \ldots , x_{4k+3})   \right) + \sum_{i=2}^{2k} x_i + x_{2k+3} .
\]
\end{enumerate}
If $n=4k+3$ and $k$ is even, then 

\begin{enumerate}
\item
the form $\alpha_{0,n}$ is equivalent to 
\[
  x_1 +(x_1+1) 
  \left(  \alpha^{\mathrm{Cl}}_{2k,0} (x_2, \ldots , x_{2k+1}) + \alpha^{\mathrm{Cl}}_{0,2k+2} (x_{2k+2}, \ldots , x_{4k+3})  \right) ,
\]
\item
the form $\alpha_{n,0}$ is equivalent to 
\[
(x_1+1) 
  \left(  \alpha^{\mathrm{Cl}}_{2k,0} (x_2, \ldots , x_{2k+1}) + \alpha^{\mathrm{Cl}}_{0,2k+2} (x_{2k+2}, \ldots , x_{4k+3})  \right) ,
\]
\item
the form $\alpha_{2k+1,2k+2}$ is equivalent to 
\[
  x_1  +(x_1+1) \left(  \alpha^{\mathrm{Cl}}_{2k,0}  (x_2, \ldots , x_{2k+1})  + \alpha^{\mathrm{Cl}}_{0,2k+2} (x_{2k+2}, \ldots , x_{4k+3})   \right) + \sum_{i=2k+2}^{4k+3} x_i ,
\]
\item
the form $\alpha_{2k,2k+3}$ is equivalent to 
\[
  x_1  +(x_1+1) \left(  \alpha^{\mathrm{Cl}}_{2k,0}  (x_2, \ldots , x_{2k+1})  + \alpha^{\mathrm{Cl}}_{0,2k+2} (x_{2k+2}, \ldots , x_{4k+3})   \right) + \sum_{i=2k+3}^{4k+1} x_i + x_{4k+3} . 
\]
\end{enumerate}

\end{lem}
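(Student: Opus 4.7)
The plan is to treat the four assertions within each parity of $k$ uniformly, by exhibiting a single linear transformation $G \in \mathrm{GL}_{4k+3}(\Z_2)$ which handles all four signatures at once. Since $\alpha_{p,q}(x) = \alpha_{0,n}(x) + \sum_{i=1}^{p} x_i$, we have $\alpha_{p,q}(Gx) = \alpha_{0,n}(Gx) + \sum_{i=1}^{p}(Gx)_i$, so the four statements differ only by the $\Z_2$-linear image under $G$ of the monomials $x_1,\ldots,x_p$. It therefore suffices to find $G$, check the equivalence
\begin{equation*}
\alpha_{0,n}(Gx) = x_1 + (x_1{+}1)\bigl(\alpha^{\mathrm{Cl}}_{2k+2,0}(x_2,\ldots,x_{2k+3}) + \alpha^{\mathrm{Cl}}_{0,2k}(x_{2k+4},\ldots,x_{4k+3})\bigr)
\end{equation*}
for $k$ odd (the $k$ even case being analogous with blocks of sizes $(2k, 2k{+}2)$), and then verify that the residual linear terms coming from $\sum_{i \leq p}(Gx)_i$ reproduce the linear corrections of items (2)--(4).

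To guess $G$, I would read off the required shape from the target. The cubic part of the target is $x_1 \cdot Q(x_2,\ldots,x_n)$ with $Q$ a sum of two Clifford quadratic forms on disjoint blocks $I_1 = \{2,\ldots,2k{+}3\}$ and $I_2 = \{2k{+}4,\ldots,4k{+}3\}$, with no cubic terms inside either block alone and no cross-block cubic terms. This forces $G$ to kill all cubic terms of $\alpha_n$ not involving the new $x_1$, which strongly suggests a substitution of the form $x'_1 = x_1 + \ell(x_2,\ldots,x_n)$ with $\ell$ a carefully chosen $\Z_2$-linear form in the spirit of \eqref{eqn4k} and \eqref{eqn4k+2}, possibly composed with a permutation that partitions $\{2,\ldots,4k{+}3\}$ into $I_1$ and $I_2$. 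I would first determine such a transformation in the small cases $k=1, 2, 3$ to identify the pattern, then conjecture the general $G$ and proceed by induction on $k$, plausibly using the $n=4k{+}3 \leadsto n=4(k{+}1){+}3$ reduction to pass from one case to the next.

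The main obstacle is verifying that the quadratic part produced by the substitution splits precisely as $\alpha^{\mathrm{Cl}}_{2k+2,0}(I_1) + \alpha^{\mathrm{Cl}}_{0,2k}(I_2)$ with no cross-terms between the two blocks. Over $\Z_2$, substituting $x_1 \mapsto x_1 + \ell$ in the cubic $\sum_{i<j<k} x_ix_jx_k$ of $\alpha_n$ produces a large number of quadratic cross-terms (and also contributions to the linear part via the identity $y^2 = y$ in the function algebra), and the cancellation of all terms involving simultaneously an index from $I_1$ and an index from $I_2$ is the delicate parity-sensitive step; this is exactly why the statement splits into two cases according to $k \bmod 2$, with the ``extra'' two variables attached to the $\mathrm{Cl}_{+,0}$ block when $k$ is odd and to the $\mathrm{Cl}_{0,+}$ block when $k$ is even. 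Once the substitution is verified for item~(1), items (2)--(4) follow by a direct bookkeeping computation of $\sum_{i=1}^{p}(Gx)_i$ in each of the three remaining signatures, with the $\Z_2$ reductions explaining why the linear corrections for $(2k{+}1,2k{+}2)$ and $(2k,2k{+}3)$ end up supported on only one of the two blocks.
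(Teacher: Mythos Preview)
Your high-level plan matches the paper's: exhibit a single $G\in\mathrm{GL}_{4k+3}(\Z_2)$ per parity of $k$, verify item~(1), and obtain (2)--(4) by tracking the images of $x_1,\ldots,x_p$. The observation that one $G$ suffices for all four signatures is correct and is exactly what the paper does (implicitly).

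However, there is a genuine gap: your ansatz for $G$ is too restrictive, and as a result you have not actually produced a proof. You propose a substitution of the shape $x_1'=x_1+\ell(x_2,\ldots,x_n)$ followed by a permutation of the remaining coordinates. This is not enough. In the paper's explicit transformation (for $k$ odd), the new $x_1$ does not even involve the old $x_1$: one takes $x_1'=x_{2k+2}+x_{2k+3}$, and most of the remaining rows of $G$ are long $\Z_2$-linear combinations such as
\[
x_i' \;=\; x_1+\cdots+x_{2k+1}+x_{2k+3}+\cdots+x_{6k+4-i}+\widehat{x_{6k+5-i}}+x_{6k+6-i}+\cdots+x_{4k+3}
\]
for $2k+3\le i\le 4k+1$. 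A substitution in a single coordinate composed with a permutation cannot reproduce this; in particular, it cannot collapse the many cross-block quadratic terms that arise from the full cubic $\sum_{i<j<k}x_ix_jx_k$, which you correctly identified as the delicate step. Nor is induction on $k$ used or needed; the paper simply writes down closed-form coordinate transformations (one for $k$ odd, one for $k$ even) and the verification is a direct computation over $\Z_2$.

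So the missing content is precisely the explicit $G$. Your strategy for deriving (2)--(4) from (1) is fine once $G$ is in hand, but the proposal as written does not yet supply it, and the shape you conjectured for $G$ would not work.
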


\begin{proof}

If $k$ is odd, chose the following coordinate transformation:
\begin{eqnarray}
\label{eqn4k+3odd}
x'_1& = &x_{2k+2} + x_{2k +3},  \nonumber \\
x'_2& = &x _{2} +  x_{2k+2},  \nonumber  \\
\cdots & = &   \cdots \nonumber  \\
x'_i& = &x _{i} +  x_{2k+2}, \nonumber   \\
\cdots & = &   \cdots  \nonumber   \\
x'_{2k+1}& = &x _{2k+1} +  x_{2k+2} , \nonumber   \\
x'_{2k+2}& = &x _{2k+2} , \nonumber   \\
x'_{2k+3}& = &x _1 + \ldots  + x_{2k+1} + x_{2k+3} + \ldots + x_{4k+1}+\widehat{x_{4k+2}}  + x_{4k+3},    \nonumber   \\
\cdots & = & \cdots   \nonumber  \\
x'_{i}& = &x _1 + \ldots  + x_{2k+1} + x_{2k+3} + \ldots + x_{6k+4-i}+\widehat{x_{6k+5-i}}  + x_{6k+6-i}+\ldots + x_{4k+3},   \nonumber   \\
\cdots & = & \cdots \nonumber    \\
x'_{4k+1}& = &x _1 + \ldots  + x_{2k+1} +   x_{2k+3} + \widehat{x_{2k+4}} + x_{2k+5}+   \ldots + x_{4k+3} , \nonumber   \\
x'_{4k+2}& = &x _1 + \ldots  + x_{2k+1}+   x_{2k+3} + \ldots + x_{4k+2} ,\nonumber  \\
x'_{4k+3}& = &x _1 + \ldots + x_{2k+1} +   x_{2k+3} + \ldots + x_{4k+3} .  \nonumber  \\
\nonumber 
\end{eqnarray} 

If $k$ is even and $k \geq 2$, chose the following coordinate transformation:
\begin{eqnarray}
\label{eqn4k+3even}
x'_1& = &x_{2k+2} + x_{4k + 3},   \nonumber   \\
x'_2& = &x _{2k+2} +  x_{2k+3},    \nonumber  \\
\cdots & = &   \cdots   \nonumber  \\
x'_i& = &x _{2k+2} +  x_{2k+1+i},\nonumber    \\
\cdots & = & \cdots   \nonumber   \\
x'_{2k+1}& = &x _{2k+2} +  x_{4k+2}, \nonumber   \\
x'_{2k+2}& = &x _{2k+2} ,  \nonumber  \\
x'_{2k+3}& = &x _1 + \widehat{x_2}+ x_3 \ldots  + x_{2k+1} + x_{2k+3} + \ldots + x_{4k+3},    \nonumber   \\
\cdots & = & \cdots \nonumber   \\
x'_{i}& = &x _1 + \ldots  + x_{i-2k-2} +\widehat{x_{i-2k-1}} + x_{i-2k} +   \ldots + x_{2k+1} +x_{2k+3}  + \ldots +  x_{4k+3},\nonumber   \\
\cdots & = & \cdots  \nonumber \\
x'_{4k+2}& = &x _1 + \ldots  + x_{2k} + \widehat{x_{2k+1}} + x_{2k+3}+   \ldots + x_{4k+3} ,\nonumber  \\
x'_{4k+3}& = &x _1 + \ldots + x_{2k+1} +   x_{2k+3}  + \ldots + x_{4k+3}  . \nonumber \\
\nonumber 
\end{eqnarray} 
These coordinate transformation provide the desired equivalence. 
\end{proof}

\begin{lem}
\label{lem4k+1}
If $n=4k+1$, then 
\begin{enumerate}
\item 
the form $\alpha_{0,n}$ is equivalent to 
\[
  x_1 +(x_1+1)\left(  \alpha^{\mathrm{Cl}}_{2k,0}  (x_2, \ldots , x_{2k+1})  + \alpha^{\mathrm{Cl}}_{0,2k} (x_{2k+2}, \ldots , x_{4k+1})   \right),
  \]
  \item
the form $\alpha_{n,0}$ is equivalent to 
\[
  (x_1+1)\left(  \alpha^{\mathrm{Cl}}_{2k,0}  (x_2, \ldots , x_{2k+1})  + \alpha^{\mathrm{Cl}}_{0,2k} (x_{2k+2}, \ldots , x_{4k+1})   \right),
  \]
\item
the form $\alpha_{2k,2k+1}$ is equivalent to 
\[
  x_1 +(x_1+1)\left(  \alpha^{\mathrm{Cl}}_{2k,0}  (x_2, \ldots , x_{2k+1})  + \alpha^{\mathrm{Cl}}_{0,2k} (x_{2k+2}, \ldots , x_{4k+1})   \right) + \sum_{i=2k+2}^{4k+1}
x_i,  \]
\item
If $n=4k+1$ and $k$ is even, then $\alpha_{2k-2,2k+3}$ is equivalent to 
\[
  x_1 +(x_1+1)\left(  \alpha^{\mathrm{Cl}}_{2k,0}  (x_2, \ldots , x_{2k+1})  + \alpha^{\mathrm{Cl}}_{0,2k} (x_{2k+2}, \ldots , x_{4k+1})   \right) + \sum_{i=2k+2}^{4k+1}
x_i,   \]
  If $n=4k+1$ and $k$ is odd, then $\alpha_{2k-2,2k+3}$ is equivalent to 
\[
  x_1 +(x_1+1)\left(  \alpha^{\mathrm{Cl}}_{2k,0}  (x_2, \ldots , x_{2k+1})  + \alpha^{\mathrm{Cl}}_{0,2k} (x_{2k+2}, \ldots , x_{4k+1})   \right) + \sum_{i=2}^{2k-2} 
x_i + x_{2k+1}.   \]
  \end{enumerate}
\end{lem}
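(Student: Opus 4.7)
The plan is to mirror the strategy of Lemma~\ref{lem4k+3}: for each of the four cubic forms $\alpha_{p,q}$ listed in the statement, exhibit an explicit element $G \in \mathrm{GL}_{4k+1}(\Z_2)$ and show by direct substitution that $\alpha_{p,q}(Gx)$ equals the prescribed normal form. The shape of the target forms dictates the shape of $G$: the factor $(x_1+1)$ multiplying a sum of two Clifford quadratic forms suggests singling out one coordinate $x'_1$ (which will play the role of a splitting parameter) and partitioning the remaining $4k$ coordinates into two blocks of size $2k$, one block carrying $\alpha^{\mathrm{Cl}}_{2k,0}$ and the other $\alpha^{\mathrm{Cl}}_{0,2k}$.

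First I would treat $\alpha_{0,n}$. Guided by the transformations~(5.4) and (5.5) used in the $n=4k+3$ case, I would try an ansatz of the form
\begin{eqnarray*}
x'_1 &=& x_{2k+1}+x_{4k+1},\\
x'_i &=& x_{i-1}+x_{2k+1}, \qquad 2\leq i\leq 2k+1,\\
x'_j &=& x_1+\ldots+x_{2k}+x_{2k+2}+\ldots+\widehat{x_{6k+3-j}}+\ldots+x_{4k+1},\qquad 2k+2\leq j\leq 4k+1,
\end{eqnarray*}
(or a close variant), and substitute into $\alpha_{0,4k+1}$. Because $\alpha_n$ is permutation-invariant and its value depends only on the weight modulo $4$, this bookkeeping is substantially simplified: the only cubic terms that survive involve exactly one of the ``left'' block, one of the ``right'' block and the distinguished coordinate, which is precisely the content of $(x'_1+1)\bigl(\alpha^{\mathrm{Cl}}_{2k,0}+\alpha^{\mathrm{Cl}}_{0,2k}\bigr)$.

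Once $\alpha_{0,n}$ is handled, the case $\alpha_{n,0}$ follows by the same transformation: the difference $\alpha_{n,0}-\alpha_{0,n}=\sum_{i=1}^{n}x_i$ transforms linearly into an expression whose only effect on the normal form is to delete the leading $x_1$. For the mixed-signature cases $\alpha_{2k,2k+1}$ and $\alpha_{2k-2,2k+3}$ I would keep the same $G$ (possibly with a cosmetic relabelling) and only recompute the image of the signature correction $\sum_{i\leq p}x_i$; the resulting combination of $x'_i$'s should produce exactly the prescribed linear tails $\sum_{i=2k+2}^{4k+1}x_i$, resp.\ $\sum_{i=2}^{2k-2}x_i+x_{2k+1}$, with the split according to the parity of $k$ arising from whether $2k$ is $\equiv0$ or $\equiv 2$ modulo~$4$.

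The main technical obstacle is the bookkeeping of the quadratic and linear terms generated when one substitutes sums of many variables into $\sum_{i<j<k}x_ix_jx_k$ and $\sum_{i<j}x_ix_j$: numerous cross terms appear and must cancel against each other over $\Z_2$ up to the desired residual quadratic and linear parts. I expect that the distinction between $k$ even and $k$ odd in the fourth statement enters exactly at this stage, through a parity count of how many times certain partial sums appear in the transformed linear term. After the correct $G$ is pinned down, the verification reduces to a polynomial identity over $\Z_2$ which, while tedious, is purely mechanical and essentially identical in style to the verification implicit in Lemma~\ref{lem4k+3}.
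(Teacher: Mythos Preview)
Your approach is essentially the same as the paper's: exhibit an explicit $G\in\mathrm{GL}_{4k+1}(\Z_2)$ and verify the polynomial identity over $\Z_2$. Two small points where the paper's execution differs from your sketch are worth noting. First, the paper does not use a single ansatz but gives two distinct base transformations, one for $k$ odd and one for $k$ even (with $x'_1=x_1+x_{2k+1}$, resp.\ $x'_1=x_1+x_{4k+1}$, rather than your $x_{2k+1}+x_{4k+1}$); the parity split is built into the transformation itself, not only into the linear-tail bookkeeping. Second, for the signatures $(2k,2k+1)$ with $k$ even and $(2k-2,2k+3)$ with $k$ even, the base transformation alone does not land on the stated normal form: the paper composes it with an additional elementary change (of the type $x'_2=x_1+x_2$, resp.\ $x'_{4k+1}=x_1+x_{4k+1}$) to fix the linear part. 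So ``the same $G$ with only the linear correction recomputed'' is slightly too optimistic, but the repair is minor and your overall plan is sound.
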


\begin{proof}
If $k\geq2$ is odd, the coordinate transformation is as follows:
\begin{eqnarray}
\label{eqn4k+1odd}
x'_1& = &x _1 + x_{2k + 1},    \nonumber \\
x'_2& = &x _1 + x_2    \nonumber     \\
\cdots & = &   \cdots    \nonumber    \\
x'_i& = &x _1 +  x_i,   \nonumber     \\
\cdots & = &   \cdots   \nonumber    \\
x'_{2k}& = &x _{1} +  x_{2k},  \\
x'_{2k+1}& = &x _1 + \ldots +   x_{4k+1},   \nonumber    \\
x'_{2k+2}& = &x _2 + \ldots + x_{4k-1} +\widehat{x_{4k}} + x_{4k+1},     \nonumber   \\
\cdots & = & \cdots      \nonumber     \\
x'_i& = &x _2 + \ldots +  x_{6k-i+1} + \widehat{x_{6k-i+2}} +   x_{6k-i+3}+\ldots + x_{4k+1}  ,        \nonumber     \\
\cdots & = & \cdots  \nonumber     \\
x'_{4k}& = &x _2 + \ldots + x_{2k+1} +   \widehat{x_{2k+2}} + x_{2k+3}  + \ldots + x_{4k+1} ,  \nonumber      \\
x'_{4k+1}& = &x _2 +  \ldots + x_{4k}  .    \nonumber   
\end{eqnarray}
If $k\geq2$ is even, the coordinate transformation is as follows:
\begin{eqnarray}
\label{eqn4k+1even}
x'_1& = &x _1 + x_{4k + 1}, \nonumber\\
\cdots & = &   \cdots    \nonumber   \\
x'_i& = &x _1 +  x_{4k +2 - i},    \nonumber  \\
\cdots & = &   \cdots   \nonumber  \\
x'_{2k}& = &x _{1} +  x_{2k+2},  \nonumber    \\
x'_{2k+1}& = &x _1 + \ldots +   x_{4k+1},  \\
x'_{2k+2}& =&  \widehat{x_2}   +x_3+  \ldots + x_{4k+1},   \nonumber  \\
\cdots & = & \cdots   \nonumber  \\
x'_i& = &x _2 + \ldots  + x_{i- 2k-1} + \widehat{x_{i -2k}}+ x_{i -2k+1} + \ldots + x_{4k+1},      \nonumber   \\
\cdots & = & \cdots    \nonumber  \\
x'_{4k+1}& = &x _2 + \ldots + x_{2k} +   \widehat{x_{2k+1}} + x_{2k+2}  + \ldots + x_{4k+1}.  \nonumber
\end{eqnarray}

The above transformations provide the equivalence in the cases where
the signature is $(0,n)$, $(n,0)$ and $(p,q)$ with $k$ odd. 

Furthermore,
in the case of signature $(2k,2k+1)$ with even $k$
(which is equivalent to the case of signature $(2k+1,2k)$),
the transformation (\ref{eqn4k+1even}) 
has to be combined with the following one
\[\begin{array}{ccl}
x'_1& = &x_1,  \\
x'_2& = &x _2 +  x_1,  \\
x'_i& = &x _{i}, \hspace{1.5cm} \mbox{if }   \hspace{0.2cm} 3 \leq  i \leq 4k+1 .
\end{array} 
\]

Finally, 
if $k$ is even and in the case of signature $(2k-2,2k+3)$,
the transformation (\ref{eqn4k+1odd})
has to be combined with the following change of coordinates 
\[\begin{array}{ccl}
x'_i& = &x _{i}, \hspace{1.5cm} \mbox{if }   \hspace{0.2cm} 1 \leq  i \leq 4k,\\
x'_{4k+1}& = &x _{4k+1} +  x_1.  
\end{array} 
\]
The results follow directly form these coordinate transformations. 
\end{proof}

The changes of coordinates (\ref{eqn4k+1odd}) and (\ref{eqn4k+1even}) 
used in the proof of the lemma \ref{lem4k+1}, can be used for the case $p+q = 4k+2=n$ 
where the last coordinate $x_{n}$ remains unchanged.

\subsection{The end of the proof}
\label{conclusion}

Consider some more properties on the quadratic form of the Clifford algebras. 
Denote, as above, by $\alpha^{\mathrm{Cl}}_{0,2}$ the generating quadratic form of the Clifford algebra 
$\mathrm{Cl}_{0,2}$ and by $\alpha^{\mathrm{Cl}}_{2,0}$ the generating quadratic form of the Clifford algebra $\mathrm{Cl}_{2,0}$. Denote also
\[
(\alpha_{2,0}^{\mathrm{Cl}})^{l} (x_1, \ldots , x_{2l}) := \alpha_{2,0}^{\mathrm{Cl}}(x_1,x_2)+ \ldots +  \alpha_{2,0}^{\mathrm{Cl}}(x_{2l-1},x_{2l}),
\]
\[
(\alpha_{0,2}^{\mathrm{Cl}})^{l} (x_1, \ldots , x_{2l}) := \alpha_{0,2}^{\mathrm{Cl}}(x_1,x_2)+ \ldots +  \alpha_{0,2}^{\mathrm{Cl}}(x_{2l-1},x_{2l}).
\]

The following lemma is useful in the Clifford case.
\begin{lem}
\label{CliffordPeriodicity}
If $k$ is even, then
\begin{eqnarray}
\label{Cliffordeven}
\alpha^{\mathrm{Cl}}_{2k,0} \simeq \alpha^{\mathrm{Cl}}_{0,2k}   \simeq( \alpha^{\mathrm{Cl}}_{0,2}  )^{k/2} + ( \alpha^{\mathrm{Cl}}_{2,0} )^{k/2} \nonumber.
\end{eqnarray}
If $k$ is odd, then 
\begin{eqnarray}
\label{Cliffordodd}
\alpha^{\mathrm{Cl}}_{2k,0} \simeq( \alpha^{\mathrm{Cl}}_{2,0}  )^{\frac{k+1}{2}} + ( \alpha^{\mathrm{Cl}}_{0,2}  )^{\frac{k-1}{2}} , \hspace{0.7cm} \alpha^{\mathrm{Cl}}_{0,2k} \simeq( \alpha^{\mathrm{Cl}}_{2,0}  )^{\frac{k-1}{2}} + ( \alpha^{\mathrm{Cl}}_{0,2}  )^{\frac{k+1}{2}} \nonumber .
\end{eqnarray}
\end{lem}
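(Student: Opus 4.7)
The strategy is to iterate the two Clifford periodicities
\[
\mathrm{Cl}_{p+2,q} \simeq \mathrm{Cl}_{2,0}\otimes_{\bbR}\mathrm{Cl}_{q,p}, \qquad \mathrm{Cl}_{p,q+2} \simeq \mathrm{Cl}_{q,p}\otimes_{\bbR}\mathrm{Cl}_{0,2},
\]
whose proofs by explicit coordinate changes were given in Section~\ref{How to use the generating function}. The key preliminary observation is that if two $\bbZ_2^n$-graded twisted group algebras $A,B$ admit generating functions $\alpha_A,\alpha_B$, then $A\otimes_\bbR B$ is a $\bbZ_2^{n+m}$-graded twisted group algebra with generating function $\alpha_A(x_1,\ldots,x_n)+\alpha_B(x_{n+1},\ldots,x_{n+m})$ on disjoint sets of variables. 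Indeed the twisting of the tensor product is $f_{A\otimes B}\bigl((x,y),(x',y')\bigr)=f_A(x,x')+f_B(y,y')$, so its generating function is $\alpha_A(x)+\alpha_B(y)$, unique by the properties recalled in the previous section. Consequently the two periodicities above translate into form-level equivalences
\begin{align*}
\alpha^{\mathrm{Cl}}_{p+2,q}&\simeq \alpha^{\mathrm{Cl}}_{2,0}(y_1,y_2)+\alpha^{\mathrm{Cl}}_{q,p}(y_3,\ldots,y_{n+2}),\\
\alpha^{\mathrm{Cl}}_{p,q+2}&\simeq \alpha^{\mathrm{Cl}}_{q,p}(y_1,\ldots,y_n)+\alpha^{\mathrm{Cl}}_{0,2}(y_{n+1},y_{n+2}).
\end{align*}

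Applying the first periodicity to $\alpha^{\mathrm{Cl}}_{2k,0}$ (with $p=2k-2$, $q=0$) and then the second to the resulting $\alpha^{\mathrm{Cl}}_{0,2k-2}$ factor (with $p=0$, $q=2k-4$), one obtains on disjoint blocks of variables the two-step reduction
\[
\alpha^{\mathrm{Cl}}_{2k,0} \;\simeq\; \alpha^{\mathrm{Cl}}_{2,0}+\alpha^{\mathrm{Cl}}_{0,2k-2} \;\simeq\; \alpha^{\mathrm{Cl}}_{2,0}+\alpha^{\mathrm{Cl}}_{2k-4,0}+\alpha^{\mathrm{Cl}}_{0,2}.
\]
Iterating this reduction $j$ times yields the telescoping identity
\[
\alpha^{\mathrm{Cl}}_{2k,0}\;\simeq\;(\alpha^{\mathrm{Cl}}_{2,0})^{j}+\alpha^{\mathrm{Cl}}_{2k-4j,0}+(\alpha^{\mathrm{Cl}}_{0,2})^{j}.
\]

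For $k$ even I take $j=k/2$: the residual form $\alpha^{\mathrm{Cl}}_{0,0}$ is trivial, and the even-$k$ formula follows. For $k$ odd I take $j=(k-1)/2$: the residual $\alpha^{\mathrm{Cl}}_{2,0}$ contributes an extra hyperbolic summand, giving the odd-$k$ formula for $\alpha^{\mathrm{Cl}}_{2k,0}$. The form $\alpha^{\mathrm{Cl}}_{0,2k}$ is handled symmetrically by starting from the second periodicity instead of the first; for $k$ even the iteration produces exactly the same decomposition (this is where the non-trivial equivalence $\alpha^{\mathrm{Cl}}_{2k,0}\simeq\alpha^{\mathrm{Cl}}_{0,2k}$ comes from), while for $k$ odd the roles of $\alpha^{\mathrm{Cl}}_{2,0}$ and $\alpha^{\mathrm{Cl}}_{0,2}$ are exchanged. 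Nothing is conceptually difficult once the periodicities of Section~\ref{How to use the generating function} are in hand; the main obstacle is purely bookkeeping --- tracking disjoint variable blocks through the iteration and verifying the small base cases $k\leq 2$ by inspection.
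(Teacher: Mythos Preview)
Your argument is correct. The paper does not actually supply a proof of this lemma; it states the result, remarks on its graphical meaning, and moves on. The label \texttt{CliffordPeriodicity} and its proximity to the coordinate transformations~(\ref{CliffordPeriodicityTransfo}) of Section~\ref{How to use the generating function} make clear that the intended justification is precisely the iteration of the two periodicities $\mathrm{Cl}_{p+2,q}\simeq\mathrm{Cl}_{2,0}\otimes_\bbR\mathrm{Cl}_{q,p}$ and $\mathrm{Cl}_{p,q+2}\simeq\mathrm{Cl}_{q,p}\otimes_\bbR\mathrm{Cl}_{0,2}$, which is exactly what you carry out. Your observation that the generating function of a tensor product is the sum of the generating functions on disjoint variable blocks, together with the trivial fact that a $\mathrm{GL}$-equivalence on one block extends by the identity on the others, is all that is needed to make the iteration rigorous at the level of forms. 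So your proof fills in what the paper leaves implicit, by the same route the paper points to.
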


Lemma \ref{CliffordPeriodicity} means that 
the graph of the quadratic form of a Clifford algebra with even generators, 
is equivalent to a disconnected graph consisting of components of the type 
\begin{tikzpicture}[scale=1.2]
\tikzstyle{sommetYES}=[shape=circle,fill=black,draw=black,minimum size=0.5pt, inner sep=1.5pt]
\node[draw=white, sommetYES] (X2) at (0:1) {};
\node[draw=white, sommetYES] (X1) at (0:0) {};
\draw [ultra thick] (X1) -- (X2) ;
\end{tikzpicture} 
and 
\begin{tikzpicture}[scale=1.2]
\tikzstyle{sommetNO}=[shape=circle,fill=white,draw=black,minimum size=0.5pt, inner sep=1.5pt]
\node[draw=white, sommetNO] (X2) at (0:1) {};
\node[draw=white, sommetNO] (X1) at (0:0) {};
\draw [ultra thick] (X2) -- (X1) ;
\end{tikzpicture}.

\begin{lem}
\label{lemma2,3}
The form $\tilde{\alpha}_{2,3}$ is equivalent to 
\begin{center}
\begin{tikzpicture}[scale=1.2]
\tikzstyle{sommetNO}=[shape=circle,fill=white,draw=black,minimum size=0.5pt, inner sep=1.5pt]
\fill [color=gray!40] (0,0) -- (20:1) -- (340:1) -- cycle;
\fill [color=gray!40] (0,0) -- (160:1) -- (200:1) -- cycle;
\node[draw=white, sommetNO] (X) at (20:1) {};
\node[draw=white, sommetNO] (X2) at (20:1) {};
\node[draw=white, sommetNO] (X3) at (340:1) {};
\draw [ultra thick] (0,0) -- (X2) ;
\draw [ultra thick] (0,0) -- (X3) ;
\draw [ultra thick] (X2) -- (X3) ;
\draw [ultra thin](0,0) -- (160: 1) ;
\draw  [ultra thin] (0,0) -- (200: 1) ;
\draw  [ultra thick] (160:1) -- (200: 1) ;
\draw (0,0) node {$\bullet$};
\draw (160: 1) node {$\bullet$};
\draw (200: 1) node {$\bullet$};
\draw (20: 1) node {$\bullet$};
\draw (340: 1) node {$\bullet$};
\draw (20:1) node[right] {\small{$x_5$}};
\draw (340: 1) node[right] {\small{$x_4$}};
\draw (200:1) node[left] {\small{$x_2$}};
\draw (160: 1) node[left] {\small{$x_3$}};
\draw (0:0) node[below] {\small{$x_1$}};
\end{tikzpicture} 
\end{center}
 and one has 
 \begin{eqnarray*}
\bbO_{2,3} \otimes_{\C}  \bbO_{2,3}  &\simeq &  \bbO_{1,4} \otimes_{\C}  \bbO_{0,5}  .
 \end{eqnarray*}
\end{lem}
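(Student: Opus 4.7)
The lemma consists of two claims. For the first --- an equivalence of cubic forms $\tilde{\alpha}_{2,3} \simeq \gamma$, where $\gamma$ is the form represented by the depicted graph --- the cubic and quadratic parts agree, so only the linear discrepancy $x_2 + x_3 + x_4 + x_5$ needs to be produced by a linear change of variables. I would build the required $G \in \mathrm{GL}_5(\Z_2)$ from elementary blocks. One such block is the substitution $(x_4, x_5) \mapsto (x_4 + x_5, x_4)$: a direct expansion using $x_i^2 = x_i$ over $\Z_2$ confirms that it preserves the cubic $x_1 x_2 x_3 + x_1 x_4 x_5$ and the full quadratic part, while adding exactly $x_4$ to the linear part. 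An analogous substitution on the heavy triangle produces $x_5$. Producing $x_2$ and $x_3$ is more delicate because the light triangle $\{x_1,x_2,x_3\}$ lacks the edges $x_1 x_2$ and $x_1 x_3$; a naive transformation on that triangle introduces spurious $x_1 x_i$ quadratic terms, which must be cancelled by a compensating substitution on the heavy triangle. Composing the four resulting blocks and expanding yields the desired $G$.

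For the second claim, I would work with generating functions on $\Z_2^9$. The identification $u_1 \otimes 1 = 1 \otimes u_1$, together with the commutation of the $A$- and $B$-factors across the tensor product, allows one to derive an explicit formula for $\alpha_{A \otimes_{\C} B}$ in terms of $\alpha_A$ and $\alpha_B$: the square of a basis element can be computed by moving the shared $u_1$ through each factor, the only nontrivial contribution being $(-1)^{1 + \deg A + \deg B}$ when the shared coordinate is nonzero. Applying this formula with $\alpha_A = \alpha_B = \tilde{\alpha}_{2,3}$ yields the generating function of the left-hand side, and with $\alpha_A = \tilde{\alpha}_{1,4}$, $\alpha_B = \tilde{\alpha}_{0,5}$ that of the right-hand side. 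A direct comparison shows that the two generating functions share the same cubic and quadratic parts, differing only by a linear expression. Applying the equivalence $\tilde{\alpha}_{2,3} \simeq \gamma$ from the first part to each factor on the left-hand side, and then using an extension of the linear-shifting blocks to the nine-variable setting, eliminates the remaining linear difference and establishes the isomorphism.

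The main obstacle is the first part. The asymmetry between the heavy and light triangles prevents simple symmetry-based arguments, and the compensating substitution needed to produce the linear terms $x_2$ and $x_3$ while preserving the quadratic part has to be found by direct inspection; careful bookkeeping is then required to verify that the cubic and quadratic parts are genuinely preserved. Once this is in hand, the second part reduces to a routine computation with the tensor-product generating-function formula and the elementary linear-shifting substitutions.
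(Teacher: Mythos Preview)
Your overall strategy --- find an explicit $G\in\mathrm{GL}_5(\Z_2)$ and verify --- is correct, but the block-by-block construction you outline for Part 1 does not quite work as stated. A ``compensating substitution on the heavy triangle,'' i.e.\ one touching only $x_4,x_5$, cannot cancel a spurious $x_1x_2$ term: any monomial produced by such a substitution involves only $x_1,x_4,x_5$. To kill $x_1x_2$ you must mix variables across the two triangles (e.g.\ $x_4\mapsto x_4+x_2$), but then the cubic $x_1x_4x_5$ throws off new cross-cubics like $x_1x_2x_5$, and you are chasing a cascade of corrections. This can eventually be made to close up, but the bookkeeping is at least as hard as simply guessing the full transformation.

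The paper bypasses all of this by writing down a single global change of coordinates, fixing $x_1$ and acting on $(x_2,x_3,x_4,x_5)$ by the involution $x_i'=\sum_{j\ne i}x_j$:
\[
x_1'=x_1,\quad x_2'=x_3+x_4+x_5,\quad x_3'=x_2+x_4+x_5,\quad x_4'=x_2+x_3+x_5,\quad x_5'=x_2+x_3+x_4.
\]
This symmetric choice makes the verification that cubic and quadratic parts are preserved essentially automatic, and the linear shift $x_2+x_3+x_4+x_5$ drops out at once. For Part 2, the paper simply says it follows directly from Part 1; your more explicit computation of the tensor-product generating function on $\Z_2^9$ is a legitimate way to flesh that out, but note that since the transformation above fixes $x_1$, one can apply it independently to each tensor factor without disturbing the shared coordinate, and the comparison with $\tilde\alpha_{1,4}$ and $\tilde\alpha_{0,5}$ is then immediate --- no further ``linear-shifting blocks'' are needed.
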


\begin{proof}
For the first part, the coordinate transformation is given by
\begin{eqnarray}
\label{eqnO23-O32}
x'_1& = &x _1 , \nonumber\\
x'_2& = &x _3 +  x_4 + x_5,    \nonumber  \\
x'_3& = &x _2 + x_4 + x_5, \nonumber   \\
x'_4& = &x _2 + x_3 + x_5,   \nonumber   \\
x'_5& = &x _2 + x_3 + x_4.  \nonumber
\end{eqnarray}
The second part is deduced directly from the first one. 
\end{proof}

To finish the proof of the Theorem~\ref{GrThm}, we consider the four different cases. 
\begin{enumerate}

\item

The case with the signature $(0,n)$. 
Suppose that $n=4k+1$, then according to Lemma~\ref{CliffordPeriodicity},
 $\alpha_{0,n}$ is equivalent to the following form:
\[
 x_1 + (x_1+1) \left( ( \alpha_{2,0}^{\mathrm{Cl}})^{k} +  ( \alpha_{0,2}^{\mathrm{Cl}})^{k}  \right) . 
\]
If $n=4k+3$, then  $\alpha_{0,n}$  is equivalent to the following form:
\[
 x_1 + (x_1+1) \left( ( \alpha_{2,0}^{\mathrm{Cl}})^{k} +  ( \alpha_{0,2}^{\mathrm{Cl}})^{k+1}  \right) .
\]
The desired equivalence follows from
 \\
   \begin{minipage}{0.55\textwidth}
\[
x_1  + (x_1 + 1)  \left( \alpha_{2,0}^{\mathrm{Cl}} (x_2,x_3)+   \alpha_{0,2}^{\mathrm{Cl}}  (x_4,x_5)\right)  
\]
   \end{minipage}
   \begin{minipage}{0.10\textwidth}
\begin{tikzpicture}[scale=1.2]
\tikzstyle{sommetNO}=[shape=circle,fill=white,draw=black,minimum size=0.5pt, inner sep=1.5pt]
\tikzstyle{sommetYES}=[shape=circle,fill=black,draw=black,minimum size=0.5pt, inner sep=1.5pt]
\fill [color=gray!40] (0,0) -- (20:1) -- (340:1) -- cycle;
\fill [color=gray!40] (0,0) -- (160:1) -- (200:1) -- cycle;
\tikzstyle{everypath}=[color=black, line width= ultra thick]
\node[draw=white, sommetNO] (X2) at (200:1) {};
\node[draw=white, sommetNO] (X3) at (160:1) {};
\draw[ultra thin] (0:0) node [sommetYES]{} -- (200:1) node[sommetNO]{};
\draw[ultra thin] (0:0) node [sommetYES]{} -- (160:1) node[sommetNO]{};
\draw [ultra thick] (X2)  -- (X3);
\draw [ultra thick] (0,0) -- (20: 1) ;
\draw [ultra thick] (0,0) -- (340: 1) ;
\draw [ultra thick] (20:1) -- (340: 1) ;
\draw (20: 1) node {$\bullet$};
\draw (340: 1) node {$\bullet$};

\draw (180: 3) node {$\longleftrightarrow$};
\draw (20:1) node[right] {\small{$x_5$}};
\draw (340: 1) node[right] {\small{$x_4$}};
\draw (200:1) node[left] {\small{$x_2$}};
\draw (160: 1) node[left] {\small{$x_3$}};
\draw (0: 0) node[below] {\small{$x_1$}};
\draw (180: 1.8) node {$\tilde{\alpha}_{0,5}$};
\end{tikzpicture} 
   \end{minipage}

\item 
The case where the signature is $(n,0)$ directly follows from the case of signature $(0,n)$ since we have the following fact \\
   \begin{minipage}{0.55\textwidth}
\[
(x_1 + 1)  \left( \alpha_{2,0}^{\mathrm{Cl}} (x_2,x_3)+   \alpha_{0,2}^{\mathrm{Cl}}  (x_4,x_5)\right) 
\]
   \end{minipage}
   \begin{minipage}{0.15\textwidth}
\begin{tikzpicture}[scale=1.2]
\tikzstyle{sommetNO}=[shape=circle,fill=white,draw=black,minimum size=0.5pt, inner sep=1.5pt]
\tikzstyle{sommetYES}=[shape=circle,fill=black,draw=black,minimum size=0.5pt, inner sep=1.5pt]
\fill [color=gray!40] (0,0) -- (20:1) -- (340:1) -- cycle;
\fill [color=gray!40] (0,0) -- (160:1) -- (200:1) -- cycle;
\tikzstyle{everypath}=[color=black, line width= ultra thick]
\node[draw=white, sommetNO] (X2) at (200:1) {};
\node[draw=white, sommetNO] (X3) at (160:1) {};
\node[draw=white, sommetNO] (X1) at (0:0) {};

\draw[ultra thin] (X1) node [sommetNO]{} -- (200:1) node[sommetNO]{};
\draw[ultra thin] (X1) node [sommetNO]{} -- (160:1) node[sommetNO]{};
\draw [ultra thick] (X2)  -- (X3);
\draw [ultra thick] (X1) -- (20: 1) ;
\draw [ultra thick] (X1) -- (340: 1) ;
\draw [ultra thick] (20:1) -- (340: 1) ;
\draw (20: 1) node {$\bullet$};
\draw (340: 1) node {$\bullet$};

\draw (180: 3) node {$\longleftrightarrow$};
\draw (20:1) node[right] {\small{$x_5$}};
\draw (340: 1) node[right] {\small{$x_4$}};
\draw (200:1) node[left] {\small{$x_2$}};
\draw (160: 1) node[left] {\small{$x_3$}};
\draw (X1) node[below] {\small{$x_1$}};
\draw (180: 1.8) node {$\tilde{\alpha}_{5,0}$};
\end{tikzpicture} 
   \end{minipage}

\item
\label{n=4k+1(2k,2k+1)}
When the signature is $(2k,2k+1)$ with $n=4k+1$ due to 
Lemmas~\ref{lem4k+1},~\ref{CliffordPeriodicity} and~\ref{lemma2,3}, 
the form $\alpha_{2k,2k+1}$ is equivalent to the following form
\begin{eqnarray}
&&x_1  + (x_1 + 1)  \left( (\alpha_{2,0}^{\mathrm{Cl}})^{k} (x_2,\ldots, x_{2k+1})+  ( \alpha_{0,2}^{\mathrm{Cl}})^{k}  (x_{2k+2},\ldots ,x_{4k+1})\right)  + \sum_{i=2k+2}^{4k+1} x_i   \nonumber  \\
&=& (x_1 + 1)  \left( (\alpha_{2,0}^{\mathrm{Cl}})^{k-1} (x_2,\ldots, x_{2k-1})+  ( \alpha_{0,2}^{\mathrm{Cl}})^{k-1}  (x_{2k+2},\ldots ,x_{4k-1})\right)  + \sum_{i=2k+2}^{4k-1} x_i  \nonumber   \\
&& +  \tilde{\alpha}_{2,3} \nonumber (x_1, x_{2k},x_{2k+1} ,x_{4k},x_{4k+1}). 
\end{eqnarray}

The conclusion is obvious since we have the following isomorphism
\\
   \begin{minipage}{0.60\textwidth}
\[
x_1  + (x_1 + 1)  \left( \alpha_{2,0}^{\mathrm{Cl}} (x_2,x_3)+   \alpha_{0,2}^{\mathrm{Cl}}  (x_4,x_5)\right)  + x_4 + x_5
\]
   \end{minipage}
   \begin{minipage}{0.15\textwidth}
\begin{tikzpicture}[scale=1.2]
\tikzstyle{sommetNO}=[shape=circle,fill=white,draw=black,minimum size=0.5pt, inner sep=1.5pt]
\tikzstyle{sommetYES}=[shape=circle,fill=black,draw=black,minimum size=0.5pt, inner sep=1.5pt]
\fill [color=gray!40] (0,0) -- (20:1) -- (340:1) -- cycle;
\fill [color=gray!40] (0,0) -- (160:1) -- (200:1) -- cycle;
\tikzstyle{everypath}=[color=black, line width= ultra thick]
\node[draw=white, sommetNO] (X2) at (200:1) {};
\node[draw=white, sommetNO] (X3) at (160:1) {};
\node[draw=white, sommetNO] (X1) at (0:0) {};
\node[draw=white, sommetNO] (X5) at (20:1) {};
\node[draw=white, sommetNO] (X4) at (340:1) {};

\draw[ultra thin] (X1) node [sommetNO]{} -- (200:1) node[sommetNO]{};
\draw[ultra thin] (X1) node [sommetNO]{} -- (160:1) node[sommetNO]{};

\draw [ultra thick] (X2) -- (X3);
\draw [ultra thick] (X1) -- (X5) ;
\draw [ultra thick] (X1) -- (X4) ;
\draw [ultra thick] (X5) -- (X4) ;

\draw (0: 0) node {$\bullet$};

\draw (180: 2.8) node {$\longleftrightarrow$};
\draw (20:1) node[right] {\small{$x_5$}};
\draw (340: 1) node[right] {\small{$x_4$}};
\draw (200:1) node[left] {\small{$x_2$}};
\draw (160: 1) node[left] {\small{$x_3$}};
\draw (X1) node[below] {\small{$x_1$}};
\draw (180: 1.8) node {$\tilde{\alpha}_{2,3}$};
\end{tikzpicture} 

   \end{minipage}

When the signature is $(2k+1,2k+2)$ with $n=4k+3$, also due to 
Lemmas~\ref{lem4k+3},~\ref{CliffordPeriodicity} and~\ref{lemma2,3}, 
the form $\alpha_{2k+1,2k+2}$ is equivalent to the following form
\begin{eqnarray}
&&
x_1  + (x_1 + 1)  \left( (\alpha_{2,0}^{\mathrm{Cl}})^{k} (x_2,\ldots, x_{2k+1})+  ( \alpha_{0,2}^{\mathrm{Cl}})^{k+1}  (x_{2k+2},\ldots ,x_{4k+3})\right)  + \sum_{i=2k+2}^{4k+3} x_i
  \nonumber  \\
&=& (x_1 + 1)  \left( (\alpha_{2,0}^{\mathrm{Cl}})^{k} (x_2,\ldots, x_{2k+1})+  ( \alpha_{0,2}^{\mathrm{Cl}})^{k}  (x_{2k+2},\ldots ,x_{4k+1})\right)  + \sum_{i=2k+2}^{4k+1} x_i  \nonumber   \\
&& +  \tilde{\alpha}_{1,2} \nonumber (x_1,x_{4k+2},x_{4k+3}).
\end{eqnarray}

\item When the signature is $(2k,2k+3)$ with $n=4k+3$, the form $\alpha_{2k,2k+3}$ is equivalent to the following form
\begin{eqnarray}
&&x_1  + (x_1 + 1)  \left( (\alpha_{2,0}^{\mathrm{Cl}})^{k} (x_2,\ldots, x_{2k+1})+  ( \alpha_{0,2}^{\mathrm{Cl}})^{k+1}  (x_{2k+2},\ldots ,x_{4k+3})\right)  + \sum_{i=2k+2}^{4k+1} x_i \nonumber  \\
&=&(x_1 + 1)  \left( (\alpha_{2,0}^{\mathrm{Cl}})^{k-1} (x_2,\ldots, x_{2k-1})+  ( \alpha_{0,2}^{\mathrm{Cl}})^{k-1}  (x_{2k+2},\ldots ,x_{4k-1})\right)  + \sum_{i=2k+2}^{4k-1} x_i  \nonumber   \\
&& +  \tilde{\alpha}_{2,5} \nonumber (x_1, x_{2k},x_{2k+1} ,x_{4k},x_{4k+1} ,x_{4k+2},x_{4k+3}).
\end{eqnarray}
When the signature is $(2k-1,2k+2)$ with $n=4k+1$, the form $\alpha_{2k-1,2k+2}$ is equivalent to $\alpha_{2k-2,2k+3}$ which is equivalent to the following form
\begin{eqnarray}
&&x_1  + (x_1 + 1)  \left( (\alpha_{2,0}^{\mathrm{Cl}})^{k} (x_2,\ldots, x_{2k+1})+  ( \alpha_{0,2}^{\mathrm{Cl}})^{k+1}  (x_{2k+2},\ldots ,x_{4k+1})\right)  + 
\sum_{i=2k+2}^{4k-1} x_i \nonumber  \\
&=&(x_1 + 1)  \left( (\alpha_{2,0}^{\mathrm{Cl}})^{k-2} (x_2,\ldots, x_{2k-3})+  ( \alpha_{0,2}^{\mathrm{Cl}})^{k-2}  (x_{2k+2},\ldots ,x_{4k-3})\right)  + \sum_{i=2k+2}^{4k-3} x_i  \nonumber   \\
&& +  \tilde{\alpha}_{3,6} \nonumber (x_1, x_{2k-2},x_{2k-1},x_{2k},x_{2k+1} ,x_{4k-2},x_{4k-1}, x_{4k},x_{4k+1}).
\end{eqnarray}
\end{enumerate}

Theorem~\ref{GrThm} is proved. 

\begin{lem}
The following isomorphism holds for any $p,q >0$,

$$ \bbO_{p,q} \otimes_{\bbC}  \bbO_{2,3} \simeq  \bbO_{p,q} \otimes_{\bbR^2}  \bbO_{3,2}. $$

\end{lem}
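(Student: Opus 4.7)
\medskip

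\noindent\textbf{Proof proposal.} The plan is to work entirely in the language of generating cubic forms, using the characterization that two twisted group algebras over $\bbZ_2^n$ with equivalent generating functions are isomorphic as graded algebras (Proposition in Section 4.4).

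\medskip

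\noindent\emph{Step 1 (setting up the generating forms).} First I will write down the generating cubic form of each tensor product as a polynomial on $\bbZ_2^{p+q+4}$. The tensor product over the chosen $2$-dimensional subalgebra amounts to amalgamating the two generating functions along the common variable $x_1$ (the degree variable of the identified generator $u_1$): working out the twisted product on $u_1$ once rather than twice shows that for $\mathcal{A} \in \{\bbC, \bbR^2\}$ the generating function of $\bbO_{p,q} \otimes_{\mathcal{A}} \bbO_{?,?}$ is $\alpha_{p,q}(x_1,\ldots,x_n) + \alpha_{?,?}(x_1,y_2,y_3,y_4,y_5) + \varepsilon\, x_1$, where $\varepsilon = \alpha_{p,q}(e_1) = \alpha_{?,?}(e_1)$ corrects the double-count of the $x_1$ linear term. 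Here the labelling of generators inside $\bbO_{p,q}$ is chosen so that $u_1^2 = -1$ in the $\bbC$-case and $u_1^2 = +1$ in the $\bbR^2$-case; both are possible because $p,q > 0$.

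\medskip

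\noindent\emph{Step 2 (generic case).} For $(p,q) \notin \{(1,4),(4,1)\}$, Theorem~\ref{thmreal} already asserts
$$\bbO_{p,q} \otimes_{\bbC} \bbO_{2,3} \simeq \bbO_{p+2,q+2} \simeq \bbO_{p,q} \otimes_{\bbR^2} \bbO_{3,2},$$
so the claim follows by transitivity of graded isomorphism.

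\medskip

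\noindent\emph{Step 3 (exceptional cases).} For $(p,q) \in \{(1,4),(4,1)\}$, the isomorphism $\bbO_{1,4} \simeq \bbO_{4,1}$ from Proposition~\ref{thmIso}(i) reduces the two cases to one, say $(1,4)$. Here both generating forms live on $\bbZ_2^9$, and I will verify equivalence directly. The idea is to first use coordinate transformations acting only on $(y_2,y_3,y_4,y_5)$ (leaving $\alpha_{p,q}$ untouched) to bring both $\alpha_{2,3}(x_1,y_2,\ldots,y_5)$ and $\alpha_{3,2}(x_1,y_2,\ldots,y_5)$ into the common normal form $\tilde\alpha_{2,3}$ of Lemma~\ref{lemma2,3}; the transformation (\ref{eqnO23-O32}) from the proof of that lemma does exactly this in the $\bbC$-case, and an analogous one works in the $\bbR^2$-case. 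What remains is to absorb the residual discrepancy in the linear part at $x_1$ by a coordinate change on the $x$-variables alone, combined with a small change on $y$-variables. Explicitly, one seeks $G \in \mathrm{GL}_9(\bbZ_2)$ that maps $\alpha^{(1)}$ to $\alpha^{(2)}$, guided by the coordinate transformations (\ref{eqn4k+1odd})--(\ref{eqn4k+1even}) used in the proof of Lemma~\ref{lem4k+1}, adapted to the small dimension.

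\medskip

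\noindent\emph{The main obstacle} is precisely Step 3: the cases $(1,4)$ and $(4,1)$ are called exceptional because the generic coordinate changes of Section~\ref{conclusion} have indices falling out of range when $k=1$, so the transformation must be exhibited ad hoc on $\bbZ_2^9$. However, since both tensor products have the same signature and dimension and, after Step 1, differ only by a single linear term $x_1$ and the internal presentation of the $\bbO_{2,3}$-block, the existence of an intertwining $G$ is essentially forced; the challenge is purely computational.
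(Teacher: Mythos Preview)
Your Step~2 is circular. The lemma you are trying to prove is precisely what the paper uses to establish the last isomorphism in Theorem~\ref{thmreal}; see the sentence ``This provides the last isomorphism of Theorem~\ref{thmreal}'' immediately after the lemma. Theorem~\ref{GrThm} together with the recursive definition of $\tilde\alpha_{p+2,q+2}$ gives only $\bbO_{p+2,q+2}\simeq\bbO_{p,q}\otimes_{\bbC}\bbO_{2,3}$; the $\bbR^2$-version is not available until the present lemma is proved. So you cannot invoke Theorem~\ref{thmreal} here.

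Moreover, your Step~3 is not actually a proof: you assert that the required $G\in\mathrm{GL}_9(\bbZ_2)$ ``is essentially forced'' but never exhibit it, and the transformations you point to in Lemma~\ref{lem4k+1} are for the wrong problem (they normalize $\alpha_{p,q}$, not the tensor-product block).

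The paper's argument is far more direct and avoids all case analysis. It observes that the two generating forms differ only in the five-variable block attached to $x_1$: one side is $\alpha_{p,q}(x_1,\ldots,x_n)+\alpha_{2,3}(x_{n+1},x_{n+2},x_1,x_{n+3},x_{n+4})$, the other is $\alpha_{p,q}(x_1,\ldots,x_n)+\alpha_{3,2}(x_1,x_{n+1},x_{n+2},x_{n+3},x_{n+4})$. A single coordinate change acting only on $(x_{n+1},x_{n+2},x_{n+3},x_{n+4})$ (and using $x_1$), namely
\[
x'_{n+1}=x_{n+4},\quad x'_{n+2}=x_1+x_{n+1}+x_{n+2}+x_{n+4},\quad x'_{n+3}=x_1+x_{n+1}+x_{n+3}+x_{n+4},\quad x'_{n+4}=x_{n+1},
\]
with $x'_i=x_i$ for $1\le i\le n$, carries one form to the other. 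Since $x_1,\ldots,x_n$ are fixed, $\alpha_{p,q}$ is untouched and the argument works uniformly for every $p,q>0$, including $(1,4)$ and $(4,1)$.
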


\begin{proof}
The cubic form  
$\alpha_{p,q}(x_1,x_2,  \ldots , x_n)  +\alpha_{2,3} (x_{n+1},x_{n+2},x_{1},x_{n+3},x_{n+4}) $, where the linear term $x_1$ appears in the form,
is equivalent to 
$\alpha_{p,q}(x_1, \ldots , x_n)  +\alpha_{3,2} (x_{1},x_{n+1},x_{n+2},x_{n+3},x_{n+4}) $ where the linear $x_1$ is absent of the form. 
This is expressed 
by the following change of coordinates
\[
\begin{array}{ccl}
x'_1& = &x _1, \\
x'_2& = &x _5,  \\
x'_3& = &x _1 + x_2  + x_3+ x_5,  \\
x'_4& = &x _1 + x_2+ x_4 + x_5, \\
x'_5& = &x_2.
\end{array}
\]
This coordinate transformation does not change the coordinates of the cubic form~$\alpha_{p,q}$.  
\end{proof}

This provides the last isomorphism of Theorem \ref{thmreal}.

\medskip

\noindent \textbf{Acknowledgments}.
The author is grateful to Valentin Ovsienko and Sophie Morier-Genoud for stimulating discussions. She also thanks the Institute Camille Jordan, CNRS, Universit\'e Lyon 1  for hospitality.
This research was partially supported by the Interuniversity Attraction Poles Programme initiated by the Belgian Science Policy Office. 
\vskip 1cm


\end{document}